\documentclass{amsart}

\usepackage{mathtools}
\usepackage{mathabx} 

\usepackage{mathrsfs,bbm,color}   
\usepackage[scr=boondox]{mathalfa}          

\usepackage{graphicx}
\usepackage{amssymb,enumerate}

\usepackage{thmtools}

\usepackage[colorlinks=true,backref=page,bookmarksopen=false]{hyperref} 

\usepackage{xcolor}
\hypersetup{
    colorlinks,
    linkcolor={red!50!black},
    citecolor={blue!50!black},
    urlcolor={blue!80!black}
}

\renewcommand*{\backref}[1]{}

\renewcommand*{\backrefalt}[4]{\ \tiny 
  \ifcase #1 ({\color{red} \bf NOT CITED.})%
  \or    ($\uparrow$#2)%
  \else   ($\uparrow$#2)%
  \fi}

\declaretheorem[numberwithin=section]{theorem} 
\declaretheorem[sibling=theorem]{corollary} 
\declaretheorem[sibling=theorem]{proposition} 
\declaretheorem[sibling=theorem]{lemma}

\declaretheorem[sibling=theorem]{question}
\declaretheorem[sibling=theorem, style=definition]{definition}
\declaretheorem[sibling=theorem, style=remark]{remark}

\newcommand{\threebar}[1]{{\left\vert\kern-0.25ex\left\vert\kern-0.25ex\left\vert #1 
    \right\vert\kern-0.25ex\right\vert\kern-0.25ex\right\vert}}

\DeclareMathOperator{\Int}{int}

\newcommand{\R}{\mathbb{R}}
\newcommand{\C}{\mathbb{C}}
\newcommand{\A}{\mathsf{A}}
\newcommand{\B}{\mathsf{B}}
\newcommand{\RP}{\mathbb{R}\mathrm{P}}

\newcommand{\CP}{\mathbb{C}\mathrm{P}}

\newcommand{\N}{\mathbb{N}}
\newcommand{\Z}{\mathbb{Z}}



\renewcommand{\setminus}{\smallsetminus}
\renewcommand{\emptyset}{\varnothing}


\title[Poincar\'e-Bendixson theorem for Bebutov shifts]{A Poincar\'e-Bendixson theorem for Bebutov shifts and applications to switched systems}
\author{Jairo Bochi}
\address{Department of Mathematics, The Pennsylvania State University}
\email{\href{mailto:bochi@psu.edu}{bochi@psu.edu}}
\author{Ian D.~Morris}
\address{School of Mathematical Sciences, Queen Mary University of London}
\email{\href{i.morris@qmul.ac.uk}{i.morris@qmul.ac.uk}}
\date{January 9, 2026}

\subjclass[2020]{primary: 37B55, 37C27, 37E35, 93C30, 93D20; secondary: 34A38, 34A60}

\begin{document}

\begin{abstract}
We prove a version of the Poincar\'e-Bendixson theorem for certain classes of curves on the $2$-sphere which are not required to be the trajectories of an underlying flow or semiflow on the sphere itself.  Using this result we extend the Poincar\'e-Bendixson theorem to the context of continuous semiflows on compact subsets of the $2$-sphere and the projective plane, give new sufficient conditions for the existence of periodic trajectories of certain low-dimensional affine control systems, and give a new criterion for the global uniform exponential stability of switched systems of homogeneous ODEs in dimension three. We prove in particular that periodic asymptotic stability implies global uniform exponential stability for real linear switched systems of dimension three and complex linear switched systems of dimension two. In combination with a recent result of the second author, this resolves a question of R.~Shorten, F.~Wirth, O.~Mason, K.~Wulff and C.~King and resolves a natural analogue of the Lagarias-Wang finiteness conjecture in continuous time.
\end{abstract}

\maketitle

\tableofcontents

\section{Introduction and motivation}

If $\Psi_1,\ldots,\Psi_N$ are continuous vector fields on $\R^d$, we define a trajectory of the \emph{switched system} of differential equations defined by $(\Psi_1,\ldots,\Psi_N)$ to be any absolutely continuous function $x \colon [0,\infty) \to \R^d$ which satisfies a non-autonomous Carath\'eodory differential equation of the form
\begin{equation}\label{eq:caraway}\dot{x}(t)=\sum_{i=1}^N u_i(t)\Psi_i(x(t))\end{equation}
for Lebesgue a.e.\ $t \geq 0$, where each $u_i$ is a Lebesgue measurable function $[0,\infty) \to [0,1]$ and where $\sum_{i=1}^N u_i=1$ Lebesgue almost everywhere. Switched systems of differential equations are a topic of classical and continuing interest in the mathematical theory of control: see for example \cite{ChMaSi25,Li03,LiAn09,ShWiMaWuKi07,SuGe11}. 

In this article the functions $\Psi_i$ will usually be homogeneous or linear, and in this context we will be concerned in particular with the stability of the origin. We recall some standard definitions from this context. We will say that the switched system defined by $(\Psi_1,\ldots,\Psi_N)$ is \emph{globally uniformly exponentially stable} if there exist $C,\kappa>0$ such that $\|x(t)\|\leq Ce^{-\kappa t}\|x(0)\|$ for all $t \geq 0$ and for every trajectory~$x$; \emph{Lyapunov stable} if there exists $C>0$ such that $\|x(t)\|\leq C\|x(0)\|$ for all $t \geq 0$ and for every trajectory $x$; and \emph{periodically asymptotically stable} if $\lim_{t\to\infty}x(t)=0$ for all trajectories such that the underlying switching law $(u_1,\ldots,u_N)$ in \eqref{eq:caraway} is constant or periodic (i.e. such that for some $p>0$ we have $u_i(t+p)=u_i(t)$ a.e.\ for every $i=1,\ldots,N$). We will be especially interested in the case where each $\Psi_i$ is given by a linear map $A_i \colon \R^d \to\R^d$.  The core problem which motivates this article is the following question:
\begin{question}[{\cite[\S4.10]{ShWiMaWuKi07}}, see also \cite{Gu03}]\label{qu:siamreview}
Let $A_1,\ldots,A_N \colon \R^d \to \R^d$ be linear. If the switched system defined by $(A_1,\ldots,A_N)$ is periodically asymptotically stable, is it also globally uniformly exponentially stable?
\end{question}
This question has a well-established analogue in discrete time as follows. Let $M_d(\R)$ denote the vector space of all $d\times d$ real matrices. Given a tuple $\A=(A_1,\ldots,A_N) \in M_d(\R)^N$ let us say that a trajectory of the \emph{discrete-time switched system} defined by $\A$ is any sequence of vectors $(x_n)_{n=1}^\infty$ which satisfies $x_{n+1}\equiv A_{i_n}x_n$ for some sequence $(i_n)_{n=1}^\infty \in \{1,\ldots,N\}^\N$. In a similar manner we call this discrete-time switched system globally uniformly exponentially stable if there exist $C,\kappa>0$ such that $\|x_n\|\leq Ce^{-\kappa n}\|x_1\|$ for all $n \geq 1$ and for all trajectories $(x_n)$, and periodically asymptotically stable if $\lim_{n \to \infty} x_n=0$ for every trajectory $(x_n)$ which is generated by a periodic switching law  $(i_n)$. L.~Gurvits asked in \cite{Gu95} whether every periodically asymptotically stable discrete-time linear switched system is globally uniformly exponentially stable; this question admits the following alternative formulation. Given $\A=(A_1,\ldots,A_N) \in M_d(\R)^N$, define the \emph{joint spectral radius} to be the quantity
\begin{equation}\label{eq:JSR}\varrho(\A)\coloneq \lim_{n\to \infty}\max_{i_1,\ldots,i_n} \left\|A_{i_n}\cdots A_{i_1}\right\|^{\frac{1}{n}}.\end{equation}
One may show (see for example \cite{BeWa92,Ju09}) that the joint spectral radius is equal to the supremum
\begin{equation}\label{eq:gusup}\sup_{(i_n)\text{ periodic}} \lim_{n \to \infty} \left\|A_{i_n}\cdots A_{i_1}\right\|^{\frac{1}{n}}.\end{equation}
If this supremum fails to be a maximum for some $\A$ such that $\varrho(\A)>0$, then the same holds for $\hat{\A}\coloneq \varrho(\A)^{-1}\A$ which additionally satisfies $\varrho(\hat{\A})=1$. This system cannot be globally uniformly exponentially stable since then we would have $\varrho(\hat{\A})<1$, but it is periodically asymptotically stable due to the non-attainment of the supremum. Thus Gurvits' question has a positive answer if and only if the supremum \eqref{eq:gusup} is always attained. That this supremum is always attained was precisely the \emph{finiteness conjecture} made independently by J.C.~Lagarias and Y.~Wang in \cite{LaWa95}. The finiteness conjecture has a trivial positive answer in dimension one, but was shown by Bousch and Mairesse in 2002 to have a negative answer in all higher dimensions: see \cite{BoMa02} and subsequently \cite{BlThVl03,HaMoSiTh11}. 

Similarly to the finiteness conjecture, the answer to Question~\ref{qu:siamreview} varies with the dimension $d$. If $d=1$ then each $1\times1$ matrix $A_i$ may be identified with a real number $a_i$. Under periodic asymptotic stability it is trivial that trajectories with constant switching law $(u_1,\ldots,u_N)$ must converge to zero, so every $a_i$ must be negative, and global uniform exponential stability follows easily with $C=1$ and with $\kappa = \min_i |a_i|$. In higher-dimensional cases one proceeds as follows. It transpires that one may reduce the general $d$-dimensional problem to the special case  in which the linear maps $A_1,\ldots,A_N$ do not have a common invariant subspace other than $\{0\}$ and $\R^d$, and in which $(A_1,\ldots,A_N)$ lies at the boundary of stability in the sense that $(A_1 + \alpha I, \ldots, A_N+\alpha I)$ is globally uniformly exponentially stable for every negative real number $\alpha$ but not for any positive $\alpha$. To resolve this core case of Question~\ref{qu:siamreview} affirmatively it is enough to show that a system with this combination of properties is neither globally asymptotically stable nor periodically asymptotically stable. For such systems it is possible to show that there exists a norm $\threebar{\cdot}$ on $\R^d$, conventionally called a \emph{Barabanov norm}, such that every point in $\R^d$ is the initial point of at least one solution to \eqref{eq:caraway} along which $\threebar{x(t)}$ is constant (see \cite{Ba88b,ChMaSi25}). This immediately implies that the system is Lyapunov stable and not globally uniformly exponential stable, and to complete the argument one must construct a solution to \eqref{eq:caraway} which is generated by a periodic switching law $(u_1,\ldots,u_N)$ and which does not decay to the origin. It is clearly sufficient for the existence of such a solution that there exists a non-injective nonzero trajectory $x$, since for example if $x(0)=x(T)$ then  one may simply periodically repeat the values taken by the underlying switching law $(u_1,\ldots,u_N)$ on the interval $[0,T]$ to obtain a new trajectory $y(t)$ which coincides with $x(t)$ on $[0,T]$ and which is periodic with period $T$, and in particular does not tend to zero.

The core problem to be solved, then, is as follows: given that every point on the unit sphere of some unknown norm $\threebar{\cdot}$ is the initial condition of at least one trajectory of \eqref{eq:caraway} which remains on that unit sphere for all time, prove the existence of a non-injective trajectory which takes values only in that unit sphere. In the case $d=2$ this is rather straightforward: the unit sphere of a norm on $\R^2$ is a topological circle, and an injective trajectory on a topological circle must converge to a point, which is easily shown to be fixed by some convex combination of the linear maps $A_1,\ldots,A_N$. Thus either there exists a non-injective trajectory which performs a periodic circuit around the unit circle of a Barabanov norm, or there exists a constant trajectory; but in either case periodic asymptotic stability fails as required. This leads to a positive answer to Question~\ref{qu:siamreview} in the case $d=2$, and indeed this result has been rediscovered, reanalysed and extended on numerous occasions (see \cite{AgMo23,Bo02,MaLa03,PrMu25,PyRa96,YaXiLe12}). A corollary of this analysis --- which by contrast has been noticed only very recently  --- is that the answer to Question~\ref{qu:siamreview} is \emph{negative} in dimension $4$. To see this one begins by considering a pair of $2\times 2$ matrices $(A_1,A_2)$ which is at the boundary of stability and has a periodic trajectory but no stationary trajectories. Such a system may be shown to have the additional property that every nonzero vector is the initial value of a unique periodic trajectory, and that this trajectory is moreover generated by a unique switching law which is periodic with period $p>0$ independent of the point of origin. The trajectories of the four-dimensional switched system generated by the four matrices of the form $A_i \otimes I + \sqrt{2}\cdot (I\otimes A_j)$ are precisely given by linear combinations of tensor products $x(t)\otimes y(t\sqrt{2})$ where $x(t)$ and $y(t)$ are trajectories of the switched system $(A_1,A_2)$. If the underlying switching law is periodic with period not an integer multiple of $p$ then $x(t)$ converges to zero while $y(t\sqrt{2})$ remains bounded, but if the period \emph{is} an integer multiple of $p$ then $x(t)$ remains bounded while $y(t\sqrt{2})$ converges to zero. Thus all trajectories of this four-dimensional system which are generated by periodic switching laws tend to zero, and the system is periodically asymptotically stable. On the other hand it is easily seen that this tensor product system is not globally uniformly exponentially stable by considering a tensor product $x(t) \otimes x(t\sqrt{2})$ where $x(t)$ is periodic. A detailed version of this argument is presented in \cite{Mo25}. This reasoning immediately implies that the answer to Question~\ref{qu:siamreview} is negative in all dimensions $d \geq 4$.

This leaves the case of Question~\ref{qu:siamreview} in which the dimension is precisely three. By the classical Poincar\'e-Bendixson theorem, any trajectory of the flow defined by a uniquely integrable vector field on the $2$-sphere has either a stationary point or a periodic orbit in its $\omega$-limit set. Based on this intuition we anticipate that every trajectory of \eqref{eq:caraway} which takes values in the unit sphere of a Barabanov norm on $\R^3$ should likewise accumulate on a stationary or self-intersecting trajectory. However, this result is rather beyond the scope of the existing literature. The case of flows defined by continuous vector fields on $S^2$ which are not uniquely integrable is treated in Hartmann's classic textbook \cite{Ha64} (showing in particular that the non-uniqueness of trajectories starting at a given point, as occurs in our context, is not an inherent obstacle in proving this type of theorem) and Poincar\'e-Bendixson-type results for non-autonomous differential inclusions are presented in works such as \cite{Fi88,Fi98}. Since in general the unit sphere of a Barabanov norm is only a Lipschitzian manifold and not a differentiable one, and since the set of directions tangent to this sphere is in general not convex, the hypotheses of the results of \cite{Fi88,Fi98} are not met. On the other hand a plausible line of reasoning is lightly sketched by  E.~Pyatnitskiy and L.~Rapoport in \cite[\S{X}]{PyRa96}, drawing on their earlier result \cite[Theorem~3]{PyRa91}, as follows. Consider a recurrent trajectory $x(t)$ with values in the unit sphere of a Barabanov norm, and assume that the range of $x(t)$ lies in the $\omega$-limit set of some previously-specified trajectory. If $x(t)$ is non-injective then the construction of a periodic orbit originating somewhere along the trajectory $x(t)$ is trivial. Otherwise, the set of possible initial directions of solutions to \eqref{eq:caraway} with initial state $x(0)$ is precisely the convex hull of $\{A_1x(0),\ldots,A_Nx(0)\}$. If this set contains the origin then there exists a stationary trajectory at $x(0)$ and we are done; otherwise, by compactness and convexity this set is necessarily confined to the open half-space defined by some plane $V$ which passes through the origin. This implies that every trajectory of \eqref{eq:caraway} which begins close to $x(0)$ must move transversely to the plane $x(0)+V$, and all such trajectories must enter the same one of the two open half-spaces defined by that plane. The intersection of $x(0)+V$ with the unit sphere of the Barabanov norm thus has the properties required of a \emph{transverse section} in classical proofs of the Poincar\'e-Bendixson theorem and the argument may proceed along more-or-less standard lines, with the same mild modifications as are applied in \cite{Ha64} to accommodate the fact that in general multiple distinct trajectories might emanate from a single point. 

In this work we will take a more general and more fundamental approach to the three-dimensional case of Question~\ref{qu:siamreview} by working at the level of continuous curves on the topological space $S^2$ which are equipped with suitable notions of recurrence and switching. As well as giving a positive answer to Question~\ref{qu:siamreview} in three dimensions --- and thus completing the resolution of a natural continuous-time analogue of the Lagarias-Wang finiteness conjecture ---  this will allow us to consider periodic asymptotic stability of more general homogeneous switched systems and to establish new Poincar\'e-Bendixson-type results for semiflows which have no differentiable regularity and which are defined on subsets of the sphere or projective plane which need not be topological manifolds.  In \S\ref{se:core} below, we state and prove a Poincar\'e-Bendixson theorem for a class of Bebutov shifts which take values on $S^2$ and which admit a notion of switching between different trajectories. In \S\ref{se:app} we present various applications of this result which include, but are not limited to, the positive resolution of Question~\ref{qu:siamreview} in dimension three. This article concludes with some brief remarks on possible extensions of our arguments.

\section{A Poincar\'e-Bendixson Theorem for switched Bebutov shifts}\label{se:core}

\subsection{Principal definitions and results} 
We will consider trajectories of  switched differential equations as curves on $S^2$ in a way which is abstracted from differential equations \emph{per se} by representing them in the form of a Bebutov shift, defined as follows:

\begin{definition}\label{de:Bebutov}
Let $Z$ be a compact metrisable topological space. A \emph{$Z$-valued Bebutov shift} is defined to be a set $\mathfrak{X}$ of continuous functions from the half-line $[0, \infty)$ to $Z$ satisfying the following two properties:
\begin{enumerate}[(i)]
\item\label{it:compactness}
\emph{Compactness:} the set $\mathfrak{X}$ is compact with respect to the compact-open topology on $C([0,\infty), Z)$.
\item\label{it:shift-invariance}
\emph{Shift-invariance:} for every $\phi \in \mathfrak{X}$ and $\tau \geq 0$, the function $\sigma^\tau \phi$ defined by $(\sigma^\tau\phi)(t)\coloneq \phi(\tau+t)$ belongs to $\mathfrak{X}$.
\end{enumerate}
\end{definition}

Since $Z$ is metrisable, the compact-open topology on $C([0,\infty), Z)$ coincides with the topology of uniform convergence on compact sets (see e.g.\ \cite[p.~285]{Mu00}).

\begin{remark}
The study of this class of dynamical systems originates in the work of Bebutov and Kakutani \cite{Be40,Ka68} who considered the problem of embedding arbitrary continuous-time dynamical systems in $C(\R, [0,1])$. Bebutov shifts can be defined in much greater generality by allowing the domain $[0,\infty)$ to be replaced with another topological semigroup $S$; indeed, this wider definition includes the standard context of symbolic dynamical systems in the form of $C(\Z, \{1,\ldots,N\})$ or $C(\N, \{1,\ldots,N\})$ in its compact-open topology.  In this paper we will work exclusively with the semigroup $S = [0,\infty)$.
\end{remark}

We will simply say that $\mathfrak{X}$ is a \emph{Bebutov shift} if it is a $Z$-valued Bebutov shift for some compact metrisable topological space $Z$. 
If $\mathfrak{X}$ is a Bebutov shift then the verification of the following facts is straightforward:  $(\sigma^t)_{t \geq 0}$ defines a continuous semiflow on $\mathfrak{X}$; furthermore, fixed any metric on the space $Z$, the elements of $\mathfrak{X}$ form a uniformly equicontinuous family of functions and, in particular, each of them is uniformly continuous.

In general, a Bebutov shift with values in the $2$-sphere need not contain any periodic or constant functions: for example, the set of functions of the form
\begin{equation}
t \mapsto \left(\sin (t+\alpha) \cos (\sqrt{2}(t+\beta)),  \sin (t+\alpha) \sin (\sqrt{2}(t+\beta)), \cos (t+\alpha)\right) \in S^2
\end{equation}
for $\alpha,\beta \in \R$ is clearly a Bebutov shift with no periodic or constant elements. However, the trajectories of this system are obviously not injective, so if these curves arose as trajectories of the equation \eqref{eq:caraway} then there would be no difficulty in constructing a periodic trajectory simply by repeating the course taken by one of these functions on a minimal interval of non-injectivity. In order to allow us to perform such constructions we will therefore need to consider Bebutov shifts equipped with a notion of switching. 

\begin{definition}\label{de:switched}
A \emph{switched Bebutov shift} is defined to be a Bebutov shift $\mathfrak{X}$ which satisfies the following additional axiom:
\begin{enumerate}[(i)]
\setcounter{enumi}{2}
\item\label{it:concatenation}
\emph{Switching:} If $\phi_1, \phi_2 \in \mathfrak{X}$ satisfy $\phi_1(\tau)=\phi_2(\tau)$ for some $\tau>0$, then the function $\psi \in C([0,\infty), Z)$ defined by $\psi(t)=\phi_1(t)$ for all $t \in [0,\tau]$ and $\psi(t)=\phi_2(\tau)$ for all $t \in [\tau,\infty)$ is an element of $\mathfrak{X}$. 
\end{enumerate}
\end{definition}
We observe that if $\mathfrak{X}$ is a $Z$-valued Bebutov shift and $Z'$ is a closed subset of $Z$, then the set $\mathfrak{X}'\coloneq \{\phi \in \mathfrak{X} \colon \phi(t)\in Z'\text{ for all }t \geq 0\}$
is  a $Z'$-valued Bebutov shift (albeit perhaps an empty one) and if $\mathfrak{X}$ is a switched Bebutov shift then so is $\mathfrak{X}'$.
We will verify in \S\ref{se:app} that the trajectories of switched differential equations in compact subregions of $\R^d$ form switched Bebutov shifts, as do the trajectories of flows and semiflows on arbitrary compact metric spaces. 

The results established in this article will be consequences of the following Poincar\'e-Bendixson theorem for switched Bebutov shifts with values in the $2$-sphere or in the projective plane:

\begin{theorem}\label{th:stronger}
Let $Z$ be a compact  topological space which is homeomorphic to a subset of either $S^2$ or $\RP^2$, and let $\mathfrak{X}$ be a $Z$-valued switched Bebutov shift. If $\phi \in \mathfrak{X}$ is recurrent with respect to the shift semiflow on $\mathfrak{X}$, then there exists $\chi \in \mathfrak{X}$ which is preperiodic and satisfies $\chi(0)=\phi(0)$.  If additionally $\phi$ is injective then we may take $\chi$ constant. \end{theorem}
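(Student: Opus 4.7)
The plan is to adapt the Poincar\'e-Bendixson theorem to the purely topological framework of switched Bebutov shifts, with the switching axiom playing the role normally provided by a differential structure. I first reduce to the case $Z\subseteq S^{2}$: when $Z\subseteq \RP^2$, pass to the double cover $\pi\colon S^2\to \RP^2$ and set $\tilde{\mathfrak{X}} \coloneq \{\tilde\psi \in C([0,\infty),\pi^{-1}(Z)) : \pi\circ\tilde\psi \in \mathfrak{X}\}$, which inherits the three axioms (for switching, note that $\tilde\phi_1(\tau)=\tilde\phi_2(\tau)$ in $S^2$ forces the concatenation of the projections to lift uniquely through this common point). After passing to a subsequence of return times, and possibly replacing the chosen initial lift by its antipode, a recurrent $\phi\in\mathfrak{X}$ has a recurrent lift $\tilde\phi\in\tilde{\mathfrak{X}}$, so it suffices to prove the theorem for $\tilde{\mathfrak{X}}$ and project.

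For the non-injective case, suppose $\phi(s_1)=\phi(s_2)=z$ with $0\le s_1<s_2$ and set $p=s_2-s_1$. The key input is the existence of \emph{backward shifts} of $\phi$ inside $\mathfrak{X}$: taking $t_n\to\infty$ with $\sigma^{t_n}\phi\to\phi$, for each $k\ge 1$ choose a subsequential limit $\eta_k\in\mathfrak{X}$ of $\{\sigma^{t_n-kp}\phi\}_n$ (well-defined in $\mathfrak{X}$ for large $n$ by compactness). A direct calculation using $\sigma^{t_n}\phi\to\phi$ gives $\eta_k(t)=\phi(t-kp)$ for every $t\ge kp$, and in particular $\eta_k(s_1+kp)=\phi(s_1)=z$. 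Setting $\psi^{(0)}\coloneq\phi$ and iterating the switching axiom --- at each step switching $\psi^{(k-1)}$ with $\eta_k$ at time $s_1+kp$, where both take the value $z$ --- produces $\psi^{(k)}\in\mathfrak{X}$ which agrees with $\phi$ on $[0,s_2]$ and then traces the loop $\phi|_{[s_1,s_2]}$ exactly $k$ further times. A subsequential limit $\chi\in\mathfrak{X}$ of $\{\psi^{(k)}\}$ then satisfies $\chi(0)=\phi(0)$ and is periodic of period $p$ for $t\ge s_1$, hence preperiodic.

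For the injective case, I argue by contradiction that a constant trajectory at $\phi(0)$ must lie in $\mathfrak{X}$. If it does not, compactness of the fibre $\{\psi\in\mathfrak{X}\colon\psi(0)=\phi(0)\}$ combined with a finite-intersection-property argument yields $\epsilon_0,\delta_0,T_0>0$ such that every $\psi\in\mathfrak{X}$ with $d(\psi(0),\phi(0))<\delta_0$ must exit the $\epsilon_0$-ball about $\phi(0)$ within time $T_0$. Recurrence supplies $t_n\to\infty$ with $\phi(t_n)\to\phi(0)$, so that $\sigma^{t_n}\phi$ lies within $\delta_0$ of $\phi(0)$ infinitely often and each such return is followed by a departure on a uniform time-scale. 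A Jordan-curve argument on $S^2$ then applies: closing $\phi([0,t_n])$ by a short arc back to $\phi(0)$ produces a simple closed curve whose two complementary regions constrain the subsequent behavior of $\phi$, and the resulting pattern of forced returns and departures, read against injectivity of $\phi$, produces an exact self-intersection, a contradiction.

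The main obstacle I anticipate is the rigorous execution of the injective case. The closing arc used to form the Jordan curve is an artefact of the construction: a later crossing of the curve near $\phi(0)$ need not constitute a self-intersection of $\phi$ proper. Arranging things so that a forced crossing actually meets the arc $\phi([0,t_n])$ itself will require a careful interplay between the escape estimate, the geometry of the closing arc, and the recurrence times $t_n$; moreover, the purely topological setting (no differentiable structure, and in general many trajectories in $\mathfrak{X}$ starting at a given point) rules out the classical transverse-section technique, leaving only Jordan's theorem and compactness in $\mathfrak{X}$. The non-injective case, by comparison, is essentially combinatorial once the backward-shift elements $\eta_k$ have been constructed.
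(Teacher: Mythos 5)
Your reduction from $\RP^2$ to $S^2$ and your treatment of the non-injective case both track the paper's argument closely and are essentially correct. The backward shift elements $\eta_k$ you construct are exactly what the paper packages as its ``inverse branches'' lemma (there, $\psi_n$ with $\sigma^{(n-1)L}\psi_n=\phi$), the iterated switching is the paper's ``multiple switching'' lemma, and the subsequential-limit step at the end is handled in the paper by the closedness built into the multiple switching lemma; these are cosmetic differences.

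The injective case, however, has a genuine gap, and the strategy you propose for closing it cannot work. You aim to show that if the constant trajectory at $\phi(0)$ is absent then the Jordan-curve geometry forces $\phi$ to \emph{self-intersect}, contradicting injectivity. But injective recurrent curves on $S^2$ do exist: the paper's Remark~\ref{re:Plykin} exhibits a minimal Bebutov shift $\mathfrak{X}$ on the sphere (parametrised unstable leaves of a Plykin attractor) whose elements are \emph{all} injective and recurrent yet which contains no constant or periodic trajectory. So recurrence plus injectivity plus Jordan-curve reasoning alone can never yield the contradiction you want; the switching axiom has to enter, and it does not appear anywhere in your sketch of the injective case. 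The closing-arc difficulty you flag (``a later crossing of the curve near $\phi(0)$ need not constitute a self-intersection of $\phi$ proper'') is not a technical nuisance to be engineered around --- it is the symptom of aiming at the wrong contradiction. The paper's actual resolution is Theorem~\ref{th:core} together with Proposition~\ref{pr:three-discs-lemma}: one shows that, for each $\delta>0$, there are two arcs of $\phi$ confined to $B_\delta(\phi(0))$ travelling in \emph{opposite} directions between two fixed nearby points $x_1,x_2$. Taking limits produces genuine trajectories $\psi'$ from $x_1$ to $x_2$ and $\psi''$ from $x_2$ to $x_1$ inside $\overline{B_\delta(x_1)}$, and the switching axiom then lets one concatenate them into a trajectory oscillating between $x_1$ and $x_2$ forever; since each oscillation must take time less than $r$ this contradicts the uniform escape estimate. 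The desired contradiction is thus the reconstruction of a trajectory that never leaves a small ball --- never a self-intersection of $\phi$. The ``three discs lemma'' and the reversed-arc theorem are precisely the missing ideas, and they are the heart of the paper's proof.
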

Clearly Theorem~\ref{th:stronger} implies that such a switched Bebutov shift $\mathfrak{X}$ contains a periodic or constant element, since by the Birkhoff recurrence theorem a continuous semiflow on a compact Hausdorff space must have a minimal set and hence a recurrent point. We note the following stronger articulation of this remark:
\begin{corollary}
Let $Z$ be a compact  topological space which is homeomorphic to a subset of either $S^2$ or $\RP^2$, let $\mathfrak{X}$ be a switched Bebutov shift with values in $Z$, and let $\phi \in \mathfrak{X}$. Then there exists a periodic function $\chi \in \mathfrak{X}$ whose values are confined to the $\omega$-limit set of $\phi$,
\begin{equation}\omega(\phi)\coloneq \bigcap_{T \geq 0}\overline{\phi([T,\infty))}.\end{equation} 
\end{corollary}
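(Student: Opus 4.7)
The plan is to reduce the statement to Theorem~\ref{th:stronger} by passing to the closed subshift consisting of those trajectories which remain in $\omega(\phi)$ for all time. Concretely, I would set
$$
\mathfrak{Y} \coloneq \bigl\{\psi \in \mathfrak{X} \colon \psi(t) \in \omega(\phi) \text{ for all } t \geq 0\bigr\}.
$$
Since $\omega(\phi)$ is closed in $Z$ as an intersection of closed sets, the remark following Definition~\ref{de:switched} shows that $\mathfrak{Y}$ is itself an $\omega(\phi)$-valued switched Bebutov shift; and $\omega(\phi)$, being a closed subset of $Z$, is in turn homeomorphic to a subset of $S^2$ or $\RP^2$. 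Thus Theorem~\ref{th:stronger} will apply to $\mathfrak{Y}$ once I verify that it is nonempty and contains a recurrent point.

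For nonemptiness I would choose a sequence $t_n \to \infty$ such that $\sigma^{t_n}\phi$ converges in the compact space $\mathfrak{X}$ to some $\psi_0$, and then check that $\psi_0 \in \mathfrak{Y}$: for each fixed $s \geq 0$ one has $\psi_0(s) = \lim_n \phi(t_n+s)$, and since $t_n + s \geq T$ for all large $n$ this limit lies in $\overline{\phi([T,\infty))}$ for every $T \geq 0$, hence in $\omega(\phi)$. The shift semiflow restricts continuously to the nonempty compact metrizable space $\mathfrak{Y}$, so the Birkhoff recurrence theorem supplies a recurrent point $\psi \in \mathfrak{Y}$.

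Applying Theorem~\ref{th:stronger} to $\mathfrak{Y}$ and $\psi$ now yields a preperiodic $\chi \in \mathfrak{Y}$ with $\chi(0)=\psi(0)$. Choosing $T \geq 0$ so that $\sigma^T\chi$ is purely periodic (where $T=0$ suffices when $\chi$ is already periodic or constant), shift-invariance ensures $\sigma^T\chi \in \mathfrak{Y}$, so this periodic function takes all its values in $\omega(\phi)$ and is the desired element. I do not foresee any substantive obstacle beyond the nonemptiness check for $\mathfrak{Y}$, which is the only step that genuinely uses the specific trajectory $\phi$ rather than an arbitrary element of $\mathfrak{X}$.
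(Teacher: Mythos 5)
Your proof is correct and follows essentially the same approach as the paper: passing to the restricted switched Bebutov shift $\mathfrak{X}'=\{\psi\in\mathfrak{X}\colon\psi(t)\in\omega(\phi)\ \text{for all}\ t\geq0\}$, checking nonemptiness via accumulation points of $\sigma^{t}\phi$, and invoking Birkhoff together with Theorem~\ref{th:stronger}. You spell out two small points the paper leaves implicit (the verification that limit points of $\sigma^{t_n}\phi$ land in $\omega(\phi)$, and the shift from preperiodic to periodic), but the argument is the same.
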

\begin{proof}
The set $\mathfrak{X}'\coloneq \{\psi \in \mathfrak{X} \colon \psi(t) \in \omega(\phi)\text{ for all }t\geq 0\}$
is a switched Bebutov shift, and is nonempty since it contains all accumulation points of $\sigma^t \phi$ in the limit as $t \to \infty$. In particular by the Birkhoff recurrence theorem it contains a recurrent element and by Theorem~\ref{th:stronger} it contains a periodic element.
\end{proof}

\begin{remark}
Theorem~\ref{th:stronger} somewhat resembles certain results of V.V.~Filippov which are presented in \cite{Fi98}. Specifically, in \cite[\S16.4]{Fi98} Filippov obtains a version of the Poincar\'e-Bendixson theorem in the context of \emph{autonomous spaces}, a class of objects which roughly correspond to switched Bebutov systems $\mathfrak{X}$ whose elements have domain $\R$ and with the additional property that $\{\phi(t) \colon \phi \in \mathfrak{X}, \text{ }t \in \R\}$ is an open subset of $\R^d$. (The latter hypothesis allows homotopies to be applied to the analysis of trajectories, which is a key method in Filippov's proofs.) Filippov's results are not themselves applicable to any of the problems considered in this article since none of our applications in general satisfy the two additional hypotheses simultaneously. For example, in the context of Barabanov norms of switched linear differential equations on $\R^3$ there in general exist trajectories in the unit sphere of the Barabanov norm which cannot be extended from $[0,\infty)$ to $\R$: see \cite[Proposition 5]{ChGaMa15}. 
\end{remark}

\begin{remark}\label{re:Plykin}
The following more delicate example shows the switching property cannot be eliminated from the hypothesis of Theorem~\ref{th:stronger}, even if it is additionally assumed that all elements of $\mathfrak{X}$ are injective. There exists a smooth diffeomorphism $f \colon S^2 \to S^2$ with four hyperbolic repelling fixed points $p_1,\dots,p_4$ such that the set
$\Lambda \coloneq  \bigcap_{n \ge 0} f^n(S^2 \setminus \{p_1,\dots,p_4\})$ is a hyperbolic attractor with one-dimensional stable bundle, and therefore is laminated by unstable manifolds which are smoothly immersed copies of $\mathbb{R}$. This set $\Lambda$ is called a \emph{Plykin attractor}. Details of the construction can be found in \cite[\S17.2]{KaHa95} and \cite{Co06}; the second reference also contains illustrations. Define $\mathfrak{X}$ to be the set of all smooth unit-speed parametrised curves $\phi\colon [0,\infty) \to S^2$  whose images are contained in $\Lambda$: then $\mathfrak{X}$ is a Bebutov shift and also consists entirely of injective curves, but has no periodic trajectories; the shift semiflow on $\mathfrak{X}$ is even minimal, i.e.\ every orbit is dense. Every path component of $\Lambda$ is traversed in both directions by an element of $\mathfrak{X}$, but trajectories with opposite directions of travel may not be concatenated since the definition of $\mathfrak{X}$ does not permit trajectories to instantaneously reverse their direction of travel. Thus, $\mathfrak{X}$ does not satisfy the switching property of Definition~\ref{de:switched} and is not a switched Bebutov shift. The open set $S^2\setminus \Lambda$ has three connected components with a common topological boundary, thus forming \emph{Lakes of Wada}: see \cite{Co06}. 
\end{remark}

To prove Theorem~\ref{th:stronger} we adopt the following strategy. If $\phi$ is non-injective, then by a careful but intuitively straightforward application of the axioms of Definitions \ref{de:Bebutov} and \ref{de:switched} we may construct the desired function $\chi$ by periodically repeating the trajectory of $\phi$ along a minimal interval of non-injectivity. If the switched Bebutov shift $\mathfrak{X}$ admits trajectories which can spend arbitrarily long times in arbitrarily small neighbourhoods of $\phi(0)$ then an easy compactness argument shows that the function with constant value $\phi(0)$ belongs to $\mathfrak{X}$, and we are done.  The third and most difficult case of Theorem~\ref{th:stronger} is that in which $\phi$ is injective and there exist $\delta,T>0$ such that $\phi(t)$ cannot remain entirely in the $\delta$-neighbourhood of $\phi(0)$ for any interval of duration at least $T$. In this case we will show that there exist intervals $[t_1,t_2]$ and $[t_3,t_4]$ such that $\phi$ traverses an arc in $B_\delta(\phi(0))$ during the interval $[t_1,t_2]$, and then traces another similarly small arc in $B_\delta(\phi(0))$ during the interval $[t_3,t_4]$, with the endpoints of the second arc being arbitrarily close to those of the first arc but in reversed order. The defining property of $\delta$ implies that neither interval can be longer than $T$, so by taking limits we may find $\phi_1, \phi_2 \in \mathfrak{X}$ and points $x_1,x_2 \in Z$ such that $\phi_1$ traces an arc from $x_1$ to $x_2$ and $\phi_2$ traces an arc from $x_2$ to $x_1$, each along an interval of length not greater than $T$. By appeal to Definition~\ref{de:switched} we may deduce the existence of a trajectory in $\mathfrak{X}$ which oscillates periodically between $x_1$ and $x_2$ and is confined to the open $\delta$-ball around $\phi(0)$; but since $\delta>0$ can freely be taken arbitrarily small, it follows by taking the limit $\delta \to 0$ that the constant trajectory $\phi(0)$ is an element of $\mathfrak{X}$ after all, contradicting the supposition that we are in the third case. The construction underlying this part of this argument is captured in the following statement (illustrated in Figure~\ref{fig:reversed-arc}):
\begin{theorem}\label{th:core}
Let $\phi \colon [0,\infty) \to S^2$ be continuous and injective, and suppose that $(\sigma^t\phi)_{t \geq 0}$ accumulates on $\phi$ with respect to the compact-open topology on $C([0,\infty), S^2)$  as $t \to \infty$. Then for every $\delta>0$ the following holds: there exist distinct points $x_1, x_2 \in S^2$ such that for all $\varepsilon>0$ we may choose intervals $[t_1,t_2]$ and $[t_3,t_4]$ such that
\begin{equation}
d(\phi(t_1),x_1) < \varepsilon, \quad 
d(\phi(t_2),x_2) < \varepsilon, \quad 
d(\phi(t_3),x_2) < \varepsilon, \quad 
d(\phi(t_4),x_1) < \varepsilon,
\end{equation}
and
\begin{equation}\label{eq:delta-bound}\max_{t \in [t_1, t_2] \cup [t_3, t_4]} d(\phi(t),x_1) < \delta.\end{equation}
Furthermore we may take $x_1\coloneq \phi(T)$ for any desired value of $T\geq 0$.
\end{theorem}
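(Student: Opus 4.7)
The plan is to fix $x_1 = \phi(T)$ and argue in cases according to whether $\phi$ is eventually trapped inside $\overline{B_\delta(x_1)}$. In the trapped case, namely $\phi([T,\infty)) \subseteq \overline{B_\delta(x_1)}$, I will pick any $T' > T$ with $\phi(T') \neq x_1$ (available by injectivity of $\phi$) and set $x_2 \coloneq \phi(T')$. The hypothesis that $\sigma^t\phi$ accumulates on $\phi$ furnishes arbitrarily large times $s$ with $\phi(s)$ arbitrarily close to $x_1$, and the intervals $[t_1,t_2] = [T, T']$ and $[t_3, t_4] = [T', s]$ will satisfy all the required proximity properties. Strict boundedness by $\delta$ is obtained by a slight shrinkage of $\delta$ in case the $\omega$-limit set of $\phi$ meets $\partial B_\delta(x_1)$.

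For the escape case I will let $I_0 = (\alpha,\beta) \subset [0,\infty)$ denote the connected component of $\phi^{-1}(B_\delta(x_1))$ containing $T$, with $p_0 \coloneq \phi(\alpha)$, $q_0 \coloneq \phi(\beta) \in \partial B_\delta(x_1)$, and then use the recurrence hypothesis to fix a sequence $t_n \to \infty$ with $\sigma^{t_n}\phi \to \phi$ uniformly on $[0, L]$ for some prescribed $L > \beta$. The shifted sub-arcs $\phi|_{[t_n + \alpha, t_n + \beta]}$ are then Jordan arcs in $\overline{B_\delta(x_1)}$ whose endpoints converge to $p_0$ and $q_0$ and which are pairwise disjoint by injectivity of $\phi$, so they supply a sequence of \emph{parallel} copies of $\phi|_{I_0}$. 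Taking $x_2 \coloneq \phi(T+\eta)$ for a small $\eta > 0$ with $T+\eta \in I_0$ and $\phi(T+\eta) \neq x_1$, every parallel copy delivers the forward interval $[t_1,t_2]$ at once.

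The heart of the proof, and the main obstacle, will be producing the \emph{reversed} interval $[t_3,t_4]$ traversing $\overline{B_\delta(x_1)}$ from near $x_2$ to near $x_1$. Here I plan to invoke the two-dimensional topology of $S^2$: closing $\phi|_{I_0}$ into a Jordan curve $\Gamma$ by adjoining a suitable arc of $\partial B_\delta(x_1)$ from $p_0$ to $q_0$ splits $S^2$ into two open disks, and the goal is to derive a contradiction from the assumption that every subsequent return of $\phi$ to $B_\delta(x_1)$ has the same orientation as $\phi|_{I_0}$. The contradiction will exploit that a sequence of mutually non-crossing parallel inside arcs constrains the connecting outside arcs of $\phi$ to sit on a specific side of $\Gamma$, while recurrence forces $\phi$ to return arbitrarily close to $x_1$, a point of $\Gamma$ strictly interior to $\overline{B_\delta(x_1)}$, infinitely often. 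Tracking the resulting combinatorial arrangement of sides will then extract the reversed inside arc and hence the interval $[t_3,t_4]$. The flexibility in $T$ is immediate, since the self-accumulation hypothesis is shift-invariant in $\phi$. I expect the Jordan-curve reversal step to require careful case analysis on which arc of $\partial B_\delta(x_1)$ to adjoin, on the nesting pattern of the parallel copies inside $\overline{B_\delta(x_1)}$, and on avoiding degeneracies where an inside arc touches $\partial B_\delta(x_1)$ tangentially.
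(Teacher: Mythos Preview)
Your trapped case is essentially fine (modulo routine care with the strict $\delta$-inequality). The escape case, however, has a genuine gap at exactly the point you yourself flag: the ``Jordan-curve reversal step'' is the entire content of the theorem, and your sketch does not supply it. Two concrete problems. First, your Jordan curve $\Gamma$ is built from $\phi|_{I_0}$ together with an arc of $\partial B_\delta(x_1)$; injectivity of $\phi$ protects only the $\phi|_{I_0}$ portion, so the later \emph{outside} excursions of $\phi$ may cross the circular-arc portion of $\Gamma$ freely, and your claim that they ``sit on a specific side of $\Gamma$'' is unjustified. Second, even granting some side-tracking, the contradiction you aim for (``recurrence forces $\phi$ close to $x_1$'') does not follow: $x_1$ lies \emph{on} $\Gamma$, so both complementary discs accumulate there and nothing is violated. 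The nesting and tangency issues you anticipate are symptoms of the same missing ingredient: a clean combinatorial mechanism that turns ``two forward arcs'' into ``one reversed arc''.

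The paper supplies precisely that mechanism as a standalone lemma (the \emph{three-discs lemma}): if a Jordan curve on $S^2$ contains two disjoint sub-arcs each going from $\operatorname{Int} D_1$ to $\operatorname{Int} D_2$ while avoiding a third disc $D_3$, then it contains a sub-arc from $\operatorname{Int} D_2$ to $\operatorname{Int} D_1$ also avoiding $D_3$. This is proved by polygonal approximation and an Euler-formula argument on plane graphs, and is not a triviality. With it, the paper bypasses your trapped/escape dichotomy entirely: it takes a short initial segment $\phi|_{[0,T_3]}\subset B_\delta(x_1)$, uses a single recurrence time $\tau$ to produce a second nearby forward arc, and closes the resulting long segment of $\phi$ into a Jordan curve using a \emph{tiny} circle $\partial B_\kappa(x_1)$ with $\kappa\ll\varepsilon$ (not $\partial B_\delta$), chosen so that $\phi$ avoids $\overline{B_\kappa(x_1)}$ on the relevant interval. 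The three discs are then $\overline{B_\varepsilon(x_1)}$, $\overline{B_\varepsilon(x_2)}$, and $S^2\setminus B_\delta(x_1)$, and the lemma hands over the reversed arc directly. Your programme could perhaps be salvaged, but completing it would amount to reproving the three-discs lemma inside an ad hoc case analysis.
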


\begin{figure}
	\includegraphics{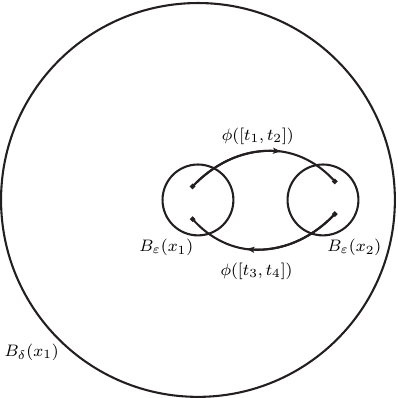}
	\caption{The two pieces of trajectory produced by Theorem~\ref{th:core}.}\label{fig:reversed-arc}
\end{figure}

\begin{remark}
It will be seen in the proof that once $\delta>0$ has been chosen we may also take $x_2\coloneq \phi(T')$ where $T'>T$ is arbitrarily close to $T$. It follows easily from recurrence that the interval $[t_3,t_4]$ may be chosen disjoint from $[t_1,t_2]$ if desired. 
\end{remark}

Theorem~\ref{th:core} is of course false if $S^2$ is replaced with more general manifolds; for example if $\phi$ is instead allowed to be a straight trajectory $\phi(t)\coloneq (t, \alpha t)$ on the torus $\R^2/\Z^2$ with irrational slope $\alpha$. 

Let $S^1$ denote the unit circle, endowed with an orientation. 
If $a$ and $b$ are two distinct points in $S^1$, the \emph{interval} $[a,b]$ denotes the image of any positively oriented simple curve on $S^1$ from $a$ to $b$.
The proof of Theorem~\ref{th:core} will ultimately rely on the following topological fact:

\begin{proposition}\label{pr:three-discs-lemma}
Let $D_1, D_2, D_3 \subset S^2$ be pairwise disjoint topological closed discs and $\psi \colon S^1 \to S^2$ be a Jordan curve. If there exist two disjoint intervals $[a_1,b_1], [a_2,b_2] \subset S^1$ satisfying $\psi(a_i) \in \Int D_1$, $\psi(b_i) \in \Int D_2$ and $\psi([a_i,b_i]) \cap D_3=\emptyset$, then there exists an interval $[c,d]\subset S^1$ satisfying $\psi(c) \in \Int D_2$, $\psi(d) \in \Int D_1$ and $\psi([c,d]) \cap D_3 = \emptyset$.
\end{proposition}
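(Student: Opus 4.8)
The plan is to argue by contradiction: suppose the Jordan curve $\psi$ crosses from $D_1$ to $D_2$ (avoiding $D_3$) along two disjoint intervals, but that there is \emph{no} interval along which $\psi$ goes from $D_2$ back to $D_1$ avoiding $D_3$. The intuitive picture is that the curve $\psi$, together with the three discs, cannot all fit on $S^2$: the two ``forward'' arcs from $D_1$ to $D_2$, being disjoint on $S^1$, separate $S^2$ in a way that forces any return passage near $D_1$ to be blocked by $D_3$, which is impossible for a single closed curve. First I would set up notation carefully. Since the two intervals $[a_1,b_1]$ and $[a_2,b_2]$ are disjoint in $S^1$, their complement in $S^1$ consists of two arcs; call the closures of the images under $\psi$ of these complementary arcs $J_1$ and $J_2$. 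Then $\psi(S^1) = \psi([a_1,b_1]) \cup \psi([a_2,b_2]) \cup J_1 \cup J_2$, and $J_1$, $J_2$ are precisely the ``return'' pieces that must be used to get from $D_2$ back to $D_1$. The hypothesis-for-contradiction says each $J_i$ meets $D_3$ (otherwise it would contain, after trimming, an interval of the desired type, once one checks that a return arc passing near $D_1$ and $D_2$ can be trimmed to have endpoints in $\Int D_1$ and $\Int D_2$; this trimming step needs a small argument using that $\psi$ is continuous and that the endpoints $a_i,b_i$ land in the open discs).

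The core topological input I would use is the Jordan curve theorem on $S^2$ applied to an auxiliary Jordan curve built from pieces of $\psi$. Specifically, let $\gamma$ be the Jordan curve obtained by concatenating the arc $\psi([a_1,b_1])$, a simple arc inside $\Int D_2$ from $\psi(b_1)$ to $\psi(b_2)$, the reversed arc $\psi([b_2,a_2]) = \psi([a_2,b_2])$ traversed backwards, and a simple arc inside $\Int D_1$ from $\psi(a_2)$ back to $\psi(a_1)$. One must check $\gamma$ is genuinely a Jordan curve: the two $\psi$-arcs are disjoint (since $\psi$ is injective and the intervals are disjoint) and the two connecting arcs can be chosen disjoint from everything else and from each other because $\Int D_1$ and $\Int D_2$ are disjoint open discs and each contains the relevant pair of endpoints; a routine general-position adjustment inside each disc handles self-intersections of the connecting arcs. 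By the Jordan curve theorem, $S^2 \setminus \gamma$ has exactly two components $U$ and $V$. The two remaining pieces of the original curve, $J_1$ and $J_2$, have endpoints on $\gamma$ (namely in $\Int D_1$ and $\Int D_2$) but their interiors are disjoint from $\gamma$; I would argue that $J_1$ and $J_2$ lie in \emph{opposite} components — one in $U$, one in $V$ — essentially because going around $\psi(S^1)$ once, the two return arcs attach on ``opposite sides'' of the forward arcs. This is the step where I expect to spend the most care: making precise, via an orientation/crossing-parity argument or via a direct analysis of how $\psi(S^1)$ sits relative to $\gamma$, that $J_1$ and $J_2$ cannot be on the same side.

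Granting that $J_1 \subset U$ and $J_2 \subset V$, I finish as follows. The disc $D_3$ is connected and disjoint from $\gamma$: indeed $D_3$ is disjoint from $\psi([a_i,b_i])$ by hypothesis, and I may choose the two connecting arcs inside $\Int D_1$ and $\Int D_2$ to avoid $D_3$ as well, since $D_1,D_2,D_3$ are pairwise disjoint so $D_3 \cap \Int D_i = \emptyset$. Hence $D_3$ lies entirely in $U$ or entirely in $V$ — without loss of generality in $U$. But then $D_3 \cap J_2 = \emptyset$, because $J_2 \subset V$. This contradicts the hypothesis-for-contradiction, which asserted that \emph{both} $J_1$ and $J_2$ meet $D_3$. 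Therefore one of $J_1, J_2$ is disjoint from $D_3$, and trimming that return arc so that its endpoints land in $\Int D_2$ and $\Int D_1$ (respectively) produces the required interval $[c,d]$. The main obstacle, as noted, is the separation claim that the two return arcs $J_1,J_2$ occupy opposite components of $S^2\setminus\gamma$; everything else is the Jordan curve theorem plus careful but routine bookkeeping about choosing connecting arcs inside the open discs. I would handle the separation either by a clean parity-of-intersection argument (a generic arc joining an interior point of $J_1$ to an interior point of $J_2$ meets $\psi(S^1)$, hence $\gamma$, an odd number of times) or by appealing to the fact that $\psi(S^1)$ itself separates $S^2$ and tracking which side each $D_i$ falls on.
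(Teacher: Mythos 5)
Your reduction to a contradiction is fine as far as it goes: if the conclusion fails, then indeed both complementary arcs $J_1=\psi([b_1,a_2])$ and $J_2=\psi([b_2,a_1])$ must meet $D_3$ (no trimming is even needed, since their endpoints already lie in $\Int D_2$ and $\Int D_1$). The genuine gap is in the construction and use of the auxiliary curve $\gamma$. First, the connecting arcs cannot in general be ``chosen disjoint from everything else'': the arc $\psi([a_1,b_1])$ may cross $D_2$ completely several times before terminating at $\psi(b_1)$, and such a crosscut can separate $\psi(b_1)$ from $\psi(b_2)$ inside $\Int D_2$, so every arc in $\Int D_2$ joining them meets $\psi([a_1,b_1])$ and $\gamma$ is not simple. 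This particular defect is repairable by trimming the forward arcs to minimal crossing arcs meeting $D_1\cup D_2$ only in their endpoints on $\partial D_1,\partial D_2$. Second, and fatally for the argument as written, the return arcs $J_1,J_2$ need \emph{not} have interiors disjoint from $\gamma$: they may re-enter $\Int D_1$ and $\Int D_2$ arbitrarily many times and cross the connecting arcs. Hence the assertion that $J_1$ and $J_2$ each lie in a single component of $S^2\setminus\gamma$ is simply unavailable, and the concluding step ($D_3$ in one component, $J_2$ in the other, contradiction) collapses. Note also that the step you single out as the hard one is, in the idealized picture, the easy one: two disjoint arcs in a disc whose endpoint pairs interleave on the boundary circle cannot lie on the same side, so once $J_1,J_2$ really were disjoint from $\gamma$ the opposite-sides claim would follow; the real difficulty is precisely the interaction of $\psi$ with the discs away from the four distinguished endpoints, which you dismiss as routine bookkeeping. (Your proposed parity argument is also not well formed: the endpoints of a test arc joining $J_1$ to $J_2$ lie \emph{on} $\psi(S^1)$, and meeting $\psi(S^1)$ does not imply meeting $\gamma$.)

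For comparison, the paper does not attempt to build a simple auxiliary curve at all. It shrinks the discs slightly, replaces their boundaries and (a polygonal approximation of) $\psi$ by a plane graph in which \emph{every} crossing is a vertex, and then extracts the reversed arc from a face/orientation argument (Lemma~\ref{le:two12one21}, using Euler's formula and boundary orientations). That machinery is exactly what absorbs the phenomenon your sketch ignores: the curve may intersect $D_1$ and $D_2$ in many strands, so the combinatorics cannot be reduced to four arcs and two chords. A corrected version of your strategy would have to track all crossings of $J_1,J_2$ with the connecting chords, at which point you are essentially rebuilding the paper's graph-theoretic argument.
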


\begin{remark}\label{rem:from_R2_to_S2}
To prove Proposition~\ref{pr:three-discs-lemma} it is sufficient to prove the same statement with $\R^2$ replaced throughout by $S^2$, since the proper statement of the proposition may be deduced from this planar version by puncturing the sphere in a single point not belonging to $D_1 \cup D_2 \cup D_3 \cup \psi(S^1)$. Our proof of Proposition \ref{pr:three-discs-lemma} will therefore assume that the discs $D_1, D_2, D_3$ and curve $\psi(S^1)$ belong to the plane and not the sphere. An instance of this version of the proposition is illustrated in Figure~\ref{fig:three-discs}.
\end{remark}

\begin{figure}
	\includegraphics{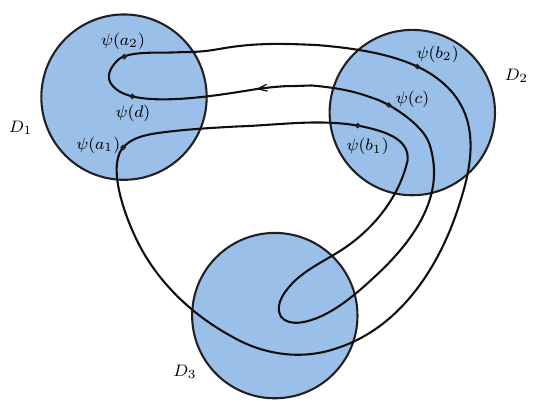}
	\caption{An example in the setting of Proposition~\ref{pr:three-discs-lemma}. The existence of two arcs which begin in $D_1$, end in $D_2$ and do not pass through $D_3$ guarantees the existence of an arc making the same journey in the reverse direction which also does not pass through $D_3$.}\label{fig:three-discs}
\end{figure}

The remainder of this section is organised as follows: 
A proof of Proposition~\ref{pr:three-discs-lemma} is given in \S\ref{ss:proof_three_disks}.
From there, we derive our main statements as follows:
\begin{center}
Proposition~\ref{pr:three-discs-lemma}
\quad $\xRightarrow{\text{\S\ref{ss:proof_core}}}$ \quad
Theorem~\ref{th:core} 
\quad $\xRightarrow{\text{\S\ref{ss:proof_stronger}}}$ \quad 
Theorem~\ref{th:stronger} 
\end{center}

\subsection{A foray into plane topology} \label{ss:proof_three_disks}

\subsubsection{Overview}\label{sss.overview}

Suppose we are given a finite family of smooth closed curves on a smooth surface which are in general position, that is, are pairwise transverse and present no triple intersections. By travelling around each of these curves (according to some preferred orientation) and keeping track of the other curves we meet along the way, we obtain some combinatorial data, namely a family of cyclic words. The topology of the surface somehow constrains these data. For instance, if we have four smooth curves on the sphere, three of which bound disjoint disks, then Proposition~\ref{pr:three-discs-lemma} provides a non-trivial restriction on the manner in which the fourth curve intersects the three others. 
We were not able to find in the literature general information about what intersection patterns are possible (though problems of similar flavour were considered by some authors, e.g.\ \cite{Pe76,JMM96}). 

Therefore we provide a direct proof of Proposition~\ref{pr:three-discs-lemma}, in the planar version noted in Remark \ref{rem:from_R2_to_S2}.
Though our argument is intuitively simple, in order to make it precise it is convenient to use the language of graph theory. It is also convenient to work with ``polygonal'' plane graphs, which we now define.

\subsubsection{Preliminaries on plane graphs}

A \emph{plane graph} is a finite graph $G$ embedded in $\R^2$ in such a way that every edge is represented by a piecewise affine simple curve, called an \emph{arc}. Every arc connects two distinct vertices (its endpoints) and any two arcs are either disjoint or intersect at only one vertex. The \emph{faces} of a plane graph $G$ are the connected components of the open set $\R^2 \setminus G$. Every plane graph has a unique unbounded face. A plane graph $G'$ is a \emph{subgraph} of a plane graph $G$ if every vertex of $G'$ is a vertex of $G$ and every arc of $G'$ is an arc of $G$. In this case, every face of $G$ is contained in a unique face of $G'$.

The following proposition collects three useful properties which correspond to Lemma~4.2.2, Proposition~4.2.6, and Theorem~4.2.9 in \cite{Di25}.
Recall that a \emph{cycle} is a graph where every vertex has degree $2$.

\begin{proposition}\label{pr:faces}
Let $G \subset \R^2$ be a connected plane graph whose vertices all  have degree at least $2$.
Then:
\begin{enumerate}[(i)]
\item\label{it:boundary_cycle}  
The boundary of each face of $G$ is a cycle subgraph.
\item\label{it:two_faces} 
Every arc of $G$ is part of the boundary of exactly two faces of $G$.
\item\label{it:Euler} 
If $G$ has $n$ vertices and $m$ arcs, then it has $m-n+2$ faces (Euler's formula).
\end{enumerate}
\end{proposition}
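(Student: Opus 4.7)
Since this proposition merely packages three standard facts about polygonal plane graphs, my plan is essentially to invoke the cited references: each of (i), (ii), (iii) is a theorem in Diestel's text \cite{Di25}, and our definition of ``plane graph'' (piecewise affine arcs, distinct endpoints, pairwise disjoint or meeting only at a shared vertex) coincides with the one used there, so the three citations apply verbatim.

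If one wished to reconstruct the proofs rather than cite them, the natural route is as follows. For (i), I would carry out a local analysis at a vertex $v$ lying on the boundary of a face $F$. Because arcs are piecewise affine, the finitely many arcs incident to $v$ partition a small punctured disc around $v$ into circular sectors, each entirely contained in a single face of $G$; the sectors belonging to $F$ are bounded by pairs of consecutive incident arcs. The hypothesis $\deg(v)\geq 2$, combined with the fact that $F$ is one connected component of the complement, forces exactly two arcs of $\partial F$ to meet at $v$, so $\partial F$ is a $2$-regular connected subgraph, i.e.\ a cycle.

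For (ii), I would use the same local picture at an interior point of an arc $e$: a small neighbourhood is crossed transversely by $e$, and the two sides belong to two faces. The hypothesis that both endpoints of $e$ have degree at least $2$ prevents the two sides from being joined by a short detour around an endpoint, so these two faces are distinct. Statement (iii) can then be proved by induction on the number of arcs, using the fact that removing any arc $e$ lying on a cycle of $G$ merges the two faces adjacent to $e$ (which are distinct by (ii)) into a single face, while keeping the graph connected and reducing $m$ by one without changing $n$.

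The only genuine subtleties in carrying this out from scratch are the justification of the cyclic-sector picture around each vertex (immediate from piecewise affineness of arcs) and the distinctness claim in (ii) (immediate from the degree hypothesis). Since both are handled cleanly in \cite{Di25}, the main step of the proof I plan to write is simply to quote the three results from that reference.
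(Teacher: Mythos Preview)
Your approach matches the paper's exactly: the paper gives no proof of this proposition but simply records that parts (i), (ii), (iii) correspond to Lemma~4.2.2, Proposition~4.2.6, and Theorem~4.2.9 of Diestel \cite{Di25}. Your plan to quote those same results is therefore precisely what the paper does.
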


An \emph{orientation} on an arc $I \subset \R^2$ consists of specifying one endpoint as the \emph{tail} and the other as the \emph{head}. A \emph{positive parametrisation} of an oriented arc $I$ is an injective piecewise affine map $\gamma$ from an interval $[a,b] \subset \R$ onto $I$ such that $\gamma(a)$ is the tail of $I$ and $\gamma(b)$ is the head of $I$.
Orientations of arcs can be used to distinguish between the two faces provided by part~\eqref{it:two_faces} of Proposition~\ref{pr:faces}, as follows:

\begin{proposition}\label{pr:left_face}
Let $G$ be a connected plane graph whose vertices all have degree at least $2$.
Let $I$ be an arc of $G$, and endow it with an orientation.
Then there exists a unique face $F$ of $G$ whose closure contains $I$ and which satisfies the following property:
for every piecewise affine parametrisation $\gamma \colon [a,b] \to I$ such that $\gamma(a)$ is the tail of $I$, every parameter $t \in (a,b)$ for which the velocity $\gamma'(t)$ exists, and every point $q$ in $F$ sufficiently close to $\gamma(t)$, the ordered pair of vectors
$\big( \gamma'(t) , q- \gamma(t) \big)$ is positively oriented\footnote{We say that an ordered pair $(v_1,v_2)$ of vectors $v_i=(x_i,y_i) \in \R^2$ is \emph{positively oriented} if $x_1 y_2 - x_2 y_1 > 0$.}.
\end{proposition}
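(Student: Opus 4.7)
The plan is to identify the desired face $F$ as the one that locally contains the ``left half-disks'' of $I$ determined by the orientation. By Proposition~\ref{pr:faces}\eqref{it:two_faces}, exactly two faces $F_+, F_-$ of $G$ have $I$ on their boundary, so I only need to show that the stated property distinguishes these two faces.

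First I would fix a positive piecewise affine parametrisation $\gamma \colon [a,b] \to I$ with corners $s_1<\dots<s_{k-1}$. For each regular $t \in (a,b)\setminus\{s_1,\dots,s_{k-1}\}$, since $\gamma(t)$ is not a vertex of $G$ and arcs of $G$ meet only at vertices, one may choose $r>0$ so that $B_r(\gamma(t))\cap G = B_r(\gamma(t))\cap I$ is a straight diameter of direction $\gamma'(t)$. The open half-disk
\[
H_t(r) \coloneq \bigl\{q \in B_r(\gamma(t)) : \det(\gamma'(t),\, q - \gamma(t)) > 0\bigr\}
\]
is then connected, open, and disjoint from $G$, so it lies in a single face $L(t) \in \{F_+, F_-\}$.

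Next I would show that $L(t)$ does not depend on the regular parameter $t$. On each open linear piece $(s_i, s_{i+1})$, $\gamma'$ is constant, and the half-disks $H_t(r)$ along this segment pairwise overlap in open sets, which forces $L(t)$ to be constant there. To cross each corner $s_i$ I would analyse a small ball $B$ around $\gamma(s_i)$: inside $B$ the arc $I$ is V-shaped and $B \setminus I$ consists of two open sectors, and a short case analysis distinguishing leftward from rightward turns shows that the one-sided limits $\lim_{t \uparrow s_i} H_t(r)$ and $\lim_{t \downarrow s_i} H_t(r)$ fall into the same sector, hence into the same face. I may then set $F \coloneq L(t)$, which is now well-defined.

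For the existence claim, any $q \in F$ sufficiently close to $\gamma(t)$ lies in $B_r(\gamma(t))\setminus I = H_t(r) \sqcup H_t^-(r)$, where $H_t^-(r)$ is the opposite half-disk and is contained in $F_- \neq F$; so $q \in H_t(r)$ and $(\gamma'(t), q-\gamma(t))$ is positively oriented. Uniqueness is the complementary observation: points of $F_-$ near $\gamma(t)$ lie in $H_t^-(r)$ and produce the reverse orientation, so $F_-$ fails the property. The main technical obstacle is the corner analysis, which is geometrically transparent from a picture but somewhat bureaucratic to write out, since one must separately verify in the leftward- and rightward-turning subcases that the left half-disks on either side of the corner fall into the same sector of the local V-shape.
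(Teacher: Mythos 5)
Your proof is correct and takes essentially the same approach as the paper's: the paper cites Proposition~\ref{pr:faces}\eqref{it:two_faces} for the two-face dichotomy $\{F_+,F_-\}$ and then invokes ``an easy connectedness argument'' for well-definedness, which is precisely the local-constancy-via-overlapping-left-half-disks argument (together with the corner case analysis) that you spell out in detail. One small wording slip: the half-disks along a linear piece overlap only for nearby parameters rather than pairwise, but local constancy plus connectedness of the open interval gives the same conclusion, and independence of the choice of positive parametrisation is immediate since any two differ by an increasing piecewise affine reparametrisation, which preserves the direction of $\gamma'(t)$.
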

\begin{proof}
Given such a parameterisation $\gamma$ it follows directly from Proposition \ref{pr:faces}\eqref{it:two_faces} that there exists a face $F$ which makes the pair $\big( \gamma'(t) , q- \gamma(t) \big)$ positively oriented for all $q \in F$ close enough to $\gamma(t)$, for at least one value of $t$. That this face has the same property for all times of differentiability $t \in [a,b]$ and for all parameterisations follows by an easy connectedness argument.
\end{proof}

In the situation of Proposition~\ref{pr:left_face}, we say that $F$ as above is the face of $G$ to the \emph{left} of the oriented arc $I$.

An \emph{orientation} for a cycle plane graph $C$ consists of a choice of orientation for each arc in such a way that each vertex is the tail of one arc and the head of another. Clearly, $C$ admits exactly two orientations: orienting one of its arcs determines the orientation on the others.

\begin{proposition}\label{pr:boundary_orientation}
Let $G$ be a connected plane graph whose vertices all have degree at least $2$.
Let $F$ be a face of $G$, and let $C = \partial F$ be its boundary cycle.
Then $C$ admits a unique orientation such that $F$ sits to the left of every arc of $C$.
\end{proposition}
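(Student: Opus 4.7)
The plan is to orient $C$ by orienting each of its arcs individually using Proposition~\ref{pr:left_face}, and then to verify that these local choices assemble coherently at every vertex. For each arc $I$ of $C$, Proposition~\ref{pr:faces}\eqref{it:two_faces} tells us that $I$ lies on the boundary of exactly two faces of $G$, and $F$ is one of them; Proposition~\ref{pr:left_face} applied to $F$ therefore produces a unique orientation of $I$---call it the \emph{$F$-left orientation}---under which $F$ sits to its left. Assigning this orientation to each arc of $C$ gives the only possible candidate for the desired orientation, so the uniqueness claim will be automatic once the candidate is shown to be a genuine orientation of the cycle $C$.

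To establish that it is one, I need to check that at each vertex $v$ of $C$ the two arcs $I_1, I_2$ of $C$ meeting $v$ are oriented so that $v$ is the head of one and the tail of the other. The plan is to fix a small open disk $U$ around $v$ meeting no other vertex of $G$ and no arc of $G$ not incident to $v$, shrinking $U$ further so that each arc of $G$ meets $U$ in a single affine segment with endpoint $v$. I will write $T_1, T_2 \in \R^2$ for unit tangent vectors at $v$ along $I_1, I_2$ pointing away from $v$ into the respective arcs; since the arcs are distinct we have $T_1 \neq T_2$.

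The key topological step is to identify the portion of $F$ near $v$ with a single open angular sector $S_v$ bounded on either side by $I_1$ and $I_2$---so in particular $I_1$ and $I_2$ must be adjacent in the cyclic order of arcs of $G$ at $v$. Indeed, every open angular sector at $v$ is contained in a single face of $G$ whose boundary locally contains both arcs bounding that sector; since $\partial F = C$ meets $v$ only along $I_1$ and $I_2$ and since $v \in \partial F$, any sector of $F$ at $v$ must be bounded by $I_1$ and $I_2$, and a short case analysis in the cyclic arrangement at $v$ shows that exactly one such sector belongs to $F$.

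Once $S_v$ has been pinned down as one of the two open sectors determined by the half-lines emanating from $v$ in the directions $T_1$ and $T_2$, the argument concludes with an elementary rotation computation. Translating the condition ``$F$ lies to the left'' from Proposition~\ref{pr:left_face} via an affine positive parametrisation of $I_j$ on a neighbourhood of $v$, one sees that $v$ is the tail of $I_j$ under the $F$-left orientation precisely when $S_v$ lies immediately counterclockwise of $T_j$, and is the head precisely when $S_v$ lies immediately clockwise. Because $S_v$ is sandwiched between $T_1$ and $T_2$, it must be counterclockwise of one of them and clockwise of the other, so $v$ is the tail of exactly one of $I_1, I_2$ and the head of the other. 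I expect the main difficulty to lie in the topological bookkeeping of the third paragraph---in particular in verifying the adjacency of $I_1$ and $I_2$ in the cyclic order of arcs at $v$---after which the angular argument is routine.
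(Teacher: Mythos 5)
Your proof is correct and takes essentially the same approach as the paper's: both hinge on the same local coherence fact at each vertex of $C$, with your angular analysis making explicit the step the paper compresses into ``easily verified using connectedness.'' The remaining difference is cosmetic—you orient every arc $F$-left and then verify that these choices assemble into a valid cycle orientation, whereas the paper orients a single arc, extends that orientation uniquely around the cycle, and then propagates ``$F$ to the left'' along consecutive arcs by induction.
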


\begin{proof}
Choose arbitrarily an arc $I$ in $C$. It follows easily from Proposition~\ref{pr:left_face} and Proposition~\ref{pr:faces}\eqref{it:two_faces} that exactly one of the two orientations on $I$ places $F$ to the left of $I$, and this orientation clearly has a unique extension to the whole of $C$. It remains only to show that $F$ is to the left of every arc of $C$ with respect to this orientation. It is easily verified using connectedness that if $J_1$ and $J_2$ are consecutive arcs of $C$ with respect to this orientation, and $F$ lies to the left of $J_1$, then $F$ also lies to the left of $J_2$. The result follows by induction on the arcs of~$C$. 
\end{proof}

In the situation of Proposition~\ref{pr:boundary_orientation}, we say that the face $F$ sits to the \emph{left} of the oriented cycle~$C$.
If $G$ itself is a cycle then it has only two faces, one of which sits to the left of the cycle.

The following property is an immediate consequence of our definitions:

\begin{proposition}\label{pr:two_left_faces}
Let $G$ be a connected plane graph whose vertices all have degree at least $2$.
Let $G'$ be a subgraph of $G$ with the same properties.
Let $I$ be an oriented arc of $G'$ (and therefore also of $G$).
Let $F$ (respectively $F'$) be the face of $G$ (respectively $G'$) to the left of $I$.
Then $F \subseteq F'$.
\end{proposition}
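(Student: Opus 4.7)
The plan is to unpack the definitions directly, exploiting the local characterisation of the ``face to the left'' given by Proposition~\ref{pr:left_face}.

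First I would note that since $G' \subseteq G$ as subsets of $\R^2$, we have the reverse inclusion of complements $\R^2 \setminus G \subseteq \R^2 \setminus G'$. The face $F$ of $G$ is by definition a connected component of $\R^2 \setminus G$, so it is a connected subset of $\R^2 \setminus G'$, and is therefore contained in a unique connected component $F''$ of $\R^2 \setminus G'$. This $F''$ is a face of $G'$, so it remains only to identify $F''$ with $F'$.

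To do this, pick a positive parametrisation $\gamma \colon [a,b] \to I$ and any parameter $t \in (a,b)$ at which $\gamma'(t)$ exists. By Proposition~\ref{pr:left_face} applied to $G$, there exist points $q \in F$ arbitrarily close to $\gamma(t)$ such that $(\gamma'(t), q - \gamma(t))$ is positively oriented. Since $F \subseteq F''$, these points $q$ also lie in $F''$. By the uniqueness clause of Proposition~\ref{pr:left_face} applied now to $G'$, this forces $F'' = F'$. Hence $F \subseteq F'' = F'$, as required.

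The argument is purely a matter of chasing definitions, so no genuine obstacle is expected; one only needs to be careful that ``face to the left'' is defined by a local condition on one side of the oriented arc $I$, which behaves monotonically under passage to a subgraph because faces can only grow when arcs are removed.
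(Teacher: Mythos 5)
Your proof is correct, and since the paper itself offers no argument beyond calling the statement ``an immediate consequence of our definitions'', yours is in effect the missing write-up. The first step --- that each face of $G$ sits inside a unique face of $G'$ because $\R^2\setminus G\subseteq\R^2\setminus G'$ --- is exactly the monotonicity one should articulate. One small point is worth tightening in the second step: to invoke the uniqueness in Proposition~\ref{pr:left_face} for $G'$, you must verify that $F''$ satisfies the full defining property, namely that \emph{every} $q\in F''$ sufficiently close to $\gamma(t)$ gives a positively oriented pair, whereas you have only exhibited \emph{some} such $q$ (those coming from $F$). This is easily repaired: by Proposition~\ref{pr:faces}\eqref{it:two_faces}, $I$ lies on the boundary of exactly two faces of $G'$, which near an interior point $\gamma(t)$ of $I$ occupy the two sides of the arc; the one opposite to $F'$ has every nearby $q$ negatively oriented, so the existence of even one positively oriented $q\in F''$ near $\gamma(t)$ already rules it out, forcing $F''=F'$. (You should also note, though it is immediate, that $\overline{F''}\supseteq\overline{F}\supseteq I$, so $F''$ is indeed one of the two candidate faces.) With that one-line patch the argument is complete.
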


Finally, we state the following lemma, whose proof is immediate:

\begin{lemma}\label{le:captain_obvious}
Let $C$ be an oriented cycle whose vertices are labelled either $1$ and~$2$, with each label occurring at least once. Then $C$ must contain an arc with tail labelled~$1$ and head labelled $2$, and another arc with tail labelled $2$ and head labelled~$1$.
\end{lemma}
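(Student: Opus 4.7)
The plan is to unfold the cyclic structure and count label transitions. I would enumerate the vertices of $C$ in the cyclic order dictated by the orientation as $v_0, v_1, \ldots, v_{n-1}$, with indices understood modulo $n$, so that the arcs of $C$ are precisely $v_i \to v_{i+1}$ for $i = 0, \ldots, n-1$. Writing $\ell(v) \in \{1,2\}$ for the label of a vertex $v$, I would then consider the sequence of labels $\ell(v_0), \ell(v_1), \ldots, \ell(v_{n-1}), \ell(v_0)$.

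Next, let $N_{12}$ denote the number of indices $i$ for which $\ell(v_i) = 1$ and $\ell(v_{i+1}) = 2$, and define $N_{21}$ analogously. The lemma asserts that $N_{12} \geq 1$ and $N_{21} \geq 1$. The key observation is that because the sequence of labels is cyclic and thus returns to its initial value, the number of $1 \to 2$ transitions must equal the number of $2 \to 1$ transitions; that is, $N_{12} = N_{21}$. (This can be seen, for example, by summing $\ell(v_{i+1}) - \ell(v_i)$ over $i = 0, \ldots, n-1$, which gives zero.)

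Finally, since both labels occur among the vertices by hypothesis, the cyclic sequence of labels is not constant, so at least one of $N_{12}$ or $N_{21}$ is strictly positive. Combined with the equality $N_{12} = N_{21}$, this forces both counts to be at least $1$, which is exactly the conclusion of the lemma. There is no real obstacle here; the argument is a one-line parity/telescoping observation, and the only care required is setting up the cyclic indexing correctly so that the equality $N_{12} = N_{21}$ is manifest.
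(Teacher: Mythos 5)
Your proof is correct. The paper itself offers no proof of this lemma, merely remarking that its proof is ``immediate,'' so there is no paper argument to compare against; your telescoping count of label transitions around the cycle (showing $N_{12}=N_{21}$ and that their common value is positive because the label sequence is non-constant) is a clean and complete way to make the ``immediate'' step explicit, and is essentially the argument one would expect the authors to have in mind.
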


\subsubsection{A key lemma}

The following result forms the skeleton of the proof of Proposition~\ref{pr:three-discs-lemma}:
\begin{lemma}\label{le:two12one21}
Let $G$ be a plane graph which is the union of four cycle subgraphs $\Gamma_1$, $\Gamma_2$, $\Gamma_3$ and $\Theta$.
Suppose that the bounded faces of the $\Gamma_i$'s have disjoint closures, and that every vertex of $\Theta$ is also a vertex of one of the $\Gamma_i$'s.
Fix an orientation on the cycle $\Theta$.
If $\Theta$ includes two distinct arcs with tail in $\Gamma_1$ and head in $\Gamma_2$, then it also includes an arc with tail in $\Gamma_2$ and head in $\Gamma_1$. 
\end{lemma}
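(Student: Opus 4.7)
My plan is to proceed by contradiction: suppose $\Theta$ has no arc with tail in $\Gamma_2$ and head in $\Gamma_1$, and aim to contradict the hypothesis of two distinct $1\to 2$ arcs. The idea is to apply Lemma~\ref{le:captain_obvious} to the boundary cycle of a carefully chosen face of $G$, thereby producing a $2\to 1$ arc that must belong to $\Theta$.

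If $\Theta$ has no vertex on $\Gamma_3$ then every vertex of $\Theta$ is labelled $1$ or $2$, both labels appear, and Captain Obvious applied to $\Theta$ itself yields a $2\to 1$ arc directly. So I may assume $\Theta$ has vertices on all three $\Gamma_i$'s; then $G$ is connected with minimum degree at least $2$, and the structural propositions of the section apply. Fix a $1\to 2$ arc $I_1$ of $\Theta$, let $F$ be the face of $G$ to its left (Proposition~\ref{pr:left_face}), and let $C = \partial F$ be the boundary cycle of $F$ (Proposition~\ref{pr:faces}\eqref{it:boundary_cycle}) oriented so that $F$ sits to its left (Proposition~\ref{pr:boundary_orientation}).

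Two facts about $C$ drive the argument. First, \emph{every $\Theta$-arc lying on $C$ inherits its $\Theta$-orientation in $C$}: by Proposition~\ref{pr:two_left_faces} applied with $G' = \Theta$, the face $F$ is contained in the unique face of the Jordan curve $\Theta$ to the left of $I_1$, namely one fixed component of $\R^2\setminus \Theta$, and for any other $\Theta$-arc $e$ on $C$ that same face of $\Theta$ must lie to the left of $e$ with $C$'s orientation, which pins down this orientation to agree with $\Theta$'s. Second, and crucially, \emph{$C$ does not visit any vertex on $\Gamma_3$}. Granting both facts, arcs of $C$ are either $\Theta$-arcs in their $\Theta$-orientation or arcs of $\Gamma_i$ for $i \in \{1,2\}$ (which have both endpoints labelled $i$, hence are never of type $2\to 1$), so no arc of $C$ is of type $2\to 1$ under our assumption. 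But every vertex of $C$ is labelled $1$ or $2$, both labels appear (as $I_1$'s endpoints are labelled $1$ and $2$), and Captain Obvious therefore forces a $2\to 1$ arc in $C$---contradiction.

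The main obstacle is the second fact above. The hypothesis of \emph{two} distinct $1\to 2$ arcs is essential: with only one such arc the boundary $C$ can easily wrap around and touch $\Gamma_3$. My plan is to show that $I_2$ also lies on $\partial F$ (i.e.\ $F$ coincides with the face of $G$ to the left of $I_2$), and that the portions of $C$ running between $I_1$ and $I_2$ stay in $\Gamma_1 \cup \Gamma_2$; this should follow from a careful local analysis at each multi-valent vertex of $G$, using the pairwise disjointness of the bounded discs of the $\Gamma_i$'s and the Jordan property of $\Theta$. As a fall-back, if the direct route falters, I would argue by induction on the number of $\Gamma_3$-vertices of $\Theta$, using at each step a local modification at a $\Gamma_3$-vertex that preserves the net count of $1\to 2$ and $2\to 1$ arcs of $\Theta$.
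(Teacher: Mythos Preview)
Your orientation claim (every $\Theta$-arc on $C$ carries its $\Theta$-orientation) is correct and is exactly how the paper handles that point. The gap is entirely in your ``second fact'' that $C=\partial F$ avoids $\Gamma_3$. Your plan to establish it --- show that $I_2$ also lies on $\partial F$ and that the pieces of $C$ between $I_1$ and $I_2$ stay in $\Gamma_1\cup\Gamma_2$ --- does not work as stated. There is no reason the second arc $I_2$ should lie on the boundary of the particular face $F$ of $G$ to the left of $I_1$; the graph $G$ may have many faces separating $I_1$ from $I_2$, and $\partial F$ can perfectly well wander off through $\Gamma_3$. More fundamentally, you have chosen $I_1$ arbitrarily, but the choice matters: for one of the two $1\to2$ arcs the face of $G$ on its $\Theta$-left side will typically lie on the ``wrong'' side and meet $\Gamma_3$. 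Your fall-back induction on the number of $\Gamma_3$-vertices via unspecified local modifications is too vague to assess.

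The paper supplies exactly the missing idea: pass to the auxiliary subgraph $G'\coloneq \Gamma_1\cup\Gamma_2\cup I\cup J$ (where $I,J$ are the two $1\to2$ arcs). Euler's formula gives $G'$ exactly four faces: $D_1$, $D_2$, a face containing $\overline{D_3}$, and a fourth face $F'$ disjoint from $\overline{D_3}$. Both $I$ and $J$ lie on $\partial F'$, and Lemma~\ref{le:captain_obvious} applied to $\partial F'$ shows that exactly one of them --- call it $I$ --- has its $\Theta$-orientation agreeing with the $\partial F'$-orientation. Now take $F$ to be the face of $G$ to the left of this correctly chosen $I$: Proposition~\ref{pr:two_left_faces} gives $F\subseteq F'$, whence $\partial F\subseteq\overline{F'}$ is disjoint from $\overline{D_3}$, which is precisely your missing ``second fact''. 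From there your argument (Captain Obvious on $\partial F$, then the orientation check) goes through. So the skeleton of your proof is right, but the auxiliary graph $G'$ is the device that both selects the correct arc and confines $\partial F$ away from $\Gamma_3$; without it the argument has a real hole.
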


\begin{proof}
Let $D_i$ denote the bounded face of the cycle subgraph $\Gamma_i$, for $i=1,2,3$.
Let $E$ be the (perhaps unbounded) face of the oriented cycle $\Theta$ sitting to its left. 
Let $I$ and $J$ be two distinct arcs of $\Theta$ with tails in $\Gamma_1$ and heads in $\Gamma_2$. 

Consider the plane graph $G' \coloneq \Gamma_1 \cup \Gamma_2 \cup I \cup J$.
Then $G'$ is connected and has two more arcs than vertices; furthermore, every vertex has degree either $2$ or~$3$.
Thus, by Euler's formula (part~\eqref{it:Euler} of  Proposition~\ref{pr:faces}), $G'$ has four faces. 
Two of them are $D_1$ and $D_2$, and these are the faces that do not contain $I$ nor $J$ as part of their boundaries. 
There is a third face which contains $\overline{D_3}$, and a fourth face which is disjoint from $\overline{D_3}$. Let $F'$ be that fourth face. 

Consider the boundary $\partial F'$, which is a cycle in $G'$ by Proposition~\ref{pr:faces}\eqref{it:boundary_cycle}.
We orient this cycle in such a way that the face $F'$ sits to its left (recall Proposition~\ref{pr:boundary_orientation}).
There are exactly two arcs of $\partial F'$ which are not contained in $\Gamma_1 \cup \Gamma_2$, namely $I$ and $J$.
An application of Lemma~\ref{le:captain_obvious} shows that exactly one of these two arcs has tail in $\Gamma_1$ with respect to the orientation on $\partial F'$. We can assume that $I$ is the arc with this property, by renaming if necessary. 
This means that the orientations of cycles $\Theta$ and $\partial F'$ agree on the common arc $I$ and  disagree on the other common arc $J$.
Therefore, the face $F'$ (of the graph $G'$) and the face $E$ (of the graph $\Theta$) both sit to the left of $I$.

Since $G$ is a union of cycles, every vertex has degree at least $2$.
Let $F$ be the face of $G$ sitting to the left of the arc $I$, with respect to the orientation which $I$ shares with both $\Theta$ and $\partial F$. 
Since both $\Theta$ and $G'$ are subgraphs of $G$, Proposition~\ref{pr:two_left_faces} gives:
\begin{equation}\label{e.face_inclusion}
F \subseteq E \cap F' \, .
\end{equation}
We orient the cycle $\partial F$ so that $F$ sits to its left.
This cycle is contained in the closure of $F'$, which is disjoint from $\overline{D_3}$. 
Recalling the assumption that every vertex of $G$ is a vertex of one of the $\Gamma_i$'s, we see that every vertex of $\partial F$ is a vertex of either $\Gamma_1$ or $\Gamma_2$. 
Furthermore, the oriented cycle $\partial F$ contains the oriented arc $I$, whose tail is in $\Gamma_1$ and whose head is in $\Gamma_2$. Lemma~\ref{le:captain_obvious} implies that $\partial F$  must contain an oriented arc $K$ with tail in $\Gamma_2$ and head in $\Gamma_1$. 

Let $\tilde{E}$ be the face of $\Theta$ sitting to the left of the oriented arc $K$.
Since $F$ is the face of $G$ sitting to the left of the same arc,
Proposition~\ref{pr:two_left_faces} yields: 
\begin{equation}
F \subseteq \tilde{E} \, .
\end{equation}
So \eqref{e.face_inclusion} implies that $E \cap \tilde{E} \neq \emptyset$.
Since $E$ and $\tilde E$ are faces of the same graph $\Theta$, they must coincide: $\tilde{E} = E$.
This proves that $E$ sits to the left of $K$.
That is, the orientation on the arc $K$ is compatible with the orientation on $\Theta$.
We have found an arc of $\Theta$ with tail in $\Gamma_2$ and head in $\Gamma_1$, and the proof is concluded.
\end{proof}

\subsubsection{Polygonal approximation}

It is convenient to use $\R/\Z$ as a model of the unit circle.
A continuous closed curve $\psi \colon S^1 \to \R^2$ is called \emph{polygonal} if it is piecewise affine. 

\begin{lemma}\label{le:polygonal}
If $\psi \colon S^1 \to \R^2$ is a Jordan curve, then for every $\delta>0$ there exists a polygonal Jordan curve $\hat \psi \colon S^1 \to \R^2$ such that $\sup_{t \in S^1} \|\psi(t)-\hat\psi(t)\|<\delta$. 
\end{lemma}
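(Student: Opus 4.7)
The plan is to take $\hat\psi$ as a piecewise-linear interpolation of $\psi$ at a sufficiently fine partition of $S^1$ and then apply local surgery to remove any self-intersections. The main subtlety lies in showing that the surgery can be performed without destroying the sup-norm closeness; this is achieved by exploiting the uniform continuity of $\psi^{-1}$.

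Since $\psi$ is continuous and injective on the compact space $S^1$, it is a homeomorphism onto its image, so both $\psi$ and $\psi^{-1}$ are uniformly continuous. Let $\beta(\eta) \to 0$ as $\eta \to 0$ be a modulus of continuity of $\psi^{-1}$, meaning that $\|\psi(s)-\psi(t)\| < \eta$ implies $|s-t|_{S^1} < \beta(\eta)$. Fix an auxiliary $\delta'>0$, to be chosen small at the end, and, using uniform continuity of $\psi$, choose a partition $0=t_0 < t_1 < \cdots < t_N = 1$ of $S^1$ so fine that each arc image $\psi([t_i, t_{i+1}])$ has diameter less than $\delta'$. Let $\hat\psi_0$ be the piecewise-linear closed curve through the vertices $v_i = \psi(t_i)$; by the triangle inequality, $\sup_t \|\psi(t)-\hat\psi_0(t)\| < 2\delta'$. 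After a generic arbitrarily small perturbation of the vertices (which preserves the sup-norm bound up to negligible error), I may assume that no three vertices of $\hat\psi_0$ are collinear and that all self-intersections of $\hat\psi_0$ are transversal crossings at interior points of the segments.

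If two non-adjacent segments $[v_{i-1}, v_i]$ and $[v_{j-1}, v_j]$ of $\hat\psi_0$ cross at a point $p$, then $\|v_{i-1}-v_j\| < 2\delta'$, so by uniform continuity of $\psi^{-1}$ the $S^1$-distance between $t_{i-1}$ and $t_j$ is at most $\beta(2\delta')$. The crossing splits $\hat\psi_0$ into two polygonal sub-loops meeting at $p$, one of which — call it the \emph{small loop} — corresponds to the shorter $S^1$-arc between $t_{i-1}$ and $t_j$. I would excise the small loop, replacing the corresponding portion of $\hat\psi_0$ by the shortcut $v_{i-1}\to p\to v_j$, and reparametrize over $S^1$ accordingly. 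On the excised parameter interval, the new curve is within $\delta' + \omega_\psi(\beta(2\delta'))$ of $\psi$ by two successive applications of the triangle inequality, where $\omega_\psi$ is the modulus of continuity of $\psi$; on the rest of $S^1$ the curve is unchanged.

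The main obstacle will be iterating this procedure to eliminate all self-intersections while maintaining a uniform bound. A naive iteration may introduce new crossings near the shortcut vertex $p$, but any such crossings necessarily occur between segments whose parameter endpoints lie within $\beta(2\delta')$ of $t_{i-1}$ on $S^1$, so they are themselves of small scale and can be controlled by refining the partition near $p$ before each surgery. After finitely many steps the resulting polygon has no self-intersections and lies within $2\delta' + \omega_\psi(\beta(2\delta'))$ of $\psi$ in sup norm; choosing $\delta'$ small enough that this quantity is less than $\delta$ completes the construction.
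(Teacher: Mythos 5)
The paper does not prove this lemma itself but cites Jordan's original argument together with \cite{Tv80} (Lemma~2) and \cite{Hales07} (Lemma~9); your proposal is thus a fresh proof attempt via piecewise-linear interpolation plus loop-excision, which is indeed the standard route and the one the cited references follow in spirit. The correct observation that $\psi^{-1}$ is uniformly continuous, so that any crossing between the $i$-th and $j$-th segments forces $t_{i-1}$ and $t_j$ to lie at $S^1$-distance less than $\beta(2\delta')$, is exactly the mechanism that makes the surgery local, and your sup-norm bookkeeping for a single surgery is essentially right (up to a factor of $2$ in $\delta'$).

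There is, however, a real gap in the iteration step, and it stems from a misconception. You worry that excising the small loop and inserting the shortcut $v_{i-1}\to p\to v_j$ ``may introduce new crossings near $p$,'' and you propose to control these by ``refining the partition near $p$ before each surgery.'' But the shortcut does not add any new points at all: $[v_{i-1},p]\subset[v_{i-1},v_i]$ and $[p,v_j]\subset[v_{j-1},v_j]$, so the post-surgery polygon is pointwise contained in the pre-surgery polygon. Hence every crossing of the new polygon was already a crossing of the old one, while the crossing at $p$ itself disappears (the two shortcut edges share the endpoint $p$ and are not collinear by general position). The crossing count therefore strictly decreases at each step, and termination is automatic with no refinement needed. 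Indeed refining the partition would \emph{create} genuinely new edges not contained in the old polygon and so could introduce new crossings, undercutting the very termination you need; this proposed fix should be deleted. You should also be more careful with the reparametrisation: after several surgeries the collapsed parameter intervals need to be re-spread consistently, and the uniform sup-norm bound $2\delta'+\omega_\psi(\beta(2\delta'))$ must be shown to persist across \emph{all} surgeries, not just one. This follows because every excision affects only a parameter arc of length less than $\beta(2\delta')$ and the replaced sub-loop lies within $2\delta'$ of the corresponding vertex, but the accounting should be stated explicitly, e.g.\ by fixing once and for all a reparametrisation that moves each $s\in S^1$ by at most $\beta(2\delta')$. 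With the termination point corrected and the reparametrisation made precise, the argument is sound.
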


This lemma is due to Jordan~\cite[\S 40, pp.~588--589]{Jordan}. Other proofs can be found in \cite[Lemma~2]{Tv80} and \cite[Lemma~9]{Hales07}.

\subsubsection{Derivation of Proposition~\ref{pr:three-discs-lemma}}
 
We now use Lemmas \ref{le:two12one21} and \ref{le:polygonal} to prove the planar analogue of Proposition~\ref{pr:three-discs-lemma} described in Remark~\ref{rem:from_R2_to_S2}, from which the proper statement of the proposition follows directly.

Let $\Gamma_1$ and $\Gamma_2$ be polygonal closed curves in $\R^2$ which enclose bounded faces $F_1$ and $F_2$ such that $\psi(a_1),\psi(a_2) \in F_1$,  $\psi(b_1), \psi(b_2) \in F_2$, and $\overline{F_i}\subseteq \Int D_i$ for $i=1,2$. Let $\Gamma_3$ be a polygonal closed curve in $\R^2$ enclosing a bounded face $F_3\supset D_3$ whose closure does not intersect $\psi([a_1,b_1]) \cup \psi([a_2,b_2]) \cup D_1 \cup D_2$. Choose $\delta>0$ such that the closed $\delta$-ball around every point of $\Gamma_i$ is contained in $\Int D_i$ if $i=1,2$ or in $\R^2 \setminus D_3$ if $i=3$. 
Using Lemma~\ref{le:polygonal}, we choose a (parametrized) polygonal path $\hat\psi \colon S^1 \to \R^2$ which is uniformly $\delta$-close to $\psi$. Perturbing $\hat{\psi}$ if necessary, we can assume that its image $\Theta \coloneq \hat{\psi}(S^1)$ has finite intersection with each of the polygonal closed curves $\Gamma_i$. Endow the set $G \coloneq \Gamma_1 \cup \Gamma_2 \cup \Gamma_3 \cup \Theta$ with the structure of a plane graph by declaring that each point of $(\Gamma_1 \cup \Gamma_2 \cup \Gamma_3) \cap \Theta$ is a vertex, and by adding extra vertices to the $\Gamma_i$'s (but not to $\Theta$) in order to eliminate any arcs with the same set of endpoints. For each $i=1,2$, the set $\psi([a_i,b_i])$ contains an arc with tail in $\Gamma_1$ and head in $\Gamma_2$. These two arcs are distinct, so we may apply Lemma~\ref{le:two12one21} to deduce the existence of an arc in $\Theta$ with tail in $\Gamma_2$ and head in $\Gamma_1$. This asserts the existence of an interval $[c,d]\subset S^1$ such that $\hat\psi(c) \in \Gamma_2$, $\hat\psi(d) \in \Gamma_1$ and $\hat\psi([c,d]) \cap \overline{F_3}=\emptyset$. By the definition of $\delta$ and $\hat\psi$ this implies $\psi(c) \in \Int D_2$, $\psi(d) \in \Int D_1$ and $\psi([c,d])\cap D_3 = \emptyset$ as required. The proof of Proposition~\ref{pr:three-discs-lemma} is now complete.

\subsection{Proof of Theorem~\ref{th:core}} \label{ss:proof_core}

We will prove Theorem~\ref{th:core} in the special case $T=0$ from which the general statement follows by considering $\sigma^T\phi$ in place of $\phi$. We equip $S^2$ with the angle metric $d$. 
Define $x_1\coloneq \phi(0) \in S^2$ and without loss of generality suppose that $\delta>0$ is small enough that $D_3\coloneq S^2 \setminus B_\delta(x_1)$ is a topological disc. Let $T_3>0$ be a real number which is small enough that $\phi([0,T_3])$ is contained in the open ball $B_\delta(x_1)$, and define $x_2\coloneq \phi(T_3)$. Obviously $x_1$ and $x_2$ are distinct.  Without loss of generality we suppose that $\varepsilon \in (0,\delta/2)$ is small enough that the closed $\varepsilon$-balls around $x_1$ and $x_2$ are disjoint subsets of the open $\delta$-ball centred at $x_1$, and we denote these balls respectively by  $D_1$ and $D_2$. Clearly $D_1$, $D_2$ and $D_3$ are pairwise disjoint topological discs in $S^2$. Let $T_2$ be the latest time $t\in (0,T_3)$ such that $\phi(t) \in \partial D_1$. 
Since $\phi$ is recurrent in the compact-open topology we may choose $\tau>0$ such that
\begin{equation}\sup_{t \in [0,T_3]} d(\phi(t),\phi(t+\tau))<\varepsilon,\qquad \sup_{t \in [0,T_3]} d(\phi(t+\tau),x_1)<\delta.\end{equation}
Define $T_4\coloneq \tau$ and $T_5\coloneq T_3+\tau$, and note that $\phi(T_4) \in \Int D_1$, $\phi(T_5) \in \Int(D_2)$ and $\phi([T_4,T_5]) \cap D_3 =\emptyset$. Since $x_1 \notin \phi([T_2,T_5])$ we may choose $\kappa \in (0,\varepsilon)$ small enough that $\overline{B_\kappa(x_1)}$ is disjoint from $\phi([T_2,T_5])$. Now define $T_1$ to be the latest time  $t\in (0,T_2)$ such that $d(\phi(t),x_1)=\kappa$, and let $T_6$ be the earliest time $t>T_5$ at which $d(\phi(t),x_1)=\kappa$, the existence of which  is guaranteed by recurrence. Define $\psi \colon [0,T_6] \to S^2$ by $\psi(t)=\phi(t)$ for all $t \in [T_1,T_6]$, and by taking $\psi$ on $[0,T_1]$ to be an injective path along $\partial B_\kappa(x_1)$ which begins at $\phi(T_6)$ and ends at $\phi(T_1)$. Since by construction $d(\phi(t),x_1)\neq \kappa$ for all $t \in (T_1,T_6)$, the function $\psi$ thus defined is injective, continuous, and satisfies $\psi(0)=\psi(T_6)$ and  $\psi([T_4,T_5]) \cap D_3 =\emptyset$.

We now identify $[0,T_6]$ with $S^1$ by identifying $0$ with $T_6$ and view $\psi$ as a continuous injection $S^1 \to S^2$ in the obvious manner. It follows from Proposition~\ref{pr:three-discs-lemma} that there exists an interval $[a,b]\subset S^1$ such that $\psi(a) \in \Int D_2$, $\psi(b)\in \Int D_1$ and $\psi([a,b]) \cap D_3=\emptyset$. Since $\psi(t) \in \Int D_1$ for all $t \in [0,T_1]$ we cannot have $a \in [0,T_1]$, and since $\phi(t) \in \Int D_1$ for all $t$ in a small neighbourhood of $T_6$, by moving the right endpoint of $[a,b]$ leftwards if necessary we may assume without loss of generality that $b$ also does not lie in $[0,T_1]$. Thus $[a,b]\subset (T_1,T_6)$ and in particular $\psi(t)=\phi(t)$ for all $t \in [a,b]$. The properties $\psi(a) \in \Int D_2$, $\psi(b)\in \Int D_1$ and $\psi([a,b]) \cap D_3=\emptyset$ directly assert that
\begin{equation}d(\psi(a),x_2)<\varepsilon, \qquad d(\psi(b),x_1)<\varepsilon, \qquad \sup_{t \in [a,b]} d(\psi(t),x_1) <\delta\end{equation} 
and the same inequalities therefore hold for $\phi$ in place of $\psi$. Taking  $t_1\coloneq 0$, $t_2\coloneq T_3$, $t_3\coloneq a$, $t_4\coloneq b$ proves the theorem.

\subsection{Proof of Theorem~\ref{th:stronger}} \label{ss:proof_stronger}

While Theorem~\ref{th:core} is the main ingredient in the proof of Theorem~\ref{th:stronger}, we will also need a couple of extra preparations. The following two simple lemmas are valid for switched Bebutov shifts taking values in any compact metrisable space $Z$, and compensate for the weakness of the axioms specified in Definitions~\ref{de:Bebutov}--\ref{de:switched}. The first lemma provides a form of switching between an infinite sequence of trajectories:

\begin{lemma}\label{le:multiple_switching}
Let $\mathfrak{X}$ be a $Z$-valued switched Bebutov shift.
Let $(\psi_n)_{n=1}^\infty$ be a sequence in $\mathfrak{X}$.
Assume there exists a strictly increasing and unbounded real sequence $(\tau_n)_{n=0}^\infty$ with $\tau_0 = 0$ such that
\begin{equation}\label{eq:matching}
\psi_n(\tau_n) = \psi_{n+1}(\tau_n) \quad \text{for each $n$.}
\end{equation}
Let $\xi \colon [0, \infty) \to Z$ be the function that coincides with $\psi_n$ on the interval $[\tau_{n-1},\tau_{n}]$, for each $n \ge 1$.
Then $\xi \in \mathfrak{X}$.
\end{lemma}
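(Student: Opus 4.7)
The plan is to build $\xi$ as a limit of finite ``truncated'' switchings. The switching axiom \eqref{it:concatenation} of Definition~\ref{de:switched} only allows us to splice two elements of $\mathfrak{X}$ at a common time, so the first task is to iterate this operation so as to splice finitely many of the $\psi_n$ together; the second task is to extract an infinite switching by invoking the compactness axiom \eqref{it:compactness} of Definition~\ref{de:Bebutov}.

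First I would construct, by induction on $N \geq 1$, a sequence of functions $\eta_N \in \mathfrak{X}$ such that $\eta_N$ coincides with $\xi$ on the interval $[0,\tau_N]$. Set $\eta_1\coloneq \psi_1$. Given $\eta_N \in \mathfrak{X}$ with $\eta_N|_{[0,\tau_N]}=\xi|_{[0,\tau_N]}$, the matching condition \eqref{eq:matching} together with the definition of $\xi$ gives
\begin{equation}
\eta_N(\tau_N)=\xi(\tau_N)=\psi_N(\tau_N)=\psi_{N+1}(\tau_N),
\end{equation}
so axiom \eqref{it:concatenation} applied to $\eta_N$ and $\psi_{N+1}$ with splicing time $\tau_N$ produces an element $\eta_{N+1} \in \mathfrak{X}$ which equals $\eta_N$ on $[0,\tau_N]$ and equals $\psi_{N+1}$ on $[\tau_N,\infty)$. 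By the inductive hypothesis and the definition of $\xi$ on $[\tau_N,\tau_{N+1}]$, this $\eta_{N+1}$ coincides with $\xi$ on $[0,\tau_{N+1}]$ as required.

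Next I would take the limit. Since $\mathfrak{X}$ is compact in the compact-open topology, the sequence $(\eta_N)_{N=1}^\infty \subset \mathfrak{X}$ has a subsequence $(\eta_{N_k})_{k=1}^\infty$ converging to some $\eta \in \mathfrak{X}$ uniformly on every compact subset of $[0,\infty)$. Given any $T>0$, the sequence $(\tau_N)$ is unbounded, so for all sufficiently large $k$ we have $\tau_{N_k}\geq T$, and hence $\eta_{N_k}|_{[0,T]}=\xi|_{[0,T]}$. Passing to the uniform limit on $[0,T]$ yields $\eta|_{[0,T]}=\xi|_{[0,T]}$. Since $T$ was arbitrary, $\eta=\xi$ on all of $[0,\infty)$, proving $\xi\in\mathfrak{X}$.

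I do not anticipate any serious obstacle: the only subtlety is that axiom \eqref{it:concatenation} does not itself allow an infinite concatenation, which is exactly what the compactness axiom is invoked to supply.
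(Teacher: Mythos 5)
Your proposal is correct and takes essentially the same route as the paper: inductively splice the $\psi_n$ using the switching axiom to obtain $\eta_N \in \mathfrak{X}$ agreeing with $\xi$ on $[0,\tau_N]$, then pass to the limit via compactness. The only cosmetic difference is that you extract a convergent subsequence and then identify its limit, whereas the paper observes directly that $\eta_N \to \xi$ in the compact-open topology (since $\eta_N$ agrees with $\xi$ on $[0,\tau_N]$ and $\tau_N\to\infty$) and invokes compactness merely for closedness of $\mathfrak{X}$; both are fine.
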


\begin{proof}
An inductive application of the switching axiom in Definition~\ref{de:switched} shows that for each $n\ge 1$, there exists $\xi_n \in \mathfrak{X}$ which coincides with $\xi$ on $[0,\tau_n]$ and coincides with $\psi_{n+1}$ on $[\tau_n,\infty)$. Note that $\xi_n$ tends to $\xi$ in the compact-open topology. The compactness axiom in Definition~\ref{de:Bebutov} now guarantees that $\xi \in \mathfrak{X}$.
\end{proof}

The shift-invariance property in Definition~\ref{de:switched} permits us only to translate functions leftwards and not rightwards. The next lemma remediates this difficulty, under the hypothesis of recurrence.

\begin{lemma}\label{le:inverse_branches}
Let $\mathfrak{X}$ be a $Z$-valued switched Bebutov shift, and let $\phi \in \mathfrak{X}$ be recurrent with respect to the shift semiflow $(\sigma_t)_{t \ge 0}$. Then for any $s \ge 0$ there exists $\psi \in \mathfrak{X}$ such that $\sigma^s \psi = \phi$.
\end{lemma}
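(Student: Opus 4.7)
The plan is to use recurrence to approximate the desired $\psi$ by backwards shifts of $\phi$ that already live in $\mathfrak{X}$, and then extract a subsequential limit via the compactness axiom. Notably, the switching property will not even be required here: the only structural features of $\mathfrak{X}$ that enter are the compactness axiom (i)~and the shift-invariance axiom (ii) of Definition~\ref{de:Bebutov}.

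By recurrence of $\phi$ under the shift semiflow, there exists a sequence of real numbers $\tau_n \to +\infty$ such that $\sigma^{\tau_n}\phi \to \phi$ in the compact-open topology on $C([0,\infty),Z)$. After discarding finitely many terms we may assume $\tau_n \geq s$ for every $n$. The shift-invariance axiom then lets us define
\begin{equation}
\psi_n \coloneq \sigma^{\tau_n - s}\phi \in \mathfrak{X},
\end{equation}
and by construction $\sigma^s \psi_n = \sigma^{\tau_n}\phi$.

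By the compactness of $\mathfrak{X}$ with respect to the compact-open topology, the sequence $(\psi_n)$ admits a subsequence $(\psi_{n_k})$ which converges to some $\psi \in \mathfrak{X}$. Since the shift semiflow $(\sigma^t)_{t \geq 0}$ is continuous on $\mathfrak{X}$, applying $\sigma^s$ to both sides and passing to the limit gives
\begin{equation}
\sigma^s \psi = \lim_{k \to \infty} \sigma^s \psi_{n_k} = \lim_{k \to \infty} \sigma^{\tau_{n_k}}\phi = \phi,
\end{equation}
which is exactly what is required. There is no substantial obstacle in this argument; the only point to watch is ensuring $\tau_n \geq s$ so that the backward shift $\sigma^{\tau_n - s}\phi$ is well-defined as an element of $\mathfrak{X}$ via axiom (ii).
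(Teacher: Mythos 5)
Your proof is correct and follows essentially the same route as the paper's: use recurrence to obtain times $\tau_n\to\infty$ with $\sigma^{\tau_n}\phi\to\phi$, shift backwards by $s$ to get $\sigma^{\tau_n-s}\phi\in\mathfrak{X}$, extract a convergent subsequence by compactness, and apply continuity of $\sigma^s$ to conclude. Your observation that the switching axiom is not needed is accurate and worth noting.
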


\begin{proof}
By assumption, there exists a sequence $(t_n)_{n=1}^\infty$ increasing to infinity such that $\sigma^{t_k} \phi$ converges to $\phi$ in the compact-open topology. By passing to a subsequence, we can assume that $t_1 \ge s$ and that $\sigma^{t_k - s} \phi$ converges to some $\psi \in \mathcal{X}$. Then $\sigma^s \psi = \phi$.
\end{proof}

We now commence the proof of Theorem~\ref{th:stronger}. 
We will first consider the case of a non-injective trajectory $\phi$.
In this case, the topological hypotheses on $Z$ are superfluous.

\begin{proof}[Proof of Theorem~\ref{th:stronger} in the non-injective case]
Suppose $\phi$ is a recurrent and non-injective trajectory of the Bebutov shift $\mathfrak{X}$.  
Let $T \ge 0$ and $L > 0$ be such that $\phi(T)=\phi(T+L)$.
Let $\psi_1 \coloneq \phi$.
For each $n \ge 2$, using Lemma~\ref{le:inverse_branches} we find $\psi_n \in \mathfrak{X}$ such that $\sigma^{(n-1)L} \psi_n = \phi$. Let $\tau_n \coloneq T + nL$ for $n \ge 1$. Then condition~\eqref{eq:matching} in Lemma~\ref{le:multiple_switching} is satisfied. An application of that lemma produces a preperiodic $\chi \in \mathfrak{X}$ such that $\chi(0) = \phi(0)$.
\end{proof}

We will first complete the proof of Theorem~\ref{th:stronger} in the case where $Z$ is homeomorphic to a subset of the sphere, and then apply a lifting argument to derive the case in which $Z$ is homeomorphic to a subset of the projective plane. In these arguments it is clearly sufficient to assume that $Z$ is precisely equal to a compact subset of the sphere or of the projective plane respectively.

\begin{proof}[Proof of Theorem~\ref{th:stronger} for subsets of the sphere]
Let $\mathfrak{X}$ be a switched Bebutov shift with values in a compact set $Z\subseteq S^2$, and fix a recurrent element $\phi \in \mathfrak{X}$ throughout the proof. The case of non-injective $\phi$ was already dealt with, so assume that $\phi$ is injective. Our goal then is to prove that the constant function which takes only the value $\phi(0)$ is an element of $\mathfrak{X}$. We argue by contradiction: suppose that the constant $x_1 \coloneq \phi(0)$ is not in $\mathfrak{X}$. Since $\mathfrak{X}$ is compact with respect to the compact-open topology, it follows that there exists numbers $r>0$ and $\delta>0$ with the following property:
\begin{equation}\label{eq:non_constant}
\text{for all $\psi \in \mathfrak{X}$,} \quad \max_{t \in [0,r]}  d(\psi(t), x_1) > \delta \, .
\end{equation}

We first claim that for every $a \ge 0$, there exists $\psi \in \mathfrak{X}$ and $p \in (0,r)$ such that 
\begin{equation}\label{eq:first_claim}
\psi(a) = \psi(a+p) = x_1 \quad \text{and} \quad \max_{t \in [a,a+p]} d(\psi(t),x_1) \le \delta \, .
\end{equation}
Indeed, let $x_1\coloneq \phi(0)$ and $x_2 \in S^2$ be the distinct points given by Theorem~\ref{th:core} applied to the function $\phi$ and the real number $\delta$. By Theorem~\ref{th:core}, for every $n \geq 1$ there exist subintervals $[t_{1,n}, t_{2,n}]$ and $[t_{3,n}, t_{4,n}]$ of $[0,\infty)$ such that
\begin{alignat}{2}
d(\phi(t_{1,n}),x_1) &< \frac{1}{n}, &\qquad d(\phi(t_{2,n}),x_2) &< \frac{1}{n} \, ,\\
d(\phi(t_{3,n}),x_2) &< \frac{1}{n}, &\qquad d(\phi(t_{4,n}),x_1) &< \frac{1}{n} \, ,
\end{alignat}
and
\begin{equation}
\max_{t \in [t_{1,n}, t_{2,n}] \cup [t_{3,n}, t_{4,n}]} d(\phi(t),x_1) < \delta \, ,
\end{equation}
and using the recurrence of $\phi$ we assume without loss of generality that $t_{1,n} \geq a$ and $t_{3,n} \geq a+r$ for every $n \geq 1$. 
Property~\eqref{eq:non_constant} implies that $t_{4,n}-t_{3,n} <r$ and $t_{2,n}-t_{1,n} < r$ for every $n \geq 1$.
Translating $\phi$ by $t_{1,n}-a$ it follows that for every $n \geq 1$ we may find $\psi'_n \in \mathfrak{X}$ such that 
\begin{equation}
d\left(\psi'_n(a), x_1\right)<\frac{1}{n},\qquad \min_{t \in [a,a+r]} d\left(\psi'_n(t), x_2\right)<\frac{1}{n} \, .
\end{equation}
Fix an accumulation  point $\psi' \in \mathfrak{X}$  of $(\psi'_n)_{n=1}^\infty$ and note that $\psi'(a)=x_1$ and $\psi'(a+t')=x_2$ for some $t' \in [0,r]$, and additionally $\psi'(t) \in \overline{B_\delta(x_1)}$ for all $t \in [a,a+t']$. Obviously $t' \neq 0$ since the points $x_1$ and $x_2$ are distinct. Translating $\phi$ by $t_{3,n}-a-t'$ we may likewise for every $n \geq 1$ find $\psi''_n \in \mathfrak{X}$ such that 
 \begin{equation}d\left(\psi''_n(a+t'), x_2\right)<\frac{1}{n},\qquad \min_{t \in [a+t',a+t'+r]} d\left(\psi''_n(t), x_1\right)<\frac{1}{n}\end{equation}
and take an accumulation point $\psi'' \in \mathfrak{X}$ satisfying $\psi''(a+t')=x_2$, $\psi''(a+t'+t'')=x_1$ for some $t'' \in (0,r]$, and  $\psi''(t) \in \overline{B_\delta(x_1)}$ for all $t \in [a+t', a+t'+t'']$. By the switching axiom, the function $\psi$ that coincides with $\psi'$ on $[0,a+t']$ and with $\psi''$ on $[a+t',\infty)$ is an element of $\mathfrak{X}$. Letting $p \coloneq t' + t''$, it is now immediate that $\psi$ satisfies properties \eqref{eq:first_claim} -- note that $p<r$ as a consequence of \eqref{eq:non_constant}. The first claim is now proved.

Our second claim is that there exists a sequence $(\xi_n)_{n=1}^\infty$ in $\mathfrak{X}$ and an  strictly increasing and unbounded sequence $(\tau_n)_{n=0}^\infty$ with $\tau_0 = 0$ such that, for each $n$,
\begin{equation}\label{eq:second_claim}
\xi_n(\tau_n) = \xi_n(\tau_{n+1}) = x_1  \quad \text{and} \quad \max_{t \in [\tau_n,\tau_{n+1}]} d(\xi_n(t),x_1) \le \delta \, .
\end{equation}
Indeed, for each integer $k \ge 1$, let $a_k \coloneq kr$ and use the previous claim to find $\psi_k \in \mathfrak{X}$ and $p_k \in (0,r)$ such that 
\begin{equation}\label{eq:psi_k}
\psi_k(a_k) = \psi_k(a_k+p_k) = x_1 \quad \text{and} \quad \max_{t \in [a_k,a_k+p_k]} d(\psi_k(t),x_1) \le \delta \, .
\end{equation}
Let $\ell_k \coloneq \lfloor r / p_k \rfloor$; this is a positive integer.
Note that $r/2 < \ell_k p_k \le r$.
Now define a real sequence $(\tau_n)_{n=0}^\infty$ by the rules: $\tau_0 = 0$ and 
\begin{equation}
\tau_{n+1} - \tau_n = p_k \quad \text{if} \quad \ell_1 + \cdots + \ell_{k-1} < n \le \ell_1 + \cdots + \ell_k \, .
\end{equation}
Then the sequence $(\tau_n)$ is strictly increasing.
Since $\tau_{\ell_1+\cdots+\ell_k} =  \ell_1 p_1 + \cdots + \ell_k p_k \ge kr/2$, the sequence $(\tau_n)$ is also unbounded.  
Next, for each integer $n \ge 1$, fix $k \ge 0$ such that $\ell_1 + \cdots + \ell_{k-1} < n \le \ell_1 + \cdots + \ell_k$. Since $\tau_n \le \ell_1 p_1 + \cdots + \ell_k p_k \le kr = a_k$, we can define $\xi_n \coloneq \sigma^{a_k - \tau_n} \psi_k \in \mathfrak{X}$. In particular, $\xi_n(\tau_n+t) = \psi_k(a_k+t)$ for every $t \ge 0$. Now \eqref{eq:second_claim} follows directly from properties \eqref{eq:psi_k}, proving the second claim. 

Finally, we apply the multiple switching Lemma~\ref{le:multiple_switching} to concatenate all segments $\xi_n|_{[\tau_n,\tau_{n+1}]}$, obtaining a path $\xi \colon [0,\infty) \to Z$ in $\mathfrak{X}$ starting from $\xi(0) = x_1$ and entirely contained in the closed $\delta$-neighborhood of $x_1$. The existence of this path contradicts property \eqref{eq:non_constant}. 
This completes the proof in the case where $Z$ is homeomorphic to a compact subset of $S^2$.
\end{proof}

\begin{proof}[Proof of Theorem~\ref{th:stronger} for subsets of the projective plane]
Let $\mathfrak{X}$ be a switched Bebutov shift with values in a compact set $Z\subseteq \RP^2$, let $\pi \colon S^2 \to \RP^2$ be the standard twofold covering map, and define $\Pi \colon C([0,\infty), S^2) \to C([0,\infty), \RP^2)$ by $\Pi(\hat\psi)\coloneq \pi \circ \hat\psi$. A straightforward verification using the path-lifting lemma shows that $\Pi$ is a twofold covering map, so in particular it is a proper map and therefore  $\hat{\mathfrak{X}}\coloneq \Pi^{-1}\mathfrak{X}\subseteq C([0,\infty),S^2)$ is  compact. Trivially $\hat{\mathfrak{X}}$ is shift-invariant and another easy application of the path-lifting lemma shows that $\hat{\mathfrak{X}}$ inherits the switching property from $\mathfrak{X}$, so $\hat{\mathfrak{X}}$ is a switched Bebutov shift. Given a recurrent element $\phi \in \mathfrak{X}$, choose $\hat\phi \in \Pi^{-1}\phi$ arbitrarily. We claim that $\hat\phi$ is recurrent. Indeed, if $(t_n)_{n=1}^\infty$ tends to infinity and satisfies $\lim_{n \to \infty}\sigma^{t_n}\phi=\phi$ then by compactness it has a subsequence $(t_{n_k})_{k=1}^\infty$ such that $(\sigma^{t_{n_k}}\hat\phi)$ is convergent. The limit of this subsequence necessarily projects to $\phi$ and hence is either equal to $\hat\phi$ itself or to the function $-\hat\phi$ whose values are everywhere  antipodal to those of $\hat\phi$. In the former case the claim is immediately proved; in the latter case we note that $-\hat\phi \in \omega(\hat\phi)$ implies $\hat\phi \in \omega(-\hat\phi)\subseteq \omega(\hat\phi)$ and the claim is again proved. Applying the already-proved first case of Theorem~\ref{th:stronger} to $\hat\phi$ we obtain a preperiodic function $\hat\chi \in \hat{\mathfrak{X}}$ such that $\hat\chi(0)=\hat\phi(0)$. Clearly $\chi\coloneq \Pi (\hat\chi)$ is preperiodic and satisfies $\chi(0)=\phi(0)$ as required. If additionally $\phi$ is injective then clearly so is $\hat\phi$, hence $\hat\chi$ is constant and so too is $\chi$. The proof of Theorem~\ref{th:stronger} is complete.
\end{proof}

\subsection{Some Questions}\label{ss:Bebutov-questions}

Our motivation for Theorem~\ref{th:stronger} comes from the applications that we will discuss in the next section. Nevertheless, the theorem is interesting in its own right, and several natural questions arise. For instance, \emph{can the preperiodic element $\chi$ in Theorem~\ref{th:stronger} always be chosen to be periodic?}

With the aim of broadening the study of Bebutov shifts beyond the switching property, it would be worthwhile to investigate the class of all minimal Bebutov shifts $\mathfrak{X}$ in $C([0,\infty),S^2)$ (or in $C(\mathbb{R},S^2)$) that are \emph{self-avoiding}, meaning they consist entirely of injective trajectories. Examples of such shifts were seen in Remark~\ref{re:Plykin}. Slightly more intricate examples can be constructed by introducing bifurcations. Bearing in mind traditional classification problems in dynamics \cite{Foreman2020}, we pose the following question: \emph{Can we classify all self-avoiding minimal Bebutov shifts on the sphere, up to topological conjugacy?}

\section{Applications}\label{se:app}

\subsection{A Poincar\'e-Bendixson theorem for semiflows}

An extensive literature exists generalising the original Poincar\'e-Bendixson Theorem of \cite{Be01,Po80} and this topic is the subject of the survey article \cite{Ci12}. The following statement appears nonetheless to be novel in several respects:
\begin{theorem}\label{th:pbsemi}
Let $Z$ be a topological space which is homeomorphic to a compact subset of either $S^2$ or $\RP^2$, and let $\Phi \colon Z \times [0,\infty) \to Z$ be a continuous semiflow. If $z \in Z$ is recurrent with respect to $\Phi$ then it is either a fixed point or a periodic orbit. 
\end{theorem}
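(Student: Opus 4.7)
The plan is to reduce Theorem~\ref{th:pbsemi} to Theorem~\ref{th:stronger} by associating to the semiflow a natural switched Bebutov shift. Given the continuous semiflow $\Phi$ on $Z$, for each $w \in Z$ define $\phi_w \in C([0,\infty), Z)$ by $\phi_w(t) \coloneq \Phi(w,t)$, and let
\begin{equation}
\mathfrak{X} \coloneq \{\phi_w \colon w \in Z\} \subseteq C([0,\infty),Z).
\end{equation}
First I would verify that $\mathfrak{X}$ is a switched Bebutov shift with values in $Z$. Compactness follows because the map $w \mapsto \phi_w$ is continuous from the compact space $Z$ into the compact-open topology (this is exactly the continuity of $\Phi$). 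Shift-invariance holds because the semigroup property $\Phi(\Phi(w,\tau),s) = \Phi(w,\tau+s)$ gives $\sigma^\tau \phi_w = \phi_{\Phi(w,\tau)} \in \mathfrak{X}$. The switching axiom is automatic: if $\phi_{w_1}(\tau) = \phi_{w_2}(\tau)$, then the semigroup property forces $\phi_{w_1}(\tau+s) = \phi_{w_2}(\tau+s)$ for every $s \geq 0$, so the concatenated function in Definition~\ref{de:switched} coincides with $\phi_{w_1}$ itself.

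Next, I would observe that if $z$ is recurrent for $\Phi$ then $\phi_z$ is recurrent for the shift semiflow on $\mathfrak{X}$: if $t_n \to \infty$ and $\Phi(z,t_n) \to z$, then by continuity of $\Phi$, $(\sigma^{t_n}\phi_z)(s) = \Phi(\Phi(z,t_n),s) \to \Phi(z,s) = \phi_z(s)$ uniformly on compact sets. Applying Theorem~\ref{th:stronger} produces a preperiodic element $\chi \in \mathfrak{X}$ with $\chi(0) = \phi_z(0) = z$. Since every element of $\mathfrak{X}$ is completely determined by its value at $0$ (again by the semigroup property), we must have $\chi = \phi_z$. Thus $\phi_z$ is eventually periodic: there exist $T \geq 0$ and $L > 0$ with $\Phi(z,T+L) = \Phi(z,T)$, and a one-line induction using the semigroup property gives $\Phi(z,t+L) = \Phi(z,t)$ for all $t \geq T$.

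The final step is to upgrade eventual periodicity to honest periodicity using recurrence. Take any sequence $t_n \to \infty$ with $\Phi(z,t_n) \to z$; for $n$ large, $t_n \geq T$, so writing $t_n = T + k_n L + s_n$ with $s_n \in [0,L)$ and passing to a subsequence so that $s_n \to s \in [0,L]$, the periodicity on $[T,\infty)$ gives $\Phi(z, T+s) = \lim_n \Phi(z,t_n) = z$. Setting $p \coloneq T+s$ (noting $p > 0$ whenever $T > 0$ or $s > 0$, and otherwise $p = L > 0$), this means $\Phi(z,p) = z$, and the semigroup property then yields $\Phi(z,t+p) = \Phi(z,t)$ for all $t \geq 0$. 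Thus the orbit of $z$ is either a fixed point (if $\Phi(z,t) = z$ for all $t$) or a genuine periodic orbit. If one wishes to obtain the cleaner dichotomy directly, one may alternatively argue by cases based on whether $\phi_z$ is injective: the injective case of Theorem~\ref{th:stronger} already delivers a constant $\chi$, forcing $z$ to be a fixed point, while in the non-injective case $\phi_z(t_1) = \phi_z(t_2)$ with $t_1 < t_2$ makes $\phi_z$ eventually periodic and recurrence then produces a genuine period as above.

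I do not expect a serious obstacle here: once $\mathfrak{X}$ is in place, the deep content of the proof is absorbed by Theorem~\ref{th:stronger}, and the only work left is the bookkeeping that converts eventual periodicity of $\phi_z$ into periodicity of the orbit of $z$. The mildest delicate point is recognising that the switching axiom is automatic for semiflows (as opposed to differential inclusions), which is what makes the reduction clean.
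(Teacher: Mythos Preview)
Your proposal is correct and follows essentially the same route as the paper: both build the Bebutov shift $\mathfrak{X}=\{\phi_w : w\in Z\}$, verify the three axioms (with the switching axiom being vacuous for a genuine semiflow), transfer recurrence of $z$ to recurrence of $\phi_z$, apply Theorem~\ref{th:stronger}, and use that $\chi(0)=z$ forces $\chi=\phi_z$. The only cosmetic difference is that the paper packages the last step as ``$\Pi$ is a topological conjugacy, so $z$ is preperiodic, and a preperiodic recurrent point is periodic,'' whereas you unwind that one-line fact by hand.
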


\begin{remark} 
The earliest extensions of the Poincar\'e-Bendixson theorem (such as the theorem of A.J.~Schwartz in \cite{Sc63}, which considers the problem on arbitrary compact $2$-manifolds) operate in the context of differentiable flows which arise from an underlying vector field which is itself assumed at least to be $C^1$. In the context of general continuous flows, a number of results treat the case in which $Z$ is replaced by an open subset of $S^2$: these include for example a theorem of P.~Seibert and P.~Tulley\footnote{Although the precise hypotheses of Seibert and Tulley's result are not explicitly stated, their argument relies on the fact that every point along a recurrent trajectory of the flow must have a neighbourhood which is a topological disc, and also that the images and preimages of these topological discs under the flow must themselves be topological discs. In particular the flow must be defined on an open neighbourhood of the recurrent trajectory being studied, and also cannot be assumed only to be a semiflow.} in \cite{SeTu67} which treats flows on open subsets of $\R^2$, and the work of O.~H\'ajek in \cite{Ha68} which applies to flows defined on $2$-manifolds for which a suitable analogue of the Jordan curve theorem is satisfied. In the context of semiflows, the special case of Theorem~\ref{th:pbsemi} in which $Z$ is equal to either $\R^2$ or $S^2$ was proved by K.~Ciesielski in \cite{Ci94}. On the other hand the case of semiflows on proper subsets of $S^2$ other than the plane appears  not to have been treated until the present work, and more broadly we are aware of no antecedent works which treat either flows or semiflows on subsets of $S^2$ or $\RP^2$ with empty interior.
\end{remark}

\begin{proof}[Proof of Theorem~\ref{th:pbsemi}]
For every $z \in Z$ define a function $\phi_z \in C([0,\infty), Z)$ by $\phi_z(t)\coloneq \Phi^tz$ for all $t \geq 0$. Let $\mathfrak{X}\coloneq \{\phi_z \colon z \in Z\}\subseteq C([0,\infty), Z)$ and define $\Pi \colon Z \to \mathfrak{X}$ by $\Pi(z)\coloneq \phi_z$. It is clear that $\Pi$ is bijective. We claim  that $\Pi$ is also continuous. Indeed, to demonstrate this it is sufficient to show that if $(z_n)$ is a sequence in $Z$ which converges to a limit $z$ then $\lim_{n \to \infty} \phi_{z_n} = \phi_z$ with respect to the compact-open topology; but if $T \geq 0$ is arbitrary then the uniform continuity of $\Phi$ on $Z \times [0,T]$ guarantees that $\phi_{z_n} \to \phi$ uniformly on $[0,T]$. Since $T$ was arbitrary the required convergence follows. We immediately deduce from the continuity of $\Pi$ that $\mathfrak{X}$ is compact, and $\mathfrak{X}$ is clearly also invariant with respect to the shift semiflow $(\sigma^\tau)_{\tau \geq 0}$ since $\sigma^\tau \phi_z = \phi_{\Phi^\tau z} \in \mathfrak{X}$ for every $z \in Z$ and $\tau \geq 0$. If $\phi_{z_1}, \phi_{z_2} \in \mathfrak{X}$ satisfy $\phi_{z_1}(\tau)=\phi_{z_2}(\tau)$ for some $\tau \geq 0$ then the function $\phi \in C([0,\infty),Z)$ defined by $\phi(t)\coloneq \phi_{z_1}(t)$ for all $t \leq \tau$ and by $\phi(t)\coloneq \phi_{z_2}(t)$ for all $t \geq \tau$ is equal to $\phi_{z_1}$ and in particular belongs to $\mathfrak{X}$. We have shown in particular that $\mathfrak{X}$ is a switched Bebutov shift.
Since $\Pi$ is a continuous bijection between compact Hausdorff spaces it is a homeomorphism, and we clearly have $\Pi \circ \Phi^t = \sigma^t \circ \Pi$ for every $t \geq 0$. In particular the semiflow $(\Phi^t)_{t \geq 0}$ on $Z$ is topologically conjugate to the shift semiflow $(\sigma^t)_{t\geq 0}$ on $\mathfrak{X}$.

Now suppose that $z \in Z$ is recurrent; then by topological conjugacy, $\phi_z \in \mathfrak{X}$ is recurrent with respect to the shift semiflow on $\mathfrak{X}$. It follows by Theorem~\ref{th:stronger} that there exists a preperiodic function $\chi=\phi_w \in \mathfrak{X}$ such that $\phi_w(0)=\phi_z(0)$; but this implies that $z=w$, so $\phi_z=\phi_w$ and therefore $\phi_z$ is preperiodic with respect to the shift semiflow on $\mathfrak{X}$. It follows by topological conjugacy that $z$ is preperiodic. Since a preperiodic recurrent point must be periodic, the proof is complete.
\end{proof}

\subsection{Periodic limit trajectories for switched continuous vector fields}

The next consequence of Theorem~\ref{th:stronger} which we exhibit is a version of the Poincar\'e-Bendixson theorem for switched continuous vector fields on the $2$-sphere and the projective plane. While this is certainly the least interesting of the applications presented here, we mention it in passing since it may be derived straightforwardly from a technical result which we will require when proving the more substantial results to follow.

\begin{theorem}\label{th:cts}
Let $M$ be either $S^2$ or $\RP^2$, let $Z\subseteq M$ be compact and for $i=1,\ldots,N$ let $\Psi_i \colon Z \to TM$ be a continuous function such that $\Psi_i(z)\in T_zM$ for every $z \in Z$. Suppose that there exists an absolutely continuous function $x \colon [0,\infty) \to Z$ which satisfies $\dot{x}(t) = \sum_{i=1}^N u_i(t) \Psi_i(x(t))$ almost everywhere for some Lebesgue measurable functions $u_1,\ldots,u_N \colon [0,\infty) \to [0,1]$ such that $\sum_{i=1}^N u_i(t)=1$ a.e. Then there exist periodic measurable functions $v_1,\ldots,v_N \colon [0,\infty) \to [0,1]$ satisfying $\sum_{i=1}^N v_i(t)=1$ a.e, and a periodic absolutely continuous function $y$ from $[0,\infty)$ to the $\omega$-limit set of $x$, such that $\dot{y}(t)=\sum_{i=1}^N v_i(t)\Psi_i(y(t))$ a.e. \end{theorem}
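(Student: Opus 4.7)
My plan for proving Theorem~\ref{th:cts} is to realise the set of all trajectories of the switched system as a $Z$-valued switched Bebutov shift on $Z$, and then apply the corollary of Theorem~\ref{th:stronger}. Concretely, I would let $\mathfrak{X}$ denote the collection of absolutely continuous functions $\phi \colon [0,\infty) \to Z$ for which there exist measurable $u_1,\dots,u_N \colon [0,\infty) \to [0,1]$ with $\sum_i u_i \equiv 1$ almost everywhere and $\dot\phi(t) = \sum_i u_i(t)\Psi_i(\phi(t))$ almost everywhere. By hypothesis $x \in \mathfrak{X}$.

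The main verification is that $\mathfrak{X}$ is a switched Bebutov shift. Fixing a smooth Riemannian metric on $M$, compactness of $Z$ gives a uniform bound $K \coloneq \max_i \sup_{z \in Z}\|\Psi_i(z)\|$, so every element of $\mathfrak{X}$ is $K$-Lipschitz with values in the compact set $Z$. Arzel\`a--Ascoli then produces a subsequential compact-open limit $\phi$ of any sequence $\phi_n \in \mathfrak{X}$, and to see that $\phi \in \mathfrak{X}$ I would run a standard Filippov-type closure argument: extracting a further subsequence, the switching laws $u^{(n)}$ converge weakly-$*$ in $L^\infty_{\mathrm{loc}}([0,\infty);\R^N)$ to a limit $u$ which still takes values in the standard simplex almost everywhere (this set being closed and convex), while $\Psi_i(\phi_n(\cdot)) \to \Psi_i(\phi(\cdot))$ uniformly on compact sets. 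Combining the strong convergence of $\Psi_i(\phi_n)$ with the weak-$*$ convergence of $u^{(n)}$ lets me pass to the limit in the integral form $\phi_n(t)-\phi_n(0) = \int_0^t \sum_i u^{(n)}_i(s)\Psi_i(\phi_n(s))\,ds$, so that $\phi$ is a trajectory with switching law $u$. Shift-invariance of $\mathfrak{X}$ is immediate, and the switching axiom follows because if $\phi_1,\phi_2 \in \mathfrak{X}$ satisfy $\phi_1(\tau)=\phi_2(\tau)$, the pointwise concatenation is again absolutely continuous and is a trajectory for the switching law obtained by splicing the two underlying laws at $\tau$.

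With $\mathfrak{X}$ a $Z$-valued switched Bebutov shift and $x \in \mathfrak{X}$, the corollary of Theorem~\ref{th:stronger} provides a periodic $y \in \mathfrak{X}$ whose values lie in $\omega(x)$. By the definition of $\mathfrak{X}$ there are measurable $v^0_i \colon [0,\infty)\to[0,1]$ summing to $1$ with $\dot y(t) = \sum_i v^0_i(t)\Psi_i(y(t))$ almost everywhere. Letting $p>0$ be a period of $y$, I would then define $v_i(t) \coloneq v^0_i\bigl(t - p\lfloor t/p\rfloor\bigr)$; these are measurable, $p$-periodic, and still take values in the simplex. Since $y$ is $p$-periodic, so is $\dot y$ (wherever the latter exists), and a direct check on each interval $[kp,(k+1)p)$ using $y(t)=y(t-kp)$ shows that $v_i$ is also a valid switching law for $y$, completing the proof.

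I expect the only real technical point to be the Filippov-type closure step used to verify compactness of $\mathfrak{X}$; everything else is routine verification of the axioms of Definitions~\ref{de:Bebutov}--\ref{de:switched} followed by a direct invocation of the corollary and an elementary periodisation of the switching law.
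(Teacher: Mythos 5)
Your proof is correct and arrives at the conclusion by a somewhat cleaner route than the paper. The paper first applies the Poincar\'e--Hopf theorem to dispose of the case in which $\omega(x)=S^2$ (producing a constant trajectory from a zero of some $\Psi_i$), and in the remaining case observes that the tail sets $Z_T\coloneq\overline{\{x(t):t\ge T\}}$ eventually lie in the complement of some closed disc, so that a chart identifies them with compact subsets of $\R^2$ to which Proposition~\ref{pr:aff} then applies; the periodic trajectory is obtained from Birkhoff recurrence plus Theorem~\ref{th:stronger} in that planar picture, and the $\RP^2$ case is handled by lifting. You instead verify the switched Bebutov shift axioms directly for the set of $Z$-valued trajectories and then invoke Corollary~2.5 uniformly, which delivers a periodic $y$ with values in $\omega(x)$ regardless of whether $\omega(x)$ is all of $S^2$; this eliminates the Poincar\'e--Hopf case split entirely and also handles $\RP^2$ without a separate lifting argument. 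The one place you should be slightly more careful is in the Filippov-type closure step: the passage to the limit in $\phi_n(t)-\phi_n(0)=\int_0^t\sum_i u_i^{(n)}(s)\Psi_i(\phi_n(s))\,ds$ requires a linear ambient structure, which a Riemannian metric alone does not provide; you should instead fix a smooth embedding $M\hookrightarrow\R^k$ (for $S^2$, just $S^2\subset\R^3$), view the $\Psi_i$ as $\R^k$-valued, and then either run the argument there or simply cite Proposition~\ref{pr:aff} with $d=k$, which applies verbatim and saves you the re-derivation. You also make explicit the periodisation of the switching law, a routine step the paper absorbs into a ``clearly''; that is worth keeping.
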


\begin{remark}
The un-switched case $N=1$ of Theorem \ref{th:cts} is rarely stated in the literature without further assumptions such as the unique integrability of the vector field, but it can be derived without difficulty from the results of \cite[{\S}VIII.4]{Ha64}.
\end{remark}

\begin{proposition}\label{pr:aff}
Let $Z\subset \R^d$ be compact and let  $\Psi_1,\ldots,\Psi_N \colon  Z \to \R^d$ be continuous. Let $\mathfrak{X}_Z$ denote the set of all absolutely continuous functions $x \colon [0,\infty) \to Z$ which solve a Carath\'eodory differential equation of the form
\begin{equation}\label{eq:ft}\dot{x}(t) = \sum_{i=1}^N u_i(t) \Psi_i(x(t))\text{ a.e.} \end{equation}
where $u_1,\ldots,u_N \colon [0,\infty) \to [0,1]$ are Lebesgue measurable functions such that $\sum_{i=1}^N u_i(t)=1$ a.e. Then $\mathfrak{X}_Z$ is a switched Bebutov shift.
\end{proposition}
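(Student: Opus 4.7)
The plan is to verify the three defining axioms of Definitions~\ref{de:Bebutov} and \ref{de:switched} in turn. Shift-invariance is immediate: if $x \in \mathfrak{X}_Z$ satisfies \eqref{eq:ft} with controls $(u_i)$, then for $\tau \ge 0$ the translate $(\sigma^\tau x)(t) = x(\tau+t)$ satisfies the analogous equation with the shifted controls $u_i(\tau + \cdot)$, which remain measurable functions into $[0,1]$ summing a.e.\ to $1$. For the switching axiom, given $\phi_1, \phi_2 \in \mathfrak{X}_Z$ with associated controls $(u_i^{(1)}), (u_i^{(2)})$ and $\phi_1(\tau) = \phi_2(\tau)$, I would set $v_i(t) \coloneq u_i^{(1)}(t)$ for $t \in [0,\tau]$ and $v_i(t) \coloneq u_i^{(2)}(t)$ for $t > \tau$. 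The resulting $v_i$ are measurable and sum to $1$ a.e., and the concatenated function $\psi$ is absolutely continuous on $[0,\infty)$ because $\phi_1$ and $\phi_2$ are each absolutely continuous and the values match at $\tau$, so $\psi \in \mathfrak{X}_Z$.

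The substantive part is compactness. Since $Z$ is compact and each $\Psi_i$ is continuous, there exists $M \coloneq \max_{1 \le i \le N} \sup_{z \in Z} \|\Psi_i(z)\|< \infty$. Any $x \in \mathfrak{X}_Z$ satisfies $\|\dot x(t)\| \le M$ a.e.\ and is therefore $M$-Lipschitz, so the family $\mathfrak{X}_Z$ is uniformly equicontinuous, and by Arzelà--Ascoli it is relatively compact in $C([0,\infty),Z)$ with respect to the compact-open topology. It remains to show that $\mathfrak{X}_Z$ is closed.

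To prove closedness, I would pass to the differential-inclusion formulation: \eqref{eq:ft} is equivalent to requiring $\dot x(t) \in F(x(t))$ a.e., where $F(z) \coloneq \mathrm{conv}\{\Psi_1(z),\ldots,\Psi_N(z)\}$ defines a compact-convex-valued and (Hausdorff-)continuous multifunction on $Z$. Let $(x_n) \subseteq \mathfrak{X}_Z$ converge to $x$ uniformly on compacta. On any fixed interval $[0,T]$ the bounded sequence $\dot{x}_n \in L^\infty([0,T], \R^d)$ has a weak-$*$ accumulation point which, because $x_n \to x$ uniformly, must coincide with $\dot x$. By Mazur's lemma, suitable convex combinations of the $\dot{x}_n$ converge in $L^1([0,T],\R^d)$ to $\dot x$, and by passing to a pointwise a.e.\ subsequence together with the upper semicontinuity and convexity of $F$ one obtains $\dot x(t) \in F(x(t))$ a.e.\ on $[0,T]$. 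Taking an exhausting sequence of intervals yields the same conclusion on $[0,\infty)$. To recover $x$ as a genuine element of $\mathfrak{X}_Z$, I would invoke a measurable-selection theorem (e.g.\ Filippov's implicit-function lemma applied to the closed-valued measurable map $t \mapsto \{(v_1,\ldots,v_N) \in \Delta^{N-1} : \sum_i v_i \Psi_i(x(t)) = \dot x(t)\}$, which is nonempty a.e.) to produce measurable $v_i \colon [0,\infty) \to [0,1]$ with $\sum_i v_i = 1$ a.e.\ and $\dot x(t) = \sum_i v_i(t)\Psi_i(x(t))$ a.e.

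The main obstacle is therefore the closure step: establishing that the limit of solutions to the Carathéodory equation is still a solution, rather than merely a solution of the associated inclusion. This is a standard closure theorem for differential inclusions with continuous compact-convex-valued right-hand side, but it does genuinely require the Mazur-lemma (or weak-$*$ compactness) machinery above, since in the absence of Lipschitz regularity of the $\Psi_i$ there is no uniqueness of solutions and no contraction-type argument is available. The measurable-selection step is routine once the inclusion is known to hold.
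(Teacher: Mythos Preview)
Your proof is correct, and the treatment of shift-invariance, switching, and the equicontinuity/Arzel\`a--Ascoli reduction matches the paper exactly. The closedness argument, however, is organised differently. You pass to the differential inclusion $\dot{x}\in F(x)$ with $F(z)=\mathrm{conv}\{\Psi_i(z)\}$, invoke Mazur's lemma to upgrade weak convergence of the derivatives to a.e.\ convergence of convex combinations, use upper semicontinuity of $F$ to close the inclusion, and then reconstruct admissible controls by a measurable-selection theorem. The paper instead extracts weak\nobreakdash-* limits of the \emph{controls} $u_i^{(n)}$ themselves in $L^\infty([0,\infty))$ and verifies directly, via an elementary four-term decomposition of $\int_0^T\dot{x}-\int_0^T\sum_i u_i\Psi_i(x)$, that the limit controls generate the limit trajectory. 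The paper's route is more self-contained---no Mazur lemma, no Filippov selection---and produces the limiting switching law explicitly; your route is the standard closure theorem for convex-valued differential inclusions and is arguably more conceptual, cleanly separating ``the limit solves the inclusion'' from ``the inclusion admits a measurable parametrisation''. Both are valid, and neither requires Lipschitz regularity of the $\Psi_i$.
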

\begin{proof}
If $\mathfrak{X}_Z$ is empty then the result holds trivially, so we suppose otherwise. The shift-invariance axiom of Definition~\ref{de:Bebutov} is elementary: if $x \in \mathfrak{X}_Z$ satisfies \eqref{eq:ft} then for every $\tau \geq 0$ the function $\sigma^\tau x \colon [0,\infty) \to Z$ defined by $(\sigma^\tau x) (t)\coloneq x(t+\tau)$ solves the same equation with the functions $v_1,\ldots,v_N \colon [0,\infty) \to [0,1]$ defined by $v_i(t)\coloneq u_i(t+\tau)$  in place of the functions $u_1,\ldots,u_N$. The switching axiom of Definition~\ref{de:switched} is also straightforward: given $x^{(1)}, x^{(2)} \in \mathfrak{X}_Z$  such that $x^{(1)}(\tau)=x^{(2)}(\tau)$ for some $\tau \geq 0$, suppose that $\dot{x}^{(1)}(t)=\sum_{i=1}^N u^{(1)}_i(t)\Psi_i(x(t))$ and $\dot{x}^{(2)}(t)=\sum_{i=1}^N u_i^{(2)}(t)\Psi_i(x(t))$ a.e.\ where $\sum_{i=1}^N u_i^{(j)}(t)=1$ a.e.\ for $j=1,2$. Define $x(t)\coloneq x^{(1)}(t)$ for $0 \leq t \leq \tau$ and $x(t)\coloneq x^{(2)}(t)$ for $t\geq \tau$, for each $i=1,\ldots,N$ define $u_i(t)\coloneq u^{(1)}_i(t)$ for $0 \leq t \leq \tau$ and $u_i(t)\coloneq u^{(2)}_i(t)$ for $t \geq \tau$, and observe that \eqref{eq:ft} is satisfied so that $x \in \mathfrak{X}_Z$ as required.

It remains to show that $\mathfrak{X}_Z$ is compact as a subspace of $C([0,\infty), Z)$ in the compact-open topology. Since the latter is metrisable it is sufficient to demonstrate sequential compactness. Define $K\coloneq \max_{1 \leq i \leq N} \max_{v \in Z}\|\Psi_i(v)\|$ and observe that for every $x \in \mathfrak{X}_Z$ and  $t_2 \geq t_1 \geq 0$ we have
\begin{equation}\left\|x(t_2)-x(t_1)\right\| = \left\|\int_{t_1}^{t_2} \dot{x}(s)ds\right\| \leq  K|t_2-t_1|\end{equation}
which implies that $\mathfrak{X}_Z$ is uniformly equicontinuous. In view of the Arzel\`a-Ascoli theorem, to demonstrate sequential compactness we must show that $\mathfrak{X}_Z$  is also closed in $C([0,\infty), Z)$. To this end, suppose that $(x^{(n)})_{n=1}^\infty$ is a sequence of elements of $\mathfrak{X}_Z$ which converges to a limit $x \in C([0,\infty), Z)$, and let us demonstrate that $x \in \mathfrak{X}_Z$.

 For every $n \geq 1$  let $u_1^{(n)},\ldots,u_N^{(n)} \colon [0,\infty) \to [0,1]$ be measurable functions such that the equations $\dot{x}^{(n)}(t) = \sum_{i=1}^N u_i^{(n)}(t) \Psi_i(x^{(n)}(t))$ and $\sum_{i=1}^N u^{(n)}_i(t)=1$ are satisfied a.e.\ for every $n \geq 1$.  We recall that the weak\nobreakdashes-* topology on $L^\infty([0,\infty))$ is defined to be the finest topology such that the map $u \mapsto \int_0^\infty u(t)f(t)dt$ is continuous for every $f \in L^1([0,\infty))$, and we recall also that with respect to the weak\nobreakdashes-* topology on $L^\infty([0,\infty))$ the set of measurable functions $[0,\infty) \to [0,1]$ is compact and metrisable (see e.g \cite[Theorems 3.15 and 3.16]{Ru91}).
Using sequential compactness of the set of measurable functions $[0,\infty) \to [0,1]$ in this topology, and by passing to a subsequence if necessary, we suppose that every $(u^{(n)}_i)_{n=1}^\infty$ converges in the weak\nobreakdashes-* topology as $n \to \infty$ to a measurable function $u_i \colon [0,\infty) \to [0,1]$. By integrating against characteristic functions of intervals and appealing to weak\nobreakdashes-* convergence it is clear that the equation $\sum_{i=1}^N u_i(t)=1$ a.e.\ is satisfied. 

To prove that $x \in \mathfrak{X}_Z$ we will show that $\dot{x}(t)=\sum_{i=1}^N u_i(t)\Psi_i(x(t))$ a.e. 
Via the Lebesgue differentation theorem it suffices to show that for every $T>0$, 
\begin{equation}\label{eq:wk-goal}\int_{0}^{T} \dot{x}(t) dt = \int_{0}^{T} \sum_{i=1}^N u_i(t)\Psi_i(x(t)) dt.\end{equation}
Fix $T>0$. 
For every $n \geq 1$ we may write
\begin{align}\label{eq:wk-diff}
\MoveEqLeft[4]{\int_{0}^{T} \dot{x}(t) dt - \int_{0}^{T} \sum_{i=1}^N u_i(t)\Psi_i(x(t)) dt} &\\
&= \int_{0}^{T} \dot{x}(t) dt - \int_{0}^{T} \dot{x}^{(n)}(t) dt \label{eq:wk1} \\
 &\qquad + \int_{0}^{T} \dot{x}^{(n)}(t)dt - \int_0^T \sum_{i=1}^N u_i^{(n)}(t) \Psi_i(x^{(n)}(t))dt \label{eq:wk2} \\
 &\qquad +  \int_0^T  \sum_{i=1}^N u_i^{(n)}(t)\left(\Psi_i\big(x^{(n)}(t)\big)-\Psi_i(x(t))\right)dt \label{eq:wk3} \\
 &\qquad + \int_0^T  \sum_{i=1}^N \left(u_i^{(n)}(t)-u_i(t)\right)\Psi_i\left(x(t)\right)dt \label{eq:wk4} \, .
\end{align}
The expression \eqref{eq:wk1} reduces to $x(T)-x^{(n)}(T)-x(0)+x^{(n)}(0)$ which obviously tends to zero as $n \to \infty$ since $x^{(n)} \to x$ in $\mathfrak{X}_Z$, and the term \eqref{eq:wk2} equals zero by definition of $x^{(n)}$. Since each $\Psi_i$ is uniformly continuous on $Z$, and since $x^{(n)} \to x$ uniformly on $[0,T]$, we have $\lim_{n \to \infty} \sup_{t \in [0,T]} \|\Psi_i(x^{(n)}(t))-\Psi_i(x(t))\|=0$ for every $i=1,\ldots,N$ so that the quantity \eqref{eq:wk3} tends to zero as $n \to \infty$.
Furthermore, the quantity \eqref{eq:wk4} also tends to zero, by appealing to the weak\nobreakdashes-* convergence of each $u_i^{(n)}$ to the corresponding $u_i$. By choosing $n$ sufficiently large we thus obtain an arbitrarily small bound for the difference \eqref{eq:wk-diff}.
We conclude that \eqref{eq:wk-goal} is satisfied for every $T>0$, so $\dot{x}(t)=\sum_{i=1}^N u_i(t)\Psi_i(x(t))$ a.e.\ and therefore $x \in \mathfrak{X}_Z$ as required. The proposition is proved.
\end{proof}

\begin{proof}[Proof of Theorem~\ref{th:cts}]
Suppose that $M=S^2$. If the $\omega$-limit set of $x$ is the whole of $S^2$ then in particular $Z=S^2$. It is a well-known consequence of the Poincar\'e-Hopf theorem that a continuous vector field on $S^2$ must vanish at some point, so in particular every $\Psi_i$ must have a fixed point somewhere in $S^2$ and there therefore exists a constant solution $y$ whose constant value trivially belongs to the $\omega$-limit set of $x$. Now suppose instead  that the $\omega$-limit set of $x$ is a proper subset of $S^2$. In this case, for every $T \geq 0$ define $Z_T\coloneq \overline{\{x(t)\colon t \geq T\}}$ and observe that for all large enough $T$ --- say, for all $T \geq T_0$ --- the sets $Z_T$ are all contained in the complement of some particular closed disc in $S^2$, and this complement is necessarily diffeomorphic to $\R^2$. By  Proposition~\ref{pr:aff} and the Birkhoff recurrence theorem the nonempty compact set $\mathfrak{X}_{\bigcap_{T\geq T_0}Z_T}=\bigcap_{T \geq T_0}\mathfrak{X}_{Z_T}$ contains a recurrent point with respect to the shift semiflow, so by Theorem~\ref{th:stronger} this set contains a periodic trajectory $y$. This concludes the argument in those cases where $Z$ is a subset of $S^2$.

If instead $M=\RP^2$ we apply a lifting argument. Let $\pi \colon S^2 \to \RP^2$ denote the standard twofold covering map and let $\hat{Z}\coloneq \pi^{-1}Z$. By the path-lifting lemma the trajectory $x \colon [0,\infty) \to Z$ lifts to a continuous function $\hat{x} \colon [0,\infty) \to \hat{Z}$ which is easily seen to be Lipschitz continuous and to satisfy a differential equation of the required form. We may now apply the first case to deduce the existence of a periodic trajectory in the $\omega$-limit set of $\hat{x}$ and this is projected by $\pi$ to the desired trajectory in the $\omega$-limit set of $x$.
\end{proof}

\subsection{Stability of switched homogeneous systems in up to three real dimensions}

If $\Psi_1,\ldots,\Psi_N \colon \R^d \to \R^d$ are continuous homogeneous functions then   the  notions of global uniform exponentially stability, Lyapunov stability and periodic asymptotic stability may be defined for the associated switched homogeneous system in precisely the same way as was described in the introduction for switched linear systems. The following result establishes a new relationship between these notions of stability, in the spirit of works such as \cite{An99,AnDeSo09,DaMa99,SoWa96}.

\begin{theorem}\label{th:homog}
Let $1 \leq d \leq 3$ and let $\Psi_1,\ldots,\Psi_N \colon \R^d \to \R^d$ be homogeneous and uniformly Lipschitz continuous. If the switched system on $\R^d$ defined by $\Psi_1,\ldots,\Psi_N$ is Lyapunov stable and periodically asymptotically stable, then it is globally uniformly exponentially stable.
\end{theorem}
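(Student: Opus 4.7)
The plan is to argue by contradiction: assume the switched homogeneous system on $\R^d$ (with $d\le 3$) is Lyapunov stable and periodically asymptotically stable but is \emph{not} globally uniformly exponentially stable, and from these assumptions produce a nonzero trajectory generated by a periodic switching law, contradicting periodic asymptotic stability. Lyapunov stability bounds all trajectories, while the failure of global uniform exponential stability forces the joint exponential growth rate of the system to equal $0$. Under these circumstances an extremal-norm construction --- carried out for linear switched systems in~\cite{Ba88b} and extended to Lipschitz homogeneous switched systems in~\cite{ChMaSi25} --- yields a norm $\threebar{\cdot}$ on $\R^d$ with the property that every nonzero $v \in \R^d$ is the initial value of some trajectory $x$ of the switched system along which $\threebar{x(t)}$ is constant. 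Let $Z$ denote the unit sphere of $\threebar{\cdot}$; then $Z$ is compact, homeomorphic to $S^{d-1}$, and since $d \le 3$ it embeds homeomorphically in $S^2$.

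By Proposition~\ref{pr:aff} applied to the compact set $Z$ and the uniformly Lipschitz homogeneous vector fields $\Psi_1,\ldots,\Psi_N$, the collection $\mathfrak{X}_Z$ of all trajectories of the switched system whose image lies in $Z$ is a $Z$-valued switched Bebutov shift; by the extremality property of $\threebar{\cdot}$ it contains an element starting from every point of $Z$ and is in particular nonempty. The Birkhoff recurrence theorem, applied to the shift semiflow on $\mathfrak{X}_Z$, yields a recurrent element $\phi \in \mathfrak{X}_Z$, and Theorem~\ref{th:stronger} then produces a preperiodic $\chi \in \mathfrak{X}_Z$ with $\chi(0) = \phi(0)$. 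Choosing $s \ge 0$ and $p > 0$ such that $\chi(t+p) = \chi(t)$ for all $t \ge s$, the function $y \coloneq \sigma^s \chi$ is a $p$-periodic trajectory in $\mathfrak{X}_Z$. Choose a switching law which generates $y|_{[0,p]}$ and extend it $p$-periodically to $[0,\infty)$; Lipschitz uniqueness for the resulting Cauchy problem shows that $y$ itself is the unique trajectory generated by this $p$-periodic switching law. Since $y$ takes values in $Z$ we have $\threebar{y(t)} \equiv 1$, so $y(t) \not\to 0$, contradicting periodic asymptotic stability.

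The crucial nonroutine ingredient is the construction of a Barabanov-type extremal norm $\threebar{\cdot}$ in the Lipschitz homogeneous setting rather than the purely linear setting, and I expect this to be the main obstacle of the proof. The remaining pieces of the argument are a direct application of the switched-Bebutov Poincar\'e-Bendixson machinery of \S\ref{se:core}, together with Proposition~\ref{pr:aff} to place the dynamics into that framework and standard ODE uniqueness to upgrade a $p$-periodic trajectory in $Z$ into one generated by a $p$-periodic switching law.
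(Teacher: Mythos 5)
Your proposal is not the route the paper takes, and it has a genuine gap at the very step you flag as ``the main obstacle.'' You construct a Barabanov-type extremal norm for the Lipschitz homogeneous system from Lyapunov stability and $\Lambda(\A)=0$ alone, citing \cite{Ba88b,ChMaSi25}. Those constructions are for \emph{linear} switched systems and, crucially, assume \emph{irreducibility} (no proper nonzero common invariant subspace), a hypothesis absent from Theorem~\ref{th:homog}. Without it the norm need not exist even in the linear case: take $d=2$ and $A_1=A_2=\begin{pmatrix}0&0\\0&-1\end{pmatrix}$, which is Lyapunov stable with $\Lambda=0$, yet the vector $e_2$ has no trajectory confined to the unit sphere of any norm. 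So the implication ``Lyapunov stable and not GUES $\Rightarrow$ Barabanov norm exists'' on which your argument rests is false, and neither cited reference supplies a Barabanov norm for general homogeneous Lipschitz systems without an irreducibility-type assumption. The later steps you outline (Proposition~\ref{pr:aff}, Birkhoff recurrence, Theorem~\ref{th:stronger}, upgrading a periodic trajectory to one with a periodic switching law via Lipschitz uniqueness) are all fine.

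The paper sidesteps the Barabanov norm entirely. It first invokes a Fenichel-type result (Lemma~\ref{le:fenichel}) to produce a bounded trajectory $x$ bounded away from the origin, sets $Y=\overline{\{x(t):t\ge 0\}}\subset\R^d\setminus\{0\}$, and then uses Zorn's lemma to extract a \emph{minimal} nonempty compact $Z\subseteq Y$ with $\mathfrak{X}_Z\neq\emptyset$. Proposition~\ref{pr:minimal} --- proved by a Gr\"onwall-type estimate using only Lyapunov stability --- shows such a minimal $Z$ meets each ray through $0$ at most once, hence embeds in $S^{d-1}\subset S^2$. From there the Bebutov--Poincar\'e--Bendixson machinery gives a periodic trajectory in $Z$, which is nonzero and destroys periodic asymptotic stability. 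This minimal-set route is essential precisely because it applies to reducible and non-linear homogeneous systems where no Barabanov norm need exist. To repair your proof you would either need to prove an existence theorem for extremal norms of homogeneous Lipschitz systems without irreducibility (which does not hold), or replace that step with something like the Zorn/minimal-set argument and Proposition~\ref{pr:minimal}.
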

In dimension four and higher the corresponding implication is false: for an example see \cite{Mo25}. 

The proof of Theorem~\ref{th:homog} results from the combination of Theorem~\ref{th:stronger} and Proposition~\ref{pr:aff} with two additional results which follow. Both of these results apply in arbitrary dimensions, and indeed the  only point in the proof of Theorem~\ref{th:homog} at which the hypothesis $d \leq 3$ is applied is when appealing to Theorem~\ref{th:stronger}. In this subsection, when $\Psi_1,\ldots,\Psi_N$ are understood we use the notation $\mathfrak{X}_Z$ to denote the set of all trajectories of the switched system defined by $(\Psi_1,\ldots,\Psi_N)$ which take values only in the set $Z\subseteq \R^d$.

The following Fenichel-type lemma is a special case of \cite[Theorem 2]{AnDeSo09}, where it is deduced from results presented in \cite{SoWa96}. In practice we will require this result only in the case where the switched system defined by $\Psi_1,\ldots,\Psi_N$ is Lyapunov stable, and in this case a trajectory which accumulates at the origin necessarily converges to the origin. In that form the lemma also occurs in the unpublished manuscript~\cite{An99}. 

\begin{lemma}\label{le:fenichel}
Let $\Psi_1,\ldots,\Psi_N \colon \R^d \to \R^d$ be homogeneous and uniformly Lipschitz continuous. If every trajectory of the switched system on $\R^d$ defined by $\Psi_1,\ldots,\Psi_N$ accumulates at the origin, then the system is globally uniformly exponentially stable.
\end{lemma}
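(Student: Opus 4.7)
The plan is to show that every unit-norm trajectory has its norm halved within some uniform time $T>0$, and then iterate using homogeneity to obtain exponential decay. First I would note that homogeneity of degree $1$ (forced by the combination of homogeneity and global Lipschitz continuity) together with the uniform Lipschitz hypothesis yields a constant $L>0$ with $\|\Psi_i(v)\|\leq L\|v\|$ for all $v \in \R^d$ and all $i$, whence Gr\"onwall gives $\|x(t)\| \leq e^{Lt}\|x(0)\|$ for every trajectory. Homogeneity also guarantees that rescaling a trajectory by any positive constant (using the same switching law) again produces a trajectory.

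The main substantive step would be to prove the following uniform halving property: there exists $T>0$ such that every trajectory $x$ with $\|x(0)\|=1$ satisfies $\|x(s)\| \leq 1/2$ for some $s \in [0,T]$. Suppose this fails; then for each $n \in \N$ one can find a trajectory $x_n$ with $\|x_n(0)\|=1$ and $\|x_n(t)\|>1/2$ throughout $[0,n]$. The Gr\"onwall bound confines each $x_n$ on $[0,m]$ to the compact ball $\overline{B(0,e^{Lm})}$ and provides a uniform velocity bound, so by the compactness argument used in the proof of Proposition~\ref{pr:aff} (applied on each $[0,m]$ with state space $\overline{B(0,e^{Lm})}$), combined with a diagonal extraction, some subsequence of $(x_n)$ converges uniformly on compacta to a trajectory $x \colon [0,\infty) \to \R^d$. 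This limit satisfies $\|x(0)\|=1$ and $\|x(t)\| \geq 1/2$ for all $t \geq 0$, contradicting the hypothesis that $0$ belongs to the $\omega$-limit set of every trajectory.

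With the halving property in hand, iteration gives the result. Given any trajectory $x$ with $\|x(0)\|=1$, one selects $t_1 \in [0,T]$ with $\|x(t_1)\| \leq 1/2$; applying the halving property to the renormalised trajectory $x(t_1+\cdot)/\|x(t_1)\|$ and scaling back, one produces $t_2 \in (t_1, t_1+T]$ with $\|x(t_2)\| \leq 1/4$, and inductively a sequence $0=t_0<t_1<\cdots$ with $t_n-t_{n-1}\leq T$ and $\|x(t_n)\| \leq 2^{-n}$. Combined with the Gr\"onwall bound on each subinterval $[t_{n-1},t_n]$, this yields $\|x(t)\| \leq 2e^{LT}\cdot 2^{-t/T}$ for every $t\geq 0$, which by homogeneity extends to $\|x(t)\| \leq Ce^{-\kappa t}\|x(0)\|$ for every trajectory, where $C=2e^{LT}$ and $\kappa=(\log 2)/T$.

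The principal obstacle is the compactness argument in the middle step: one must confirm that a uniform-on-compacta limit of trajectories is again a trajectory. This rests on the weak-$*$ compactness of the switching laws, handled exactly as in the proof of Proposition~\ref{pr:aff}, with the Gr\"onwall bound permitting reduction to a compact state space on each finite time interval. The remaining steps are routine manipulations of homogeneity and Gr\"onwall.
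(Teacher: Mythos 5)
The paper does not actually prove this lemma: it cites it as a special case of \cite[Theorem~2]{AnDeSo09}, derived there from results of \cite{SoWa96}, and notes that the form needed (Lyapunov-stable case) also appears in the unpublished manuscript \cite{An99}. Your argument is therefore a self-contained alternative rather than a reproduction of the paper's proof, and it is essentially correct. The structure --- uniform halving time obtained by contradiction via compactness of the trajectory set, followed by iteration using degree-one homogeneity --- is a standard Fenichel-type argument, and the key technical input (that a locally uniform limit of trajectories is a trajectory, via the weak-$*$ compactness argument of Proposition~\ref{pr:aff} restricted to balls $\overline{B(0,e^{Lm})}$ with a diagonal extraction) is correctly identified and handled. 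Two small points are worth spelling out. First, the assertion that homogeneity plus global Lipschitz continuity forces degree one deserves a sentence: if the degree $r$ were $>1$ the Lipschitz bound would fail for large vectors on a fixed ray, and if $0<r<1$ it would fail near the origin, so indeed $r=1$ and $\|\Psi_i(v)\|\leq L\|v\|$. Second, the iteration silently assumes $\|x(t_n)\|\neq 0$ when renormalising; if $x$ hits $0$ at some finite time $t_n$ (or in the limit $t_n\to\hat t<\infty$), Gr\"onwall and $\Psi_i(0)=0$ force $x(t)=0$ for all $t\geq t_n$ (or $t\geq \hat t$), so the exponential bound holds trivially thereafter. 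With these observations inserted, the estimate $\|x(t)\|\leq 2e^{LT}2^{-t/T}$ for unit initial data follows as you describe, since $t<t_{n+1}\leq (n+1)T$ gives $n>t/T-1$, and homogeneity rescales to arbitrary initial data.
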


\begin{proposition}\label{pr:minimal}
Let $\Psi_1,\ldots,\Psi_N \colon \R^d \to \R^d$ be homogeneous and uniformly Lipschitz continuous and suppose that the switched system on $\R^d$ defined by $\Psi_1,\ldots,\Psi_N$ is Lyapunov stable. Suppose $Z \subset \R^d\setminus\{0\}$ is a compact set such that $\mathfrak{X}_Z \neq \emptyset$ and which has no proper subset with the same properties. Then $Z$ is homeomorphic to a subset of $S^{d-1}$ via the map $v \mapsto v/\|v\|$.
\end{proposition}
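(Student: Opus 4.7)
The plan is to show that the radial projection $\pi \colon Z \to S^{d-1}$, $\pi(v) = v/\|v\|$, is injective on $Z$; since $\pi|_Z$ is then a continuous bijection from a compact space onto a Hausdorff space, the claimed homeomorphism follows immediately. Two consequences of minimality will be used repeatedly. By Proposition~\ref{pr:aff} the space $\mathfrak{X}_Z$ is a switched Bebutov shift; for every $\phi \in \mathfrak{X}_Z$ the closure $\overline{\phi([0,\infty))} \subseteq Z$ is a compact set admitting $\phi$ as a trajectory, so by minimality it equals $Z$, meaning every trajectory has dense image in $Z$. The analogous argument applied to the compact set $\{\phi(0) : \phi \in \mathfrak{X}_Z\}$ shows that every point of $Z$ is an initial value of some trajectory in $\mathfrak{X}_Z$.

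Suppose for contradiction that $\pi|_Z$ is not injective, so that $v$ and $\mu v$ both lie in $Z$ for some $v$ and some $\mu > 1$. Set $Y \coloneq Z \cap (Z/\mu) = \{w \in Z : \mu w \in Z\}$. This is a compact subset of $Z$ containing $v$, and it is \emph{proper}: if $Y = Z$ then $\mu Z \subseteq Z$, and iterating gives $\mu^n Z \subseteq Z$ for every $n \geq 1$, contradicting boundedness of $Z$ since $\mu > 1$. Minimality of $Z$ therefore forces $\mathfrak{X}_Y = \emptyset$. Observing that $\chi \in \mathfrak{X}_Y$ if and only if both $\chi$ and $\mu \chi$ lie in $\mathfrak{X}_Z$, the contradiction will be reached by producing a single $\chi \in \mathfrak{X}_Z$ whose $\mu$-scaling also belongs to $\mathfrak{X}_Z$.

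To construct $\chi$ I adopt a maximal-duration argument. For $\phi \in \mathfrak{X}_Z$ with $\phi(0) \in Y$, set
\[T(\phi) \coloneq \sup\{t \geq 0 : \mu \phi(s) \in Z \text{ for all } s \in [0, t]\},\]
and let $T^* \coloneq \sup T(\phi)$ taken over such $\phi$. A routine upper-semicontinuity and compactness argument (using closedness of $Z$) shows that $T^*$ is attained by some $\phi^*$; if $T^* = \infty$ then $\phi^* \in \mathfrak{X}_Y$, giving the contradiction. Otherwise, at the endpoint $w \coloneq \phi^*(T^*)$, which lies in $Y$ because $\mu\phi^*(T^*) = \lim_{s \uparrow T^*} \mu\phi^*(s) \in Z$, I plan to locate a trajectory $\psi \in \mathfrak{X}_Z$ with $\psi(0) = w$ and $T(\psi) > 0$, then apply Lemma~\ref{le:inverse_branches} inside a minimal subset $\mathfrak{M} \subseteq \mathfrak{X}_Z$ (whose trace in $Z$ is dense, by the argument above) to obtain $\tilde\psi \in \mathfrak{X}_Z$ satisfying $\tilde\psi(T^*) = w$ and $\tilde\psi(T^* + s) = \psi(s)$, and invoke the switching axiom of Definition~\ref{de:switched} to splice $\phi^*|_{[0, T^*]}$ with $\tilde\psi|_{[T^*, \infty)}$ into a new element of $\mathfrak{X}_Z$ with joint-containment time at least $T^* + T(\psi) > T^*$, contradicting the definition of $T^*$.

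The main obstacle I anticipate is producing, at each $w \in Y$, a trajectory $\psi \in \mathfrak{X}_Z$ with $\psi(0) = w$ and $T(\psi) > 0$. A natural candidate is $\psi \coloneq \eta/\mu$ where $\eta \in \mathfrak{X}_Z$ starts at $\mu w \in Z$, which automatically gives a trajectory from $w$ lying in $Z/\mu$; what remains is to ensure that $\eta$ can be chosen so that $\eta(s) \in \mu Z$ on a positive initial interval, equivalently $\psi(s) \in Z$ initially. I expect this to follow from combining the density of orbits (minimality), the homogeneity-based scaling $\eta \mapsto \eta/\mu$, and the multiple-switching Lemma~\ref{le:multiple_switching} with a compactness/limit argument that extracts such an $\eta$ from approximately compatible trajectories emanating from $\mu w$ and $w$, but making this step rigorous while simultaneously controlling recurrence within $\mathfrak{M}$ is the most delicate part of the argument.
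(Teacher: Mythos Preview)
Your maximal-duration framework is natural, but the gap you flag is fatal rather than merely ``delicate'': you never invoke Lyapunov stability, which is precisely the hypothesis that makes the proposition work. At the point $w = \phi^*(T^*) \in Y$ there is no mechanism in your argument to produce $\psi \in \mathfrak{X}_Z$ with $\psi(0) = w$ and $T(\psi) > 0$. Your candidate $\psi = \eta/\mu$, with $\eta \in \mathfrak{X}_Z$ and $\eta(0) = \mu w$, takes values in $Z/\mu$, not in $Z$; there is no reason for $Z/\mu$ and $Z$ to overlap along this trajectory beyond the single point $w$, so $\psi$ need not lie in $\mathfrak{X}_Z$ even for an instant. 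Conversely, trajectories in $\mathfrak{X}_Z$ issuing from $w$ need not have $\mu$-scalings that remain in $Z$ for any positive time: the switching law that confines the trajectory from $w$ to $Z$ may immediately expel the trajectory from $\mu w$. Your splicing step compounds the problem: Lemma~\ref{le:inverse_branches} requires recurrence of $\psi$, so you would need $\psi$ in a minimal subset $\mathfrak{M}$, further constraining an object you already cannot construct.

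The paper's argument is structurally different and contradicts Lyapunov stability directly rather than the minimality of $Z$. Given $v, \lambda v \in Z$ with $\lambda > 1$, let $C$ be the Lyapunov constant and choose $n$ with $\lambda^n > 2C$. Minimality provides $x \in \mathfrak{X}_Z$ with $x(0) = v$ whose range is dense in $Z$, so $x$ returns arbitrarily close to $\lambda v$. One then concatenates $n$ suitably chosen segments of the switching law of $x$ to obtain a new switching law; by homogeneity, each segment approximately multiplies the state by $\lambda$, and Gr\"onwall's inequality controls the accumulated error. The resulting trajectory $z$ (of the switched system on $\R^d$, \emph{not} an element of $\mathfrak{X}_Z$) starts at $v$ and satisfies $\|z(T_n)\| \geq \tfrac{1}{2}\lambda^n \|v\| > C\|z(0)\|$, violating the Lyapunov bound. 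The key difference is that the paper's trajectory is permitted to leave $Z$; your argument is trapped by insisting on remaining inside $Z$ throughout.
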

\begin{proof}
We will prove that $Z$ intersects each positive ray $\{\lambda v \colon \lambda \in [0,\infty)\}$ in at most one point, which implies that the continuous function $\pi \colon Z \to S^{d-1}$ defined by $\pi(v)\coloneq \|v\|^{-1}v$ must be injective and hence (by the compactness of $Z$) that $\pi$ is a homeomorphism onto its image. Using Lyapunov stability, let $C>0$ be a constant such that $\sup_{t \geq 0}\|x(t)\|\leq C\|x(0)\|$ for every trajectory $x$. We suppose for a contradiction that $Z$ intersects a positive ray in two points, in which case there exist $v \in Z$ and $\lambda>1$ such that $\lambda v \in Z$. Throughout the proof we fix such a $v$ and $\lambda$, and also fix $n\geq 1$ large enough that $\lambda^n>2C$. To obtain a contradiction we will construct a trajectory $z$ and time $T>0$ such that $\|z(T)\|\geq(\lambda^n/2)\|z(0)\|> C\|z(0)\|$. To construct this trajectory we will begin by identifying segments of trajectories which originate at $v$ and then closely approach $\lambda v$, and by concatenating segments of these trajectories' switching laws derive the switching law which will generate $z$. The second part of the argument is complicated by a need to work backwards in time, beginning with the final segment of the switching law for $z$, constructing the earlier segments inductively in reverse order, and verifying the desired property of $z$ via a reverse induction.

To begin the construction we  claim that there exists a trajectory $x \in \mathfrak{X}_Z$ such that $x(0)=v$ and $\liminf_{t \to \infty} \|x(t)-\lambda v\|=0$. Indeed, let $y \in \mathfrak{X}_Z$ be arbitrary. If $Z'\coloneq \overline{\{y(t) \colon t \in [0,\infty)\}}\subseteq Z$ is not equal to $Z$ then it is a nonempty compact proper subset of $Z$ with the property that $\mathfrak{X}_{Z'}$ is nonempty, which by definition is impossible. It follows that $Z'=Z$ and hence necessarily $\lim_{k \to \infty} y(t_k)=v$ for some sequence of times $(t_k)_{k=1}^\infty$. Choose $x \in \mathfrak{X}_Z$ to be an accumulation point of $(\sigma^{t_k}y)_{k=1}^\infty$ as $k \to \infty$ and observe that $x(0)=v$. By similar reasoning to before the set $\overline{\{x(t) \colon t \in [T,\infty)\}}$ must equal $Z$ for every $T \geq 0$ or else the definition of $Z$ is contradicted, and it follows that $\lambda v \in \overline{\{x(t) \colon t \in [T,\infty)\}}$ for every $T \geq 0$ as required to prove the claim.

Define $\delta\coloneq \lambda^n\|v\|/2n>0$ and choose $K>0$ such that $\max_{1 \leq i \leq N}\|\Psi_i(w_1)-\Psi_i(w_2)\| \leq K\|w_1-w_2\|$ for all $w_1,w_2 \in \R^d$. 
Let $x \in \mathfrak{X}_Z$ be the trajectory given by the previous claim and let $u_1,\ldots,u_N \colon [0,\infty) \to [0,1]$ be measurable switching laws generating the trajectory $x$. Choose a time $t_n>0$ such that $\|x(t_n)-\lambda v\|<\lambda^{1-n}\delta$, and then inductively choose times $t_{n-1}, t_{n-2},\ldots \ldots,t_2, t_1>0$ such that 
\begin{equation}\label{eq:tea-choice}\left\|x(t_k)-\lambda v\right\|<\lambda^{1-k}e^{-K\sum_{j=k+1}^n t_j} \, \delta\end{equation}
for every $k=1,\ldots,n$.  
Write $T_k \coloneq \sum_{j=1}^k t_j$. 
Now let $\hat{u}_1,\ldots,\hat{u}_N \colon [0,\infty) \to [0,1]$ be measurable functions such that $\hat{u}_i(T_{k-1} +s)=u_i(s)$ for a.e.\ $s \in [0,t_k]$  for every $k=1,\ldots,n$, and such that $\sum_{i=1}^N \hat{u}_i = 1$ a.e. Let $z$ be the trajectory corresponding to the switching laws $\hat{u}_1,\ldots,\hat{u}_N$ with initial condition $z(0)=v$. We claim that for every $k =1,\ldots,n$,
\begin{equation}\label{eq:clammy}\left\|z\left(T_k\right) - \lambda^k v\right\| \leq e^{Kt_k} \left\|z\left(T_{k-1}\right) - \lambda^{k-1} v\right\| + e^{-K\sum_{j=k+1}^n t_j} \, \delta.\end{equation}
The case $k=1$ follows trivially from \eqref{eq:tea-choice} together with the fact that $z(t_1)=x(t_1)$, so suppose instead that $k$ lies in the range $2,\ldots,n$. For every $\tau \in [0, t_k]$,  
\begin{align}\MoveEqLeft[5]{\left\|\int_0^\tau \left( \dot{z}\left(T_{k-1} + s\right) - \lambda^{k-1}\dot{x}(s)\right) ds\right\|}&\\\nonumber
 &= \left\|\int_0^\tau\sum_{i=1}^N u_i(s) \left( \Psi_i\left(z\left(T_{k-1} + s\right)\right) - \Psi_i\left( \lambda^{k-1}x(s)\right)\right) ds\right\|\\\nonumber
&\leq K\int_0^\tau \left\|z\left(T_{k-1}+s\right)  - \lambda^{k-1}x(s)\right\|ds\end{align}
where we have used the homogeneity of $\Psi_1,\ldots,\Psi_N$. We in particular have
\begin{multline}\left\|z\left(T_{k-1}+\tau\right) - \lambda^{k-1}x(\tau)\right\| \leq \left\|z\left(T_{k-1}\right) - \lambda^{k-1}x(0)\right\| \\ 
 +K\int_0^\tau \left\|z\left(T_{k-1}+s\right)  - \lambda^{k-1}x(s)\right\|ds \end{multline}
for all $\tau \in [0,t_k]$. It follows by Gr\"onwall's inequality that 
\begin{equation}\left\|z\left(T_k\right) - \lambda^{k-1}x(t_k) \right\| \leq e^{Kt_k} \left\|z\left(T_{k-1}\right) - \lambda^{k-1}x(0)\right\|\end{equation}
and the claimed inequality \eqref{eq:clammy} follows using $x(0)=v$ together with the inequality~\eqref{eq:tea-choice}. The inequality
\begin{equation}\left\|z\left(T_n\right) - \lambda^n v\right\| \leq e^{K\sum_{j=k}^n t_j} \left\|z\left(T_{k-1}\right) - \lambda^{k-1} v\right\| + (n+1-k)\delta\end{equation}
for all $k=1,\ldots,n$ follows from \eqref{eq:clammy} by a downward induction starting with $k=n$. In the  case $k=1$ the first term on the right-hand side evaluates to zero, yielding
\begin{equation}\left\|z\left(T_n\right) - \lambda^n v\right\| \leq n\delta = \frac{1}{2} \left\|\lambda^n v\right\|\end{equation}
and therefore
\begin{equation}\left\|z\left(T_n\right)\right\|\geq\frac{\lambda^n\|v\|}{2} >C\|v\|=C\left\|z(0)\right\|\end{equation}
which contradicts the definition of $C$. We conclude that $Z$ cannot intersect a positive ray in two distinct points and the result follows.
\end{proof}

\begin{proof}[Proof of Theorem~\ref{th:homog}]
Let $d \leq 3$ and suppose that the switched system on $\R^d$ defined by $\Psi_1,\ldots,\Psi_N$ is Lyapunov stable but is \emph{not} globally uniformly exponentially stable. To prove the theorem we will show that this system is not periodically asymptotically stable. 

It follows from Lemma~\ref{le:fenichel} that there exists a trajectory $x$ which does not accumulate at the origin, and by Lyapunov stability this trajectory is also bounded. Define $Y\coloneq \overline{\{x(t) \colon t \geq 0\}}$ and observe that $Y$ is a compact subset of $\R^d\setminus \{0\}$ with the property that $\mathfrak{X}_Y$ is nonempty. Let $\mathscr{Z}$ denote the set of all compact, nonempty sets $Z\subseteq Y$ with the property that $\mathfrak{X}_Z$ is nonempty. Clearly $\mathscr{Z}$ is nonempty since it contains the element $Y$. 

We claim that $ \mathscr{Z}$ has a minimal element with respect to inclusion. By Zorn's lemma it is sufficient to show that if $(Z_\alpha)_{\alpha \in A}$ is a family of elements of $\mathscr{Z}$ which is totally ordered with respect to inclusion, then there exists $Z_0 \in \mathscr{Z}$ which is a subset of every $Z_\alpha$. Given such a family $(Z_\alpha)_{\alpha \in A}$ define $Z_0\coloneq \bigcap_{\alpha \in A}Z_\alpha\subseteq Y$ which is compact and nonempty by standard results of general topology. We then have $\mathfrak{X}_{Z_0}=\mathfrak{X}_{\bigcap_{\alpha \in A} Z_\alpha} = \bigcap_{\alpha \in A} \mathfrak{X}_{Z_\alpha}  \neq \emptyset$ by applying the same principle to the totally ordered family of nonempty compact sets $(\mathfrak{X}_{Z_\alpha})_{\alpha \in A}$, each of which is a closed subset of the compact set $\mathfrak{X}_Y$. In particular $Z_0$ is the required element of $\mathscr{Z}$ and Zorn's lemma applies, proving the claim.

Now let $Z\subseteq Y \subset \mathbb{R}^d\setminus \{0\}$ be as given by the claim, and observe that by Proposition~\ref{pr:minimal} the set $Z$ is homeomorphic to a subset of $S^{d-1}\subseteq S^2$. By Proposition~\ref{pr:aff} the set $\mathfrak{X}_Z$ is a switched Bebutov shift, and by the Birkhoff recurrence theorem there exists an element of $\mathfrak{X}_Z$ which is recurrent with respect to the shift semiflow $(\sigma^\tau)_{\tau \geq 0}$ on $\mathfrak{X}_Z$, so by Theorem~\ref{th:stronger} there exists a periodic trajectory $x \in \mathfrak{X}_Z$. Clearly the existence of a switching law which generates the nonzero periodic trajectory $x$ implies the existence of a \emph{periodic} switching law generating the same trajectory.  It follows that the switched system defined by $\Psi_1,\ldots,\Psi_N$ is not periodically asymptotically stable, and the proof is complete. \end{proof}

\subsection{Stability of switched linear systems in up to three real dimensions}
We may now prove unconditionally that periodic asymptotic stability implies global uniform exponential stability for real linear switched systems in up to three dimensions, removing a technical hypothesis from a theorem of Chitour, Gaye and Mason (see \cite{ChGaMa15}) and answering Question~\ref{qu:siamreview} positively in that dimension range. In the case where the system is Lyapunov stable this implication follows directly from Theorem~\ref{th:homog}, so the proof consists principally of an analysis of those cases in which Lyapunov stability does not hold. In this we will be aided by an earlier work of Chitour, Mason and Sigalotti on marginally unstable switched systems \cite{ChMaSi12}.  

If $\mathsf{A}=(A_1,\ldots,A_N)$ is a tuple of $d \times d$ real matrices, we define the corresponding \emph{uniform exponential rate} is defined as 
\begin{equation} \label{eq:UER}
\Lambda(\mathsf{A})\coloneq \lim_{t \to \infty}  \frac{1}{t} \sup_{x(0)\neq 0}\log \left(\frac{\|x(t)\|}{\|x(0)\|}\right)
\end{equation}
where the supremum is over all nonzero trajectories $x$ of the linear switched system defined by $\A$. The existence of the limit is guaranteed by subadditivity: see for example \cite[\S1.3.2]{ChMaSi25} for a detailed analysis.
The quantity $e^{\Lambda(A)}$ is a continuous-time analog of the joint spectral radius \eqref{eq:JSR}.

\begin{theorem}\label{th:linear}
Let $\mathsf{A}=(A_1,\ldots,A_N)$ be a tuple of $d \times d$ real matrices, where $1 \leq d\leq 3$. If the linear switched system defined by $\mathsf{A}$ is periodically asymptotically stable, then it is globally uniformly exponentially stable.
\end{theorem}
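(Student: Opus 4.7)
The plan is to deduce Theorem~\ref{th:linear} from Theorem~\ref{th:homog} together with results on marginally unstable linear switched systems. Since each $A_i$ is homogeneous and globally Lipschitz, Theorem~\ref{th:homog} already delivers the desired implication whenever the switched system defined by $\mathsf{A}$ is Lyapunov stable. Accordingly, the task reduces to excluding the existence of a periodically asymptotically stable linear switched system in dimension $d\leq 3$ which fails to be globally uniformly exponentially stable.

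My approach is to argue by contradiction and then normalise at the boundary of stability. Assume that $\mathsf{A}$ is PAS but not GUES; from the definition \eqref{eq:UER} one sees that GUES is equivalent to $\Lambda(\mathsf{A})<0$, so our hypothesis forces $\Lambda(\mathsf{A})\geq 0$. I would then introduce the shifted tuple $\hat{\mathsf{A}}\coloneq (A_1-\Lambda(\mathsf{A})I,\ldots,A_N-\Lambda(\mathsf{A})I)$. A direct computation shows that the trajectories of $\hat{\mathsf{A}}$ and $\mathsf{A}$ under the same switching law are related by $\hat{x}(t)=e^{-\Lambda(\mathsf{A})t}x(t)$, whence $\Lambda(\hat{\mathsf{A}})=0$. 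Moreover, since $\Lambda(\mathsf{A})\geq 0$ the factor $e^{-\Lambda(\mathsf{A})t}$ is bounded by $1$, so PAS of $\mathsf{A}$ transfers immediately to PAS of $\hat{\mathsf{A}}$.

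At this point I would split according to whether the boundary system $\hat{\mathsf{A}}$ is Lyapunov stable. If it is, Theorem~\ref{th:homog} applied to the linear (hence homogeneous and uniformly Lipschitz) maps $\hat{A}_i$ yields GUES for $\hat{\mathsf{A}}$, which forces $\Lambda(\hat{\mathsf{A}})<0$, contradicting $\Lambda(\hat{\mathsf{A}})=0$. If not, then $\hat{\mathsf{A}}$ is marginally unstable, and I would invoke the structural analysis of Chitour, Mason and Sigalotti in \cite{ChMaSi12} to produce a periodic switching law along which the corresponding trajectory of $\hat{\mathsf{A}}$ fails to decay to zero; this directly contradicts PAS of $\hat{\mathsf{A}}$ and completes the argument.

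The main obstacle is the marginally unstable sub-case: from the bare failure of Lyapunov stability at $\Lambda(\hat{\mathsf{A}})=0$ there is no transparent way to extract a \emph{periodic} switching law whose trajectory fails to decay, and the deep structural input from \cite{ChMaSi12} is precisely what supplies such a witness. The dimension restriction $d\leq 3$ enters only through Theorem~\ref{th:homog}, and it is sharp since the conclusion of Theorem~\ref{th:linear} is known to fail in dimensions $d\geq 4$.
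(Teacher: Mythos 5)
Your overall architecture matches the paper's proof: apply Theorem~\ref{th:homog} when Lyapunov stability holds, normalise by shifting to $\hat{\mathsf{A}}$ so that $\Lambda(\hat{\mathsf{A}})=0$, and fall back on the Chitour--Mason--Sigalotti structure theorem for marginally unstable systems when Lyapunov stability fails. (A minor organisational difference: the paper does the $\Lambda(\mathsf{A})=0$ case first and performs the shift only afterwards, whereas you shift at the outset; this has no mathematical content.)

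There is, however, a genuine gap at the crux of the marginally unstable branch. You assert that \cite{ChMaSi12} ``supplies'' a periodic switching law along which the trajectory of $\hat{\mathsf{A}}$ fails to decay. That is not what \cite[Theorem~2]{ChMaSi12} does. It provides only a structural dichotomy: after a change of basis, the matrices $\hat{A}_i$ are simultaneously block upper-triangular, in one of three ways in dimension three, and in every case at least one diagonal block is $1\times 1$ with the uniform exponential rate of that one-dimensional block equal to zero. From there an additional argument is required: one must observe that for a one-dimensional switched system, $\Lambda$ is simply the maximum of the scalars, so one of the $\hat{A}_i$ has a zero diagonal entry in its triangular form, hence a zero eigenvalue, hence a nonzero fixed vector, hence a \emph{constant} switching law with a stationary trajectory. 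It is this chain, not the structure theorem itself, that produces the failure of periodic asymptotic stability. As written, your proposal leaves this step as a black box, and your own acknowledgment that ``there is no transparent way to extract a periodic switching law'' signals that the extraction still needs to be carried out; working through the three block forms of \cite[Theorem~2]{ChMaSi12} as above closes the gap.
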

\begin{proof} Without loss of generality we assume $d=3$ since the lower-dimensional cases can easily be embedded in the three-dimensional case. 
To prove the theorem we suppose that the switched system is not globally uniformly exponentially stable, with the aim of deducing that it cannot be periodically asymptotically stable. Clearly if the uniform exponential rate $\Lambda(\A)$ is negative then the switched system defined by $\mathsf{A}$ is globally uniformly exponentially stable, so under our hypotheses $\Lambda(\A)$ is necessarily non-negative. 

Suppose first that $\Lambda(\A)$ is precisely zero. If the switched system defined by $\mathsf{A}$ is Lyapunov stable then the result follows by Theorem~\ref{th:homog}, so we suppose that the system is \emph{not} Lyapunov stable. In this case the switched system defined by $\mathsf{A}$ is \emph{marginally unstable} in the sense of Z.~Sun's article \cite{Su08}. By a general theorem of Chitour, Mason and Sigalotti \cite[Theorem~2]{ChMaSi12} this implies that one of the following three possibilities holds:

\emph{Case~1.} There exists an invertible $3\times 3$ real matrix $X$ such that for every $i=1,\ldots,N$ we may write
\begin{equation}A_i= X \begin{pmatrix} B_i & v_i \\ 0& c_i\end{pmatrix}X^{-1}  \end{equation}
where $B_i$ is a real $2\times 2$ matrix, $v_i$ is a real $2\times 1$ matrix and $c_i$ is a real $1\times 1$ matrix. Moreover, if we define $\mathsf{B}=(B_1,\ldots,B_N)$ and $\mathsf{C}=(c_1,\ldots,c_N)$ then $\Lambda(\mathsf{B})=\Lambda(\mathsf{C})=0$ and the linear switched system defined by $\mathsf{B}$ is Lyapunov stable.
In this case we note that by the one-dimensionality of $c_1,\ldots,c_N$, the limit $\Lambda(\mathsf{C})$ is easily seen to simply equal the maximum of the real numbers $c_1,\ldots,c_N$. Since this maximum is zero it follows that one of the matrices $X^{-1}A_iX$ has only zeros in its bottom row, and consequently the corresponding matrix $A_i$ has a zero eigenvalue. Consequently there exists a constant switching law which generates a nonzero stationary trajectory of the linear switched system defined by $\mathsf{A}$, and the system is not periodically asymptotically stable.

\emph{Case~2.} There exists an invertible $3\times 3$ real matrix $X$ such that for every $i=1,\ldots,N$ we may write
\begin{equation}A_i= X \begin{pmatrix} b_i & v_i^\mathsf{T} \\ 0& C_i\end{pmatrix}X^{-1}  \end{equation}
where $b_i$ is a real $1\times 1$ matrix, $v_i^\mathsf{T}$ is a real $1\times 2$ matrix and $C_i$ is a real $2\times 2$ matrix. Moreover, if we define $\mathsf{B}=(B_1,\ldots,B_N)$ and $\mathsf{C}=(c_1,\ldots,c_N)$ then $\Lambda(\mathsf{B})=\Lambda(\mathsf{C})=0$  and the linear switched system defined by $\mathsf{C}$ is Lyapunov stable. Similarly to the previous case, the one-dimensionality of $b_1,\ldots,b_N$ implies that one of the real numbers $b_i$ must be equal to $\Lambda(\B)$, hence one of the matrices $X^{-1}A_iX$ has a zero column, hence there exists a constant switching law which generates a nonzero stationary trajectory and the system is not periodically asymptotically stable.

\emph{Case~3.} There exists an  invertible $3\times 3$ real matrix $X$ such that for every $i=1,\ldots,N$ we may write
\begin{equation}A_i= X \begin{pmatrix} b_i & u_i & v_i \\ 0&c_i & w_i\\ 0&0&d_i\end{pmatrix}X^{-1}  \end{equation}
where each of $b_i$, $c_i$, $d_i$, $u_i$, $v_i$, $w_i$ is a real number, which we view as a $1\times 1$ real matrix. Moreover, if we define $\mathsf{B}=(b_1,\ldots,b_N)$, $\mathsf{C}=(c_1,\ldots,c_N)$ and $\mathsf{D}=(d_1,\ldots,d_N)$ then at least two of $\Lambda(\mathsf{B})$, $\Lambda(\mathsf{C})$ and $\Lambda(\mathsf{D})$ are zero.  In the same manner as the previous cases, the fact that one of these three limits is equal to zero implies that there exists $i$ such that one of $b_i$, $c_i$ or $d_i$ is zero. The corresponding matrix $X^{-1}A_iX$ is thus upper triangular with a zero diagonal entry, hence there again exists a constant switching law which generates a nonzero stationary trajectory and the system is not periodically asymptotically stable.

Summarising this analysis, we have shown that if $\Lambda(\A)=0$ and the linear switched system generated by $\mathsf{A}$ is Lyapunov stable, then it fails to be periodically asymptotically stable as a consequence of Theorem~\ref{th:homog}; but if $\Lambda(\A)=0$  and the system is \emph{not} Lyapunov stable, then it has a stationary trajectory as a consequence of \cite[Theorem 2]{ChMaSi12} and of the above case-by-case analysis, and is again not periodically asymptotically stable.

It remains to consider the case in which $\Lambda(\A)>0$. In this case, consider the linear switched system $\hat{\mathsf{A}}=(\hat{A}_1,\ldots,\hat{A}_N)$ defined by $\hat{A}_i\coloneq A_i-\Lambda(\A) \cdot I$ for $i=1,\ldots,N$. It is easily checked that $\hat{x}$ is a trajectory of the system defined by $\hat{\mathsf{A}}$ if and only if it has the form $\hat{x}(t)\equiv e^{- t\Lambda(\A)}x(t)$ for some trajectory $x$ of the system defined by $\mathsf{A}$ which is generated by an identical switching law $(u_1,\ldots,u_N)$. It follows directly from this observation that $\Lambda(\hat{\mathsf{A}})=0$. Applying the preceding analysis to $\hat{\mathsf{A}}$ it follows that there exists a periodic or stationary trajectory $\hat{x}(t)$ of the system defined by $\hat{\mathsf{A}}$ which is generated by a periodic or stationary switching law $(u_1,\ldots,u_N)$. The same switching law $(u_1,\ldots,u_N)$ thus generates a trajectory $x(t)=e^{t\Lambda(\A)} \hat{x}(t)$ of the switched system defined by $\mathsf{A}$ which escapes exponentially to infinity. In particular $\mathsf{A}$ fails to be periodically asymptotically stable. We have shown that if $\mathsf{A}$ is not globally uniformly exponentially stable then it is not periodically asymptotically stable, and the proof of the theorem is complete.
\end{proof}

\subsection{Stability of switched linear systems in two complex dimensions}
The techniques of this article also yield a proof that periodic stability implies global uniform exponential stability for certain \emph{complex} linear switched systems, this time in dimension two:
\begin{theorem}
Let $\mathsf{A}=(A_1,\ldots,A_N)$ be a tuple of $2 \times 2$ complex matrices. If the linear switched system defined by $\mathsf{A}$ is periodically asymptotically stable, then it is globally uniformly exponentially stable.
\end{theorem}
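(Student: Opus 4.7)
The plan is to parallel the proof of Theorem~\ref{th:linear} while exploiting the additional $S^1$-symmetry afforded by complex scalar multiplication: whenever $x(t)$ is a trajectory of $\mathsf{A}$ with switching law $u$, the curve $e^{i\theta}x(t)$ is also a trajectory generated by the same $u$. The case $\Lambda(\mathsf{A}) < 0$ is immediate, so we may assume $\Lambda(\mathsf{A}) \geq 0$. The substitution $A_i \mapsto A_i - \Lambda(\mathsf{A})I$ then reduces the problem to the case $\Lambda(\mathsf{A})=0$, since a non-decaying trajectory of the shifted system generated by a periodic switching law would yield, after multiplication by $e^{t\Lambda(\mathsf{A})}$, a trajectory of $\mathsf{A}$ with the same periodic switching law which fails to decay.

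The heart of the argument is the Lyapunov stable case. Averaging any Barabanov norm for $\mathsf{A}$ against the $S^1$-action produces a Barabanov norm $\threebar{\cdot}$ that is complex-homogeneous, meaning $\threebar{\lambda v} = |\lambda|\threebar{v}$ for all $\lambda \in \mathbb{C}$. Let $S \subset \mathbb{C}^2$ denote its unit sphere (a topological $3$-sphere) and let $\mathfrak{X}$ denote the switched Bebutov shift given by Proposition~\ref{pr:aff} consisting of trajectories of $\mathsf{A}$ which remain on $S$ for all time. The Hopf projection $\pi \colon S \to \CP^1 \cong S^2$ is $S^1$-equivariant and intertwines trajectories, so the image $\bar{\mathfrak{X}} \coloneq \{\pi \circ \phi : \phi \in \mathfrak{X}\}$ is a candidate $S^2$-valued switched Bebutov shift. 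Compactness and shift-invariance are immediate; the switching axiom follows from $S^1$-equivariance: if $\pi \circ \phi_1$ and $\pi \circ \phi_2$ agree at some $\tau > 0$, then $\phi_1(\tau) = e^{i\theta}\phi_2(\tau)$ for some $\theta \in \R$, and since $e^{i\theta}\phi_2 \in \mathfrak{X}$ by $\mathbb{C}$-linearity, the switching axiom applied inside $\mathfrak{X}$ to $\phi_1$ and $e^{i\theta}\phi_2$ projects to the required concatenation in $\bar{\mathfrak{X}}$.

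By the Birkhoff recurrence theorem $\bar{\mathfrak{X}}$ contains a recurrent element; Theorem~\ref{th:stronger} then provides a preperiodic element $\bar\chi \in \bar{\mathfrak{X}}$ of eventual period $p>0$. After translating in time to absorb the preperiod, any lift $\chi \in \mathfrak{X}$ satisfies $\pi(\chi(p)) = \pi(\chi(0))$, so $\chi(p) = e^{i\theta_0}\chi(0)$ for some $\theta_0 \in \R$. Repeating the switching law of $\chi$ on $[0,p]$ with period $p$ then produces, by $\mathbb{C}$-linearity and uniqueness of Carath\'eodory solutions of linear ODEs, a trajectory $y$ of $\mathsf{A}$ satisfying $y(kp) = e^{ik\theta_0}\chi(0)$ for every $k \in \N$. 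The norm $\threebar{y(kp)} = \threebar{\chi(0)}$ is a nonzero constant, so $y$ does not decay, contradicting periodic asymptotic stability.

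It remains to treat $\Lambda(\mathsf{A}) = 0$ when Lyapunov stability fails. Here I would invoke the structural theorem of Chitour, Mason and Sigalotti~\cite{ChMaSi12} applied to the $4\times 4$ real representation of $\mathsf{A}$, and observe that any real common invariant subspace produced by the theorem may be replaced by a $\mathbb{C}$-invariant one by averaging against the $S^1$-action. This yields a common invariant complex line and a change of complex coordinates placing each $A_i$ in complex upper-triangular form with diagonal entries $b_i, c_i \in \mathbb{C}$ satisfying $\max_i \mathrm{Re}\, b_i = \max_i \mathrm{Re}\, c_i = 0$; some such entry then has the form $i\omega$ with $\omega \in \R$, and the associated constant switching law generates the non-decaying trajectory $t \mapsto e^{i\omega t}v$. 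This last step is the principal obstacle: verifying that the real structural decomposition of~\cite{ChMaSi12} can be realised in a way compatible with the complex structure requires some care, but is forced by $S^1$-equivariance. Once this is accomplished, the argument above is complete.
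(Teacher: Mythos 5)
Your core projection argument is essentially the paper's, and you are in fact slightly more careful than the paper in two places: you explicitly verify the switching axiom for the projected $\CP^1$-valued Bebutov shift (using that two lifts agreeing in $\CP^1$ differ by a unimodular scalar, which requires complex-homogeneity of the Barabanov norm), and you handle the preperiod/period distinction and the step from ``$\pi\circ\chi$ periodic'' to ``$y(kp)=e^{ik\theta_0}\chi(0)$'' carefully. These observations are worth recording. The paper secures complex-homogeneity by constructing the Barabanov norm directly on $\C^2$ as a complex vector space rather than by averaging, but the two routes are equivalent here.

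However, the case split you choose --- Lyapunov stable versus not --- is the wrong one and leaves two genuine gaps. In the Lyapunov stable branch you ``average any Barabanov norm for $\mathsf{A}$'', but a Barabanov norm need not exist: with $\Lambda(\A)=0$ and Lyapunov stability you can still have a common invariant $\C$-line along which trajectories decay strictly (e.g. a single diagonal matrix with entries $0$ and $-1$), and then no Barabanov norm exists. This case must be dealt with separately; as it happens it is easy (if there is a common invariant line, upper-triangularise, and $\Lambda(\A)=0$ forces one of the diagonal entries to be purely imaginary, giving a constant non-decaying solution), but your proof as written simply asserts the norm's existence.

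In the non-Lyapunov-stable branch you invoke the structural theorem of Chitour--Mason--Sigalotti on the real $4\times 4$ representation and then claim the resulting real invariant subspace ``may be replaced by a $\mathbb{C}$-invariant one by averaging against the $S^1$-action.'' You correctly identify this as the principal obstacle, and it is indeed a gap: a real invariant subspace $V$ of $\R^4$ preserved by $J$-commuting maps need not yield a nontrivial $J$-invariant subspace at all when $\dim_\R V = 2$ and $V\cap JV=\{0\}$, and even when it does, the ``moreover'' clauses of \cite{ChMaSi12} (equality of the uniform exponential rates of the factors) would need to be transported to the complex subspace. The paper sidesteps this entirely. Its case split is by eigenvalue real parts and by common $\C$-invariant subspaces, not by Lyapunov stability: if some $A_i$ has an eigenvalue with non-negative real part, the constant switching law already defeats periodic asymptotic stability; if all eigenvalues have strictly negative real part and there is a common $\C$-invariant line, a simultaneous upper-triangularisation followed by a diagonal rescaling makes the Euclidean norm a strict Lyapunov function, forcing GUES and a contradiction; otherwise the $A_i$ are irreducible over $\C$, the complex Barabanov norm exists, and the projection argument applies. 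This disposes of all cases, including the marginally unstable ones, by elementary linear algebra, without any appeal to \cite{ChMaSi12} and without needing to promote a real subspace to a complex one.
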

\begin{proof}
Similarly to the proof of Theorem~\ref{th:linear}, we suppose that $\A$ is not globally uniformly exponentially stable with the aim of demonstrating that it is also not periodically asymptotically stable. 
Since $\A$ is assumed not to be globally uniformly exponentially stable we have $\Lambda(\A)\geq 0$. We suppose without loss of generality that the uniform exponential rate $\Lambda(\A)$ is zero, since the case in which it is positive can be derived from the case where it is zero exactly as in the proof of Theorem~\ref{th:linear}. If any $A_i$ has an eigenvalue with non-negative real part then there trivially exists a constant switching law generating a trajectory which does not tend to zero, in which case periodic asymptotic stability is not satisfied. For the remainder of the proof we suppose instead that every eigenvalue of every $A_i$ has negative real part.

If any nonzero proper $\C$-linear subspace of $\C^2$ is simultaneously preserved by every $A_i$, then by a simultaneous change of basis we may suppose that every $A_i$ is upper triangular. By a further, diagonal change of basis we may suppose that the absolute value of every off-diagonal entry of every $A_i$ is arbitrarily small, and when these off-diagonal entries are small enough relative to the real parts of the diagonal entries, the Euclidean norm is a strict Lyapunov function for $\A$. This implies that  $\A$ is globally uniformly exponentially stable, contradicting our earlier assumption. We conclude from this that the matrices $A_i$ cannot have a nontrivial common invariant subspace. This implies that we may construct a Barabanov norm for $\A$ on $\C^2$ by following precisely the arguments of \cite{Ba88b,ChMaSi25} but using complex instead of real vector spaces. Let $Z$ denote the unit sphere of this Barabanov norm. By identifying $\C^2$ with $\R^4$ and $\A$ with an element of $M_4(\R)^N$, Proposition~\ref{pr:aff} implies that the set $\mathfrak{X}_Z$ of all $Z$-valued trajectories of the switched linear system defined by $\A$ is a switched Bebutov shift. 
By the defining property of a Barabanov norm the set $\mathfrak{X}_Z$ is nonempty. Now let $\pi \colon \C^2 \setminus \{0\} \to \CP^1$ be the continuous map which takes each nonzero vector in $\C^2$ to the one-dimensional $\C$-linear subspace which it generates, and consider the $\CP^1$-valued switched Bebutov shift $\{\pi \circ x \colon x \in \mathfrak{X}_Z\}$. Since the complex projective line $\CP^1$ is homeomorphic to $S^2$, by Theorem~\ref{th:stronger} there exists a trajectory $x \colon [0,\infty) \to Z$ of the linear switched system defined by $\A$ with the property that $\pi \circ x$ is periodic, and without loss of generality we may assume that the switching law which generates this trajectory is also periodic. The trajectory $x$ does not accumulate at zero since its range is contained in the unit sphere of the Barabanov norm, but it is generated by a periodic switching law, and thus periodic asymptotic stability is not satisfied.
\end{proof}

\subsection{Non-uniqueness of extremal trajectories for switched linear systems}\label{ss:split}

We close this article by noting a result which strongly suggests that in order to prove Theorem~\ref{th:linear} and answer the three-dimensional case of Question~\ref{qu:siamreview}, there was essentially no option other than to study some kind of multi-valued or branching flow on the sphere or projective plane (as we have undertaken in this article by the use of Bebutov shifts). More specifically, the strategy outlined in the introduction, in which one studies trajectories of a linear switched system which lie on the unit sphere of a Barabanov norm, cannot in general reduce to the analysis of an ordinary semiflow on the unit sphere of that norm.

If the uniform exponential rate $\Lambda(A)$ defined in \eqref{eq:UER} vanishes and $\A$ is Lyapunov stable, then a Barabanov norm for $\A$ is precisely a norm $\threebar{\cdot}$ on $\R^d$ with the property that every unit vector is the origin of at least one trajectory which remains in the unit sphere of $\threebar{\cdot}$ for all time. We call such a trajectory an \emph{extremal trajectory}  with respect to the Barabanov norm $\threebar{\cdot}$. If the only linear subspaces of $\R^d$ which are simultaneously preserved by every $A_1,\ldots,A_N$ are the trivial subspaces $\{0\}$ and $\R^d$, then the existence of a Barabanov norm is guaranteed (see for example \cite{Ba88b,ChMaSi25}); however this is not a necessary condition for the existence of a Barabanov norm (see \cite{ChMaSi22}).  The result of this section is the following:
\begin{theorem}\label{th:bara}
Let $\A=(A_1,\ldots,A_N)$ be a tuple of $d \times d$ real matrices satisfying $\Lambda(\A)=0$ and which admits a Barabanov norm $\threebar{\cdot}$. Suppose also that $d$ is odd and that every convex combination of the matrices $A_1,\ldots,A_N$ is nonsingular. Then there exist two distinct trajectories of the linear switched system defined by $\A$ which are extremal  with respect to $\threebar{\cdot}$ and which share the same initial state.
\end{theorem}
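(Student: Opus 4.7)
The plan is to argue by contradiction: assume that from every point $v$ of the unit sphere $Z \subset \R^d$ of the Barabanov norm $\threebar{\cdot}$ there emanates exactly one extremal trajectory, and derive a singular convex combination of the $A_i$. Let $\mathfrak{X}_Z$ denote the set of $Z$-valued trajectories, which is a compact switched Bebutov shift by Proposition~\ref{pr:aff}. Since $\threebar{\cdot}$ is a Barabanov norm, the evaluation map $\mathrm{ev}_0 \colon \mathfrak{X}_Z \to Z$, $x \mapsto x(0)$, is surjective; the uniqueness assumption makes it bijective, hence a homeomorphism by compactness of $\mathfrak{X}_Z$. A straightforward check using shift-invariance of $\mathfrak{X}_Z$ combined with uniqueness shows that $\Phi^t(v) \coloneq \mathrm{ev}_t(\mathrm{ev}_0^{-1}(v))$ satisfies the semigroup property, and continuity is clear, so $\Phi$ defines a continuous semiflow on $Z$.

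The next step is to produce a point fixed by every $\Phi^t$. Since $Z$ is the unit sphere of a norm on $\R^d$, it is homeomorphic to $S^{d-1}$, which has Euler characteristic $2$ because $d-1$ is even. For each $t > 0$ the map $\Phi^t \colon Z \to Z$ is homotopic to the identity via $(s, v) \mapsto \Phi^{st}(v)$, so its Lefschetz number equals $\chi(Z) = 2$ and by the Lefschetz fixed point theorem $\Phi^t$ has a fixed point. Writing $F_s \coloneq \{v \in Z \colon \Phi^s(v) = v\}$, the inclusion $F_s \subseteq F_{ns}$ for every positive integer $n$ yields in particular $F_{1/(n+1)!} \subseteq F_{1/n!}$, so $(F_{1/n!})_{n \geq 1}$ is a decreasing sequence of nonempty closed subsets of the compact space $Z$, with nonempty intersection. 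Any point $x_\ast$ in this intersection is fixed by every $\Phi^{k/n!}$ with $k, n \in \N$; the density of $\{k/n! : k, n \in \N\}$ in $[0, \infty)$ combined with continuity of $t \mapsto \Phi^t(x_\ast)$ then forces $\Phi^t(x_\ast) = x_\ast$ for all $t \geq 0$.

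The contradiction follows quickly: the constant function $x(t) \equiv x_\ast$ is an extremal trajectory, so it satisfies $\dot{x}(t) = \sum_{i=1}^N u_i(t) A_i x_\ast = 0$ a.e.\ for some measurable switching law with $\sum u_i \equiv 1$, and at almost any such time the convex combination $\sum_i u_i(t) A_i$ admits $x_\ast \neq 0$ in its kernel, contradicting the nonsingularity hypothesis. The hard part is the second paragraph: Lefschetz's theorem gives a fixed point for each individual time $t$, but extracting one fixed simultaneously by every $\Phi^t$ requires exploiting the semigroup structure, and the use of factorial denominators to make the fixed-point sets nest is the key combinatorial input — after which continuity and density close the argument.
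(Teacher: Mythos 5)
Your overall strategy coincides with the paper's: assume uniqueness, observe that $\mathrm{ev}_0 \colon \mathfrak{X}_Z \to Z$ is a homeomorphism (the paper composes with $v \mapsto v/\|v\|$ to land in $S^{d-1}$, but this is cosmetic), transport the shift semiflow to a continuous semiflow on an even-dimensional sphere, produce a point fixed by every $\Phi^t$, and read off a stationary trajectory that kills a convex combination of the $A_i$. The one place where you diverge is the topological input used to produce the fixed point. The paper's Lemma~\ref{le:fixed-sphere} invokes the elementary fact (essentially Borsuk) that a continuous self-map of $S^{2n}$ without a fixed point must send some point to its antipode; since $\Phi^t \to \mathrm{id}$ uniformly as $t \to 0^+$ (by compactness and joint continuity), for small dyadic $t$ no point is sent near its antipode, so $\Phi^{2^{-k}}$ has a fixed point, and the nested sets $X_k = \{x : \Phi^{2^{-k}}x = x\}$ intersect in a common fixed point. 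You instead invoke the Lefschetz fixed point theorem, using the homotopy $(s,v) \mapsto \Phi^{st}(v)$ to identify the Lefschetz number of $\Phi^t$ with $\chi(S^{d-1}) = 2 \neq 0$, and then nest fixed-point sets over the times $1/n!$. Both are correct. The Lefschetz argument is perhaps cleaner conceptually and works for any single time $t$, whereas the paper's argument relies on small-time proximity to the identity and so is slightly more elementary in its topological ingredient. Your factorial denominators are fine but heavier than needed; the paper's dyadic $2^{-k}$ accomplishes the same nesting with $(\Phi^{2^{-(k+1)}})^2 = \Phi^{2^{-k}}$. One point you gloss over that deserves a word: the homotopy $(s,v) \mapsto \Phi^{st}(v)$ requires joint continuity of $(t,v) \mapsto \Phi^t v$, which does hold here (it is inherited from the joint continuity of the shift semiflow on the Bebutov shift), but ``continuity is clear'' is doing some work.
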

In order to prove Theorem~\ref{th:bara} we require the following simple topological result:
\begin{lemma}\label{le:fixed-sphere}
Let $(\Phi^t)_{t \geq 0}$ be a semiflow on a topological space $Z$ which is homeomorphic to an even-dimensional sphere $S^{2n}$. Then $(\Phi^t)$ has a fixed point.
\end{lemma}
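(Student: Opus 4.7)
The plan is to combine the Lefschetz fixed point theorem (applied to each time-$t$ map separately) with a compactness argument to produce a common fixed point of the whole semiflow.

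First, transport the problem via the given homeomorphism so that we may assume $Z = S^{2n}$. For each fixed $t \geq 0$, the map $\Phi^t \colon Z \to Z$ is continuous and homotopic to the identity via the explicit homotopy $H(s, z) \coloneq \Phi^{st}(z)$, $s \in [0,1]$, which is continuous by the joint continuity of the semiflow and interpolates between $\Phi^0 = \mathrm{id}_Z$ and $\Phi^t$. Since $S^{2n}$ is a compact ANR, the Lefschetz number of $\Phi^t$ equals that of the identity, namely $\chi(S^{2n}) = 1 + (-1)^{2n} = 2 \neq 0$, so the Lefschetz fixed point theorem yields a fixed point of $\Phi^t$ for every $t \geq 0$. (Equivalently: a self-map of $S^{2n}$ without fixed points is homotopic to the antipodal map, which has degree $(-1)^{2n+1}=-1$, contradicting $\deg \Phi^t = 1$.)

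Next, define $F_t \coloneq \{z \in Z : \Phi^t z = z\}$, which is nonempty by the previous paragraph and closed by continuity. If $m$ divides $n$ then $F_{1/n} \subseteq F_{1/m}$, because $\Phi^{1/n} z = z$ implies $\Phi^{k/n} z = z$ for every positive integer $k$ and in particular for $k = n/m$. Consequently the sequence of closed sets $(F_{1/n!})_{n \geq 1}$ is decreasing, and since $Z$ is compact their intersection $F^* \coloneq \bigcap_{n \geq 1} F_{1/n!}$ is nonempty.

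Finally, any $z \in F^*$ satisfies $\Phi^{k/n!} z = z$ for all integers $k \geq 0$ and $n \geq 1$. Since the set $\{k/n! : k \geq 0,\ n \geq 1\}$ is dense in $[0,\infty)$ and the orbit map $t \mapsto \Phi^t z$ is continuous, it follows that $\Phi^t z = z$ for every $t \geq 0$, providing the required fixed point of the semiflow. The only genuinely non-elementary step is the Lefschetz input in the first paragraph; the remainder is a routine nested-compactness-plus-density argument, and no obstruction arises from the fact that $Z$ is merely a topological (not smooth) sphere.
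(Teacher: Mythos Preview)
Your proof is correct and follows essentially the same strategy as the paper's: obtain a fixed point for each time-$t$ map on $S^{2n}$, intersect a nested family of fixed-point sets, and pass to all $t$ by density. The only variation is in the first step: you invoke the Lefschetz number via the homotopy $s\mapsto\Phi^{st}$ to the identity (which yields a fixed point for \emph{every} $t$), whereas the paper uses the antipodal-or-fixed dichotomy together with the uniform convergence $\Phi^{2^{-k}}\to\mathrm{id}$ to rule out antipodal images for large $k$; the nested-compactness and density arguments (your $1/n!$ versus the paper's $2^{-k}$) are otherwise interchangeable.
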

\begin{proof}
We suppose without loss of generality that $Z=S^{2n}$. It is well known (see for example \cite[p.~367]{Mu00}) that a continuous function from $S^{2n}$ to itself either has a fixed point, or maps some point $x \in S^{2n}$ to its antipode $-x$. Since $(\Phi^t)$ is a semiflow we have $\lim_{k \to \infty} \sup_{x \in S^{2n}} \|x-\Phi^{2^{-k}}x\|=0$, so in particular if $k$ is sufficiently large then $\Phi^{2^{-k}}x \neq -x$ for every $x \in S^{2n}$. For every large enough $k$ the compact set $X_k\coloneq \{x \in S^{2n} \colon \Phi^{2^{-k}}x=x\}$ is thus nonempty. The nested intersection $\bigcap_{k=1}^\infty X_k$ is consequently nonempty and its elements are fixed points of the semiflow.
\end{proof}

\begin{proof}[Proof of Theorem~\ref{th:bara}]
Let $Z\subset \R^d$ denote the unit sphere of $\threebar{\cdot}$. Since $\threebar{\cdot}$ is a Barabanov norm, every point in $Z$ is the origin of at least one trajectory which takes values only in $Z$. Suppose for a contradiction that every point in $Z$ is the origin of a \emph{unique} such trajectory. Let $\mathfrak{X}_Z$ denote the set of all trajectories which take only values in $Z$, and observe that  by Proposition~\ref{pr:aff} $\mathfrak{X}_Z$ is a switched Bebutov shift. The function $\mathfrak{X}_Z \to S^{d-1}$ defined by $x \mapsto \|x(0)\|^{-1}x(0)$ is clearly continuous, and is bijective by hypothesis. It is therefore a homeomorphism. By Lemma~\ref{le:fixed-sphere} the shift semiflow $(\sigma^t)_{t \geq 0}$ on $\mathfrak{X}_Z$ has a fixed point $x$, and this trajectory satisfies $0=\dot{x}(t) = \sum_{i=1}^N u_i(t) A_i x(t)\text{ a.e.}$ for some measurable functions $u_1,\ldots,u_N \colon [0,\infty) \to [0,1]$ such that $\sum_{i=1}^Nu_i(t)=1$ a.e. In particular there exists a convex combination of the matrices $A_1,\ldots,A_N$ which has $x(0)$ in its kernel, which is a contradiction.\end{proof}

\section{Generalisations and directions for future work}

In this work we have considered only the case of arbitrary switching, in which the components $u_i$ of the switching law are Lebesgue measurable functions $[0,\infty) \to [0,1]$. However, Question~\ref{qu:siamreview} also makes sense when interpreted in terms of weak\nobreakdashes-* closed subclasses of those switching laws, such as the class of piecewise constant functions with a fixed minimum dwell time. In this context, each $u_i$ is instead a function $[0,\infty) \to \{0,1\}$; we again have $\sum_{i=1}^N u_i=1$ a.e.; and the vector $(u_1(t),\ldots,u_N(t))$ may take at most two distinct values in each interval of length $\delta$, where the minimum dwell time $\delta>0$ is some arbitrary constant. In this context segments of switching laws cannot be arbitrarily concatenated since this in general will violate the dwell time constraint, and thus trajectories do not necessarily satisfy the switching axiom of Definition \ref{de:switched}. While some of the preceding proofs can be adapted to this situation with minor modifications (such as Proposition~\ref{pr:minimal}) many other key steps break down, such as Proposition~\ref{pr:aff} and the final steps of Theorem~\ref{th:homog}. At this time we do not offer a conjecture on whether Question~\ref{qu:siamreview} has a positive answer for three-dimensional linear switched systems in the context of piecewise constant switching laws with a minimum dwell time, nor on whether a negative answer to the same question continues to hold in four dimensions and higher.

For simplicity of exposition we have restricted our attention in section~\ref{se:app} to the case in which the switched system is defined by a finite collection of states as opposed to a compact convex set of states. However, these results extend to this more general situation without deep modifications.

While we have shown in Theorem \ref{th:linear} that every marginally stable linear switched system in three dimensions has a periodic trajectory, we as yet know nothing of the structure of the underlying switching law. This contrasts with the two-dimensional case in which the structure of the periodic switching law is known very precisely: in this situation the switching law is either constant (in which case some convex combination of the switching states is non-invertible) or takes only extreme values $u_i(t) \in \{0,1\}$ and takes these values only on intervals. We therefore ask the following very general question:

\begin{question}
Let $\A=(A_1,\ldots,A_N) \in M_d(\R)^N$ be marginally stable with Barabanov norm $\threebar{\cdot}$. Suppose that there is no tuple $\B=(B_1,\ldots,B_M) \in M_d(\R)^M$ whose convex hull is strictly contained in that of $\A$ but which is also marginally stable. Does there exist a trajectory $x(t)$ along which $\threebar{x(t)}$ is constant and for which the underlying switching law $u=(u_1,\ldots,u_n)$ almost everywhere takes only values in $\{0,1\}^N$?
\end{question}

It follows from the results of e.g. \cite{PrMu25,PyRa96} that the answer to this question is positive in dimension $2$, and the answer is trivially positive in dimension $1$.

For questions specific to Bebutov shifts and the contents of Section~\ref{se:core}, see \S\ref{ss:Bebutov-questions}.

\bigskip

\noindent \textbf{Acknowledgements.} 
This research was partially supported by Leverhulme Trust Research Project Grant RPG-2024-380.
I.D.~Morris thanks the Pennsylvania State University for its hospitality during the visit in which this project was initiated.  The authors also thank Vaughn Climenhaga and Federico Rodriguez Hertz for helpful conversations.

\bibliographystyle{acm}
\bibliography{periodic} 
\end{document}